\documentclass{article}

\usepackage{amsfonts}
\usepackage{amsmath}
\usepackage{a4wide}
\usepackage{amssymb}
\usepackage{hyperref}
\usepackage{tikz-cd}
\usetikzlibrary{cd}

\newtheorem{lem}{Lemma}[section]
\newtheorem{theo}[lem]{Theorem}
\newtheorem{coro}[lem]{Corollary}
\newtheorem{propo}[lem]{Proposition}
\newtheorem{rema}[lem]{Remark}
\newtheorem{defi}[lem]{Definition}

\newenvironment{proof}{\paragraph*{Proof}}
{\par}

\newcommand\GL{{\mathrm{GL}}}
\newcommand\SL{{\mathrm{SL}}}

\newcommand\eps\varepsilon
\newcommand\ph\varphi
\newcommand\C{{\mathbb C}}
\newcommand\F{{\mathbb F}}
\newcommand\Q{{\mathbb Q}}

\newcommand\PPP{{\mathbb P}}

\newcommand\Z{{\mathbb Z}}

\newcommand\E{{\mathbb E}}

\newcommand{\univ}{\mathrm{univ}}

%in order to prevent formulas in text to be broken: 
\relpenalty=10000
\binoppenalty=10000

\title{Semistable reduction of modular curves associated with maximal subgroups in prime level}

\author{B. Edixhoven and P. Parent}

\setcounter{tocdepth}4

\hfuzz=3pt

\begin{document}

\maketitle 
\begin{abstract}
  We complete the description of semistable models for modular curves
  associated with maximal subgroups of $\GL_2 (\F_p )$ (for $p$ any
  prime, $p>5$). That is, in the new cases of non-split Cartan modular
  curves and exceptional subgroups, we identify the irreducible
  components and singularities of the reduction mod~$p$, and the
  complete local rings at the singularities. We review the case of
  split Cartan modular curves. This description suffices for computing
  the group of connected components of the fibre at~$p$ of the N\'eron
  model of the Jacobian. 

\medskip

AMS 2010 Mathematics Subject Classification  11G18 (primary), 11G20, 14G35 (secondary). 
\end{abstract}

\tableofcontents

\section{Introduction}
\label{Introduction}

Let $p$ be a prime number. The picture given in Figure~\ref{figureX_0}
%%%%%%%%%%%%%%%%%%%%%%%%%%%%%%%%%%%%%%%%%%%%%%%
%%%%%%%%%%%%%%%%%%%%%%%%%%%%%%%%%%%%%%%%%%%%%%%%
%%%%%%%%%%%%%%%%%%%%%%%%%%%%%%%%%%%%%%%%%%%
\begin{figure}
\begin{center}
\begin{picture}(320,200)(-30,-80)

%%%%%%%%%%%%%%%%%%%%%%%%%%

\put(128,149){$w_p$}

\qbezier(120,140)(130,145)(140,140)
\put(120,140){\vector(-2,-1){4}} 
\put(140,140){\vector(2,-1){4}}

%%%%%%%%%%%%%%%%%%%%%%%%%%

\qbezier(90,135)(165,110)(130,90)
\qbezier(130,90)(100,75)(130,60)
\qbezier(130,60)(170,45)(130,30)
\qbezier(130,30)(120,25)(115,20)

\qbezier(170,135)(95,110)(130,90)
\qbezier(130,90)(160,75)(130,60)
\qbezier(130,60)(100,45)(130,30)
\qbezier(130,30)(140,25)(145,20)

%\qbezier(40,105)(35,0)(55,-70) 
%   \put(47,-90){${\mathrm{Ig}} (p)_d$}

%%%%%%%%%%%%%%%%%%%%%%

\qbezier(130,-25)(120,-20)(115,-15)
\qbezier(90,-70)(165,-45)(130,-25)

\qbezier(130,-25)(140,-20)(145,-15)
\qbezier(170,-70)(95,-45)(130,-25)

\put(75,-85){$\PPP^1$}
\put(176,-85){$\PPP^1$}

%%%%%%%%%%%%%%%%%%%%%%%%%%%

\put(147,10){$\cdot$}
\put(147,2){$\cdot$}
\put(147,-6){$\cdot$}

\put(113,10){$\cdot$}
\put(113,2){$\cdot$}
\put(113,-6){$\cdot$}

%%%%%%%%%%%%%%%%%%%%%%

\put(40,3){$g(X_0 (p)) +1$}

\put(90,15){\vector(1,2){18}} 
\put(90,13){\vector(2,1){19}}
\put(90,0){\vector(1,-1){20}}

\put(90,24){\vector(1,3){24}}

\put(109,63){$\cdot$}
\put(109,60){$\cdot$}
\put(109,57){$\cdot$}

%%%%%%%%%%%%%%%%%%%%%%%%%%

%%%%%%%%%%%%%%%%%%%%%
 
%%%%%%%%%%%%%%%%%%%%%
  
%%%%%%%%%%%%%%%%%%%%%%%%%%

%%%%%%%%%%%%%%%%%%%%%%%%%%
 
%\
\end{picture}
\end{center}
\caption{${X}_0 (p)_{/ \overline{\F}_p}$}
\label{figureX_0}
\end{figure}
of the geometric special fiber of the stable model of $X_0 (p)$ over
$\Z_p$ now looks familiar to many number-theorists\footnote{To be
  completely correct, when $p\leq 19$ that model is only
  semistable.}. It has been described in the work~\cite{DR73} of
Deligne and Rapoport, and was actually known, in a slightly different
guise, by Kronecker. Having such a model at hand has proven crucial in many
questions -- not only for direct applications such as the computation
of semistable N\'eron models of the jacobian $J_0 (p)$ but also in
diophantine issues, such as the determination of the non-cuspidal
rational points of $X_0 (p)$ in Mazur's famous works \cite{Ma76}
and~\cite{Ma78}.

It is actually under similar motivations that we describe here a
semistable model, over a suitable extension of~$\Z_p$, of the modular
curve~$X_{\mathrm{ns}}^+ (p)$ attached to the normaliser of a
non-split Cartan subgroup in~$\GL_2(\F_p)$. Recently indeed
J.~Balakrishnan and her coauthors managed to elaborate on the
Chabauty-Kim method and prove that the modular curve
$X_{\mathrm{ns}}^+ (13)$ had only the expected trivial rational points
(see~\cite{BM17}). That constituted a tour-de-force, as the latter
curve had so far resisted all known methods on Earth. Their strategy
needs at some point a bit of knowledge of the reduction type of the
curve under study, and that knowledge was available because
$X_{\mathrm{ns}}^+ (13)$ is isomorphic to $X_{\mathrm{s}}^+ (13)$,
attached to the normaliser of a split Cartan, see~\cite{Ba14}, and for
that latter curve the necessary information was already available
from~\cite{Edix89}. For $p>13$, there is no isomorphism between split
and non-split Cartan curves, so our models for $X_{\mathrm{ns}}^+ (p)$
shall prove necessary for applying the quadratic Chabauty method
of~\cite{BM17} to the latter curves.

A bit more generally, we describe stable models of modular curves
associated with all maximal subgroups of~$\GL_2 (\F_p )$. One
classically knows (see e.g. \cite{Ma76}) that those subgroups (up to
conjugation) are the Borel, the normalizer of split and non-split
Cartan (defining the curves denoted by $X_{\mathrm{s}}^+ (p)$ and
$X_{\mathrm{ns}}^+ (p)$ respectively) and some exceptional subgroups,
which are lifts of the permutation groups~$\mathfrak{A}_4$,
$\mathfrak{S}_4$ or~$\mathfrak{A}_5$ in ${\mathrm{PGL}}_2 (\F_p
)$. Among those, note that the curve $X_{\mathrm{s}}^+ (p)$ is isomorphic to $X_0 (p^2 )/w_{p^2}$,
and that the case $X_0 (p^2 )$ had already been treated in the
article~\cite{Edix89} by the first-named author of the present work
(see also \cite{Edix89b} and \cite{Edix01}). Nevertheless we do our own computations in Section~\ref{SplitCartan}
below, and we treat the case of $X_{\mathrm{s}}^+ (p)$.

The newest part of the present study however is a complete description of fibre at~$p$ of
the stable model for the non-split Cartan curve
$X_{\mathrm{ns}}^+ (p)$ and the thickness of its singularities
(cf. Section~\ref{NonSplitCartan}). (Recall the {\it thickness} of a semi-stable curve over a complete discrete 
valuation ring~$R$ with uniformiser~$\pi$ and with separably closed residue field~$k$, at a singular 
point of the special fibre, is the unique natural number $n\geq 1$ such that the completed local ring 
is isomorphic to $R[[x,y]]/(xy-\pi^n)$ (cf. Definition~10.3.23 of~\cite{Liu02}). That $n$ is equal to $1$ plus the number of projective lines over $k$ that 
appear in the minimal resolution of the singularity. In the terminology of rigid geometry, the meaning of $n$ is 
that the tube of the singular point is the open annulus with inner radius $|\pi|^n$ and outer radius~$|1|$.)

   The case of exceptional groups is
probably of lesser interest. From the diophantine point of view, for
instance, Serre remarked that a simple argument on the action of inertia at $p$ 
in the mod~$p$ Galois representations attached to elliptic curves shows that
the modular curves associated with exceptional groups have no local
points with values in (not too ramified) $p$-adic fields, as soon as $p$ is large
enough, cf.~\cite{Ma77}, p.~118.  We however compute semistable
models for those modular curves in Section~\ref{Exceptional}.

Our method is first to describe stable models for the curves
${\overline{\cal M}} (\Gamma (p), {\cal P})$ associated with the full
level-$p$ structure, enhanced by some additional (finite, \'etale,
representable) moduli problem~${\cal P}$ over $\Z_p$. This is what we
do in Section~\ref{X(p)}, essentially following the
unpublished~\cite{Edix01}. We then take quotients by relevant
subgroups of $\GL_2 (\F_p )$, starting with the normalizer of
non-split Cartan. The fact that we added a level structure $\cal
P$ allows us to keep working with a fine moduli space. Finally we
assume $\cal P$ is Galois with group $G$, and taking the quotient by
$G$ yields a stable model for the coarse curve $X_{\mathrm{ns}}^+
(p)$.  We repeat that process for the split-Cartan curve and the exceptional subgroups.

We must mention that this approach is not well-suited to deal
with cases of level divisible by powers $p^r$ of $p$, when $r\geq 2$,
because of algebro-geometric reasons recalled in
Remark~\ref{higerp^n}. In that situation, probably, one can apply  
J.~Weinstein's results (see~\cite{Wein16}). It is however not clear to us
if those techniques will provide the thicknesses of the singularities of
the stable models, and how difficult it would be to find the graphs.

\medskip

A last word about stability versus semistability: as for the model of
$X_0 (p)$ recalled in Figure~\ref{figureX_0}, our semistable models
will actually be stable, for large enough $p$, in many cases but not
all.  The curves $X_{\mathrm{s}}^+ (p)$ and $X_{\mathrm{ns}}^+ (p)$,
for any $p$ which is $-1$ mod $4$, are indeed not stable, as explained
in Theorems~\ref{NS0+}, \ref{S0+}, and Remark~\ref{seulementsemi}. In
all cases however it is easy to spot what projective lines need to be
contracted in order to obtain a stable model. About that issue, see
Remark~\ref{level13}.

%%%%%%%%%%%%%%%%%%%%%%%%%%%%%%%%%%%%%%%%%%%%%%%
%%%%%%%%%%%%%%%%%%%%%%%%%%%%%%%%%%%%%%%%%%%

\section{Stable model for full level $p$ structure}
%\section{Stable model for $\overline{\cal M} ({\cal P} ,\Gamma (p))$}
\label{X(p)}

\subsection{Katz-Mazur's model $\overline{{\cal M}} ({\cal P}, \Gamma (p))$}

Our starting point will be the modular model over~$\Z [\zeta_p ]$,  as given by Katz and Mazur
(\cite{KM85}; Chapter~13), for modular curves with full level $p$
structure plus some additional level structure $\cal P$ with nice
properties at~$p$. Let us very briefly recall Katz-Mazur
``Drinfeldian approach'' to moduli problems. We will not discuss
stable models for the curves $X(p)_\Q$ with no additional level
structure. 
% But on demand, we could!!

\medskip

We let $\cal P$ be a representable finite \'etale moduli problem over
$(\mathrm{Ell})_{\Z_p}$. One can take for instance ${\cal
  P}=[\Gamma_1(N)]$ for $N\geq 4$ a prime-to-$p$ integer. Later, when
we want to get rid of~$\cal P$, we
will assume moreover that $\cal P$ is Galois over~$(\mathrm{Ell})_{\Z_p}$.

\medskip

There was a time when for $N$ any positive integer, $\Gamma (N)$
denoted the kernel of the reduction morphism $\SL_2(\Z)\to
\SL_2(\Z/N\Z)$. But since~\cite{DR73} it became clear that it was
better to attach modular curves to compact open subgroups of the
finite ad\`ele group $\GL_2(\Q\otimes\hat{\Z})$. So, we let
$\Gamma(N)$ denote the kernel of the surjective morphism
$\GL_2(\hat{\Z})\to \GL_2(\Z/N\Z)$. Following~\cite{KM85}, if $E/S$ is
an elliptic curve over an arbitrary scheme~$S$, we say that a group
morphism $\phi \colon (\Z /N \Z)^2 \to E(S)$ is a {\it $\Gamma
  (N)$-structure} (or {\it ``full level-$N$ structure''}) if the
effective Cartier divisor
$$
D_N :=\sum_{a\in (\Z /N \Z)^2} [\phi (a)]
$$
is a group scheme which is equal to $E[N]$. The ordered pair $(P:=\phi
(1,0), Q:=\phi (0,1))$ is then said to be a {\it Drinfeld basis}
of~$E[N]$. The set of $\Gamma(N)$-structures on $E/S$ is
denoted~$[\Gamma(N)](E/S)$.

\medskip

  Of course when $N$ is invertible on $S$, this notion of level-$N$ structure brings nothing new to the naive usual definition. On the other hand, over a field $k$ of positive characteristic $p$, 
a Drinfeld basis of $E[p]$ is easily seen to be a pair $(P,Q)$ such that at least one of the two points has order $p$, in the usual sense, in $E(k)$, at least if $E$ is ordinary (and  the only possible
$(0,0)$ if $E$ is supersingular). 

\medskip

Let us fix from now on some prime number $p$. If $S$ is an $\F_p$-scheme, and $n$ any non-negative integer, let $F_{E/S}^n \colon E \to E^{(p^n )}$ denote the $n^{\mathrm{th}}$-power
of the relative Frobenius, and $V_{E/S}^n \colon E^{(p^n )} \to E$ the $n^{\mathrm{th}}$-power of the Verschiebung, that is, the dual isogeny to $F_{E/S}^n$. 

One knows that $F_{E/S}^n$ is purely radicial, and $V_{E/S}^n$ is
\'etale exactly when $E/S$ is ordinary. In any case both isogenies are
cyclic with order $p^n$, that is, after a suitable surjective finite
locally free base change, there is a group morphism $\phi \colon
(\Z/p^n \Z )\to E(S)$ such that their kernel is equal, as effective
Cartier divisor, to $\sum_{a\in (\Z /p^n \Z)} [\phi (a)]$. 
%% `\'etale locally' was not good enough, around supersingular points:
%% the Igusa curves ramify, there.
An {\it
  Igusa structure of level $p^n$} on $E/S$ is the datum of some point
$P$ in $E^{(p^n )}[p^n](S)$ such that the equality
$$
{\mathrm{Ker}} (V_{E/S}^n )=\sum_{a\in (\Z /p^n \Z)} [aP]
$$ 
between effective Cartier divisors holds. The associated moduli
problem on the elliptic stack $({\mathrm{Ell}} )_{\F_p}$ is denoted by
$[{\mathrm{Ig}} (p^n )]$. Igusa proved that $[{\mathrm{Ig}} (p^n )]$
is relatively representable: there is a complete smooth curve $\overline{{\cal M}}
({\cal P}, [{\mathrm{Ig}} (p^n )])$ over $\F_p$ 
such that the complement of the cusps ${\cal M}({\cal P},
[{\mathrm{Ig}} (p^n )])$ represents $({\cal P}, [{\mathrm{Ig}} (p^n)])$.  
% (Bas) I removed ` up to isomorphisms' as it makes no sense to me. 
To state Katz-Mazur's central result
in the simplest way, we shall actually work with so-called {\it exotic Igusa structures}. So - restricting ourselves to level $p$ - we define the moduli problem:
$$
(E/S/\F_p) \mapsto 
[{\mathrm{ExIg}} (p,1 )](E/S) :=\{ P\in E(S), (0,P) {\mathrm{\ is\ a\ Drinfeld\ }} p{\mathrm{-basis\ of\ }} E/S  \}
$$
and we then can check there is an (exotic) isomorphism 
\[
[{\mathrm{Ig}} (p )] \stackrel{\sim}{\longrightarrow}[{\mathrm{ExIg}}(p,1 )]
\,,\quad
(E/S, P\in [{\mathrm{Ig}} (p )](E/S)) \mapsto (E^{(p)}/S,(0,P))\,.
\]

\medskip

The moduli problem $({\cal P},[\Gamma (p)])$ classifies triples
$(E/S,\alpha,\phi)$ for $S$ a $\Z_p$-scheme, $E/S$ an elliptic curve,
$\alpha\in{\cal P} (E/S)$, and $\phi\in[\Gamma (p)](E/S)$.
Katz-Mazur's theorems about $\Gamma(p)$-structures (\cite{KM85},
Theorems~3.6.0, 5.1.1 and 10.9.1) then assert that
$({\cal P},[\Gamma (p)])$ is representable by a regular $\Z_p$-scheme ${\cal
  M} ({\cal P} ,[\Gamma (p)])$, that has a compactification
$\overline{{\cal M}} ({\cal P} ,[\Gamma (p)])$ which enjoys the
following properties. Weil's pairing $e_p (\cdot ,\cdot )$ shows that
the morphism $\overline{\cal M} ({\cal P} ,[\Gamma (p)]) \to
{\mathrm{Spec}} (\Z_p)$ factorizes through ${\mathrm{Spec}}
(\Z_p[\zeta_p])$, with
$\Z_p[\zeta_p]:=\Z_p[x]/(x^{p-1}+\cdots+x+1)$. For all integers $i$
in $\{ 1,\dots ,p-1 \}$, set 
\[
X_i :=\overline{\cal M} ({\cal P} ,[\Gamma (p)^{\zeta_p^i{\mathrm{-can}} } ])
\]
for the sub-moduli problem over $({\mathrm{Ell}} )_{\Z_p[\zeta_p ]}$
representing triples $(E/S/\Z_p[\zeta_p],\alpha, \phi)$ such that
\[
e_p (\phi (1,0) ,\phi (0,1)) =\zeta_p^i\,.
\]  
The obvious morphism:
$$
\coprod_i X_i \to \overline{{\cal M}} ({\cal P} ,[\Gamma (p)])_{\Z_p[\zeta_p ]}
$$
induces, by normalization, an isomorphism of schemes over
$\Z_p[\zeta_p]$:
\begin{eqnarray}
\label{X_i}
\coprod_i X_i \stackrel{\sim}{\longrightarrow} 
\overline{{\cal M}} ({\cal P} ,[\Gamma (p)])^\sim_{\Z_p[\zeta_p ]}\,,
\end{eqnarray}
with $\overline{{\cal M}} ({\cal P} ,[\Gamma (p)])^\sim_{\Z_p[\zeta_p
  ]}\to \overline{{\cal M}} ({\cal P} ,[\Gamma (p)])_{\Z_p[\zeta_p ]}$
the normalization. The triviality of $p^{\mathrm{th}}$-roots  
of unity in characteristic~$p$ shows that, after the base change
$\Z_p[\zeta_p]\to\F_p$, the $X_{i,\F_p}$ are not only isomorphic to
each other but actually equal. Moreover, the modular interpretation of
a $\Gamma (p)$-structure $\phi \colon (\Z /p\Z )^2 \to E(k)$, in the
generic case of an ordinary elliptic curve $E$ over a field $k$ of
characteristic $p$, amounts to choosing some line $L$ in $(\Z /p\Z
)^2$ that plays the role of ${\mathrm{Ker}} (\phi )$, then some point
$P$ in $E(k)$ which defines the induced isomorphism $(\Z /p\Z )^2 /L
\stackrel{\sim}{\longrightarrow} E[p](k)$.  Making that into a proof,
Katz and Mazur give the following theorem.
\begin{theo}
\label{KatzMazur1}
{\bf (Katz-Mazur \cite{KM85}, 13.7.6).} Each curve $X_{i,\F_p}$
obtained from $X_i$ over $\Z_p[\zeta_p]$ via $\Z_p[\zeta_p]\to\F_p$, is
the disjoint union, with crossings at the supersingular points, of
$p+1$ copies of the $\overline{\cal M} ({\cal P})$-schemes
$\overline{\cal M}({\cal P}, [{\mathrm{ExIg}} (p,1 )])$
(cf. Figure~\ref{FigureX_i}). 
We label those Igusa schemes as
${\mathrm{Ig}}_{i,L}$ for $(i,L)$ running through $\F_p^\times \times
\PPP^1 (\F_p )$.
\end{theo}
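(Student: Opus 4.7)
The plan is to understand $\Gamma(p)$-structures on elliptic curves in characteristic~$p$ via the connected-\'etale sequence of $E[p]$. Since $\zeta_p -1$ is a uniformiser of $\Z_p[\zeta_p]$, the congruence $\zeta_p^i\equiv 1\pmod{p}$ shows that the Weil-pairing condition cutting out $X_i$ becomes vacuous on the special fibre, so each $X_{i,\F_p}$ carries the same modular interpretation; it suffices to describe it once, and the label $i$ will just be inherited from the ambient connected component of $\overline{\cal M}({\cal P},[\Gamma(p)])_{\Z_p[\zeta_p]}$.

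First I would treat the ordinary locus. For an ordinary $E/S$ over an $\F_p$-scheme, $E[p]$ fits into
\[
0 \longrightarrow \ker F_{E/S} \longrightarrow E[p] \longrightarrow E^{(p)}[V_{E/S}] \longrightarrow 0,
\]
with $\ker F_{E/S}$ connected of rank $p$ and $E^{(p)}[V_{E/S}]$ \'etale of rank~$p$. Given $\phi\in[\Gamma(p)](E/S)$, the Katz--Mazur identity $\sum_{a\in(\Z/p\Z)^2}[\phi(a)]=E[p]$ as Cartier divisors forces exactly $p$ of the $\phi(a)$ to land in $\ker F_{E/S}$: the ``bad'' indices form a line $L\subset(\Z/p\Z)^2$ (equivalently, $\phi$ induces an isomorphism $(\Z/p\Z)^2/L\stackrel{\sim}{\to}E^{(p)}[V_{E/S}]$). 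Picking a generator of $L$ produces a point of $\ker F_{E/S}$, which via the exotic isomorphism $[\mathrm{Ig}(p)]\stackrel{\sim}{\to}[\mathrm{ExIg}(p,1)]$ is precisely an Igusa/exotic-Igusa structure. This gives a natural morphism from $\coprod_{L\in\PPP^1(\F_p)}\overline{\cal M}({\cal P},[\mathrm{ExIg}(p,1)])$ restricted to the ordinary locus, into $X_{i,\F_p}$, and I would check it is an isomorphism there by constructing the inverse from $(L,\psi)\mapsto\phi$ explicitly.

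Next I would extend to supersingular points. At a geometric supersingular point, $E[p]$ is infinitesimal, its \'etale quotient is trivial, and the choice of line $L$ is no longer seen on closed points: all $p+1$ candidate components must meet there. The technical work is to show that they meet with ordinary crossings. I would proceed via the Serre--Tate deformation theory of the $p$-divisible group $E[p^\infty]$, reducing to Drinfeld's formal description of $\Gamma(p)$-structures over the universal deformation ring: one writes down the $p+1$ lines $L$ as distinct closed formal subschemes of $\overline{\cal M}({\cal P},[\Gamma(p)])^\wedge$ at the supersingular point, each smooth (each is the Igusa formal branch), and checks that their union is the whole completion. This step, which is essentially the content of Katz--Mazur~13.7.6, is the main obstacle, as it requires handling the Cartier-divisor condition over a non-reduced base.

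Finally I would assemble the two analyses: the ordinary part shows that the normalisation of $X_{i,\F_p}$ is the claimed disjoint union, and the supersingular local analysis identifies exactly how the $p+1$ smooth sheets are glued along the supersingular locus. Passing from $X_{i,\F_p}$ to $\coprod_i X_{i,\F_p}$ via (\ref{X_i}) then produces the labels $\mathrm{Ig}_{i,L}$ with $(i,L)\in\F_p^\times\times\PPP^1(\F_p)$ as asserted.
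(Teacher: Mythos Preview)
Your outline matches the paper's treatment: the paper does not prove this theorem but cites it from Katz--Mazur, offering only the one-sentence heuristic that on the ordinary locus a $\Gamma(p)$-structure amounts to the choice of a line $L\subset(\Z/p\Z)^2$ playing the role of $\ker\phi$, together with the induced isomorphism $(\Z/p\Z)^2/L\stackrel{\sim}{\to}E[p](k)$. Your connected--\'etale analysis and the passage to supersingular points via deformation theory are exactly the shape of the Katz--Mazur argument the paper is invoking.

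One small correction: the Igusa datum is carried by the \emph{quotient} $(\Z/p\Z)^2/L$, not by a generator of~$L$. A generator of $L$ maps into $\ker F_{E/S}$, which over a field has only the trivial $k$-point, so it contributes nothing; the exotic Igusa point $P$ is the image of a generator of $(\Z/p\Z)^2/L$ in the \'etale quotient $E[p]/\ker F$. With that adjustment your sketch is fine.
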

%

%%%%%%%%%%%%%%%%%%%%%%%%%%%%%%%%%%%%%%%%%%%%%%%
%%%%%%%%%%%%%%%%%%%%%%%%%%%%%%%%%%%%%%%%%%%%%%%%
%%%%%%%%%%%%%%%%%%%%%%%%%%%%%%%%%%%%%%%%%%%

\begin{rema}
\label{semistableouquoi}
{\rm One would like to think of the copies of the scheme $\overline{\cal M}({\cal P}, [{\mathrm{ExIg}} (p,1 )])$ as the ``components''
of $X_{i,\F_p}$, which is morally true - note however that they may not be geometrically irreducible (being such exactly when $\overline{\cal M}({\cal P} )$ is).
The same administrative issue will show up in our subsequent models. It of course vanishes when we eventually get
rid of the auxiliary level-${\cal P}$ structure, as in the coarse curves $X_{\mathrm{ns}} (p)$, $X_{\mathrm{ns}}^+ (p)$, and so on below.}
\end{rema}

\begin{figure}
\begin{center}
\begin{picture}(170,200)(50,-80)

%%%%%%%%%%%%%%%%%%%%%%%%%%

\put(52,159){$p+1$ (vertical) copies of}
\put(61,147){$\overline{\cal M}({\cal P}, [{\mathrm{ExIg}} (p,1 )])$}

\put(40,150){\vector(-2,-1){20}}

\put(175,155){\vector(3,-2){20}}

%%%%%%%%%%%%%%%%%%%%%%%%%%
 
\qbezier(-30,135)(45,110)(10,90)
\qbezier(10,90)(-20,75)(10,60)
\qbezier(10,60)(40,45)(10,30)
\qbezier(10,30)(0,25)(-5,20)

\qbezier(50,135)(-25,110)(10,90)
\qbezier(10,90)(40,75)(10,60)
\qbezier(10,60)(-20,45)(10,30)
\qbezier(10,30)(20,25)(25,20)

\put(0,130){$\cdot$}
\put(9,130){$\cdot$}
\put(18,130){$\cdot$}

\put(4,101){$\cdots$}

\put(5,74){$\cdots$}

\put(5,44){$\cdots$}

%%%%%%%%%%%%%%%%%%%%%%%%%%
 
\qbezier(-30,130)(55,110)(10,90)
\qbezier(10,90)(-30,75)(10,60)
\qbezier(10,60)(50,45)(10,30)
\qbezier(10,30)(-2,25)(-9,20)

\qbezier(50,130)(-35,110)(10,90)
\qbezier(10,90)(50,75)(10,60)
\qbezier(10,60)(-30,45)(10,30)
\qbezier(10,30)(22,25)(29,20)

%%%%%%%%%%%%%%%%%%%%%%%%%%

\qbezier(-30,145)(36,110)(10,90)
\qbezier(10,90)(-10,75)(10,60)
\qbezier(10,60)(30,45)(10,30)
\qbezier(10,30)(2,25)(-1,20)

\qbezier(50,145)(-16,110)(10,90)
\qbezier(10,90)(30,75)(10,60)
\qbezier(10,60)(-10,45)(10,30)
\qbezier(10,30)(18,25)(21,20)

%%%%%%%%%%%%%%%%%%%%%%%%%%

%%%%%%%%%%%%%%%%%%%%%%
%%%%%%%%%%%%%%%%%%%%%%
%%%%%%%%%%%%%%%%%%%%%%

\qbezier(10,-25)(0,-20)(-5,-15)
\qbezier(-30,-70)(45,-45)(10,-25)

\qbezier(10,-25)(20,-20)(25,-15)
\qbezier(50,-70)(-25,-45)(10,-25)

%%%%%%%%%%%%%%%%%%%%%%%%%%%

\qbezier(10,-25)(-2,-20)(-9,-15)
\qbezier(-30,-65)(55,-45)(10,-25)

\qbezier(10,-25)(22,-20)(29,-15)
\qbezier(50,-65)(-35,-45)(10,-25)

%%%%%%%%%%%%%%%%%%%%%%%%%%%

\qbezier(10,-25)(2,-20)(-1,-15)
\qbezier(-30,-78)(37,-45)(10,-25)

\qbezier(10,-25)(18,-20)(21,-15)
\qbezier(50,-75)(-15,-45)(10,-25)

\put(5,-40){$\cdots$}

\put(0,-70){$\cdot$}
\put(9,-70){$\cdot$}
\put(18,-70){$\cdot$}

%%%%%%%%%%%%%%%%%%%%%%%%%%%
%%%%%%%%%%%%%%%%%%%%%%

\put(29,10){$\cdot$}
\put(29,2){$\cdot$}
\put(29,-6){$\cdot$}

\put(-12,10){$\cdot$}
\put(-12,2){$\cdot$}
\put(-12,-6){$\cdot$}

%%%%%%%%%%%%%%%%%%%%%%

\put(-70,3){supersingular points}

\put(-40,15){\vector(1,2){18}} 
\put(-40,13){\vector(2,1){19}}
\put(-40,0){\vector(1,-1){20}}

\put(-40,24){\vector(1,3){24}}

\put(-21,63){$\cdot$}
\put(-21,60){$\cdot$}
\put(-21,57){$\cdot$}

%%%%%%%%%%%%%%%%%%%%%%%%%%

%%%%%%%%%%%%%%%%%%%%%%%%%%%

%%%%%%%%%%%%%%%%%%%%%%%%%%%%
%%%%%%%%%%%%%%%%%%%%%%%%%%%%%
%%%%%%%%%%%%%%%%%%%%%%%%%%%%%
%%%%%%%%%%%%%%%%%%%%%%%%%%%%%
%%%%%%%%%%%%%%%%%%%%%%%%%%%%
%%%%%%%%%%%%%%%%%%%%%%%%%%%%

%\put(103,32)(215,32)(305,38){\vector(-1,0){16}}

\put(90,37){\vector(-1,0){39}}

%%%%%%%%%%%%%%%%%%%%%%%%%%

\qbezier(105,115)(215,110)(305,117)

\put(322,116){$D_{i,s}$}

\put(195,120){$\cdot$}
\put(215,119){$\cdot$}
\put(230,120){$\cdot$}

\qbezier(103,92)(215,90)(305,97)

\put(323,95){$D_{i,s}$}

\put(195,96){$\cdot$}
\put(215,96){$\cdot$}
\put(230,97){$\cdot$}

\qbezier(104,62)(215,60)(305,68)

%\put(322,65){$D_s$}
\put(195,66){$\cdot$}
\put(215,66){$\cdot$}
\put(230,67){$\cdot$}

\qbezier(103,32)(215,32)(305,38)

% \put(322,35){$D_s$}
\put(195,36){$\cdot$}
\put(215,36){$\cdot$}
\put(230,37){$\cdot$}

\put(323,67){$\cdot$}
\put(322,47){$\cdot$}
\put(321,27){$\cdot$}
\put(322,7){$\cdot$}
\put(323,-16){$\cdot$}

%%%%%%%%%%%%%%%%%%%%%%%%%%

\qbezier(103,-26)(215,-29)(305,-26)

\put(195,-22){$\cdot$}
\put(215,-23){$\cdot$}
\put(230,-23){$\cdot$}

\qbezier(104,-53)(215,-54)(305,-51)

\put(320,-53){$D_{i,s}$}
\put(195,-49){$\cdot$}
\put(215,-49){$\cdot$}
\put(230,-50){$\cdot$}

%%%%%%%%%%%%%%%%%%%%%%%%%%

\qbezier(130,143)(135,0)(125,-80)
% \put(105,-90){${\mathrm{Ig}} (p)_1$}

\put(122,13){$\cdot$}
\put(122,2){$\cdot$}
\put(121,-9){$\cdot$}

\qbezier(160,142)(165,0)(158,-82)
%\put(105,-90){${\mathrm{Ig}} (p)_1$}

\put(152,13){$\cdot$}
\put(152,2){$\cdot$}
\put(151,-9){$\cdot$}

\qbezier(180,145)(184,0)(182,-79) 
% \put(187,-90){${\mathrm{Ig}} (p)_d$}

\put(174,13){$\cdot$}
\put(174,2){$\cdot$}
\put(173,-9){$\cdot$}

%%%%%%%%%%%%%%%%%%%%%%

% \qbezier(60,105)(65,0)(45,-70)
%\put(105,-90){${\mathrm{Ig}} (p)_1$}

\qbezier(245,145)(242,0)(252,-80) 
%\put(187,-90){${\mathrm{Ig}} (p)_d$}

\put(249,13){$\cdot$}
\put(249,2){$\cdot$}
\put(250,-9){$\cdot$}

\qbezier(265,144)(261,0)(272,-80) 

\put(269,13){$\cdot$}
\put(269,2){$\cdot$}
\put(270,-9){$\cdot$}

\qbezier(285,145)(279,0)(292,-80)

\put(289,13){$\cdot$}
\put(289,2){$\cdot$}
\put(290,-9){$\cdot$}

%%%%%%%%%%%%%%%%%%%%%%

%%%%%%%%%%%%%%%%%%%%%

% \put(270,-35){$g(X_0 (p)) +1$} 
% \put(275,-50){copies}

% \put(265,-22){\vector(-1,3){20}}

% \put(265,-28){\vector(-1,2){17}}

% \put(265,-32){\vector(-2,-3){20}}

%%%%%%%%%%%%%%%%%%%%%

%\
\end{picture}
\end{center}
\caption{Katz-Mazur and stable fibers of ${X}_i$}
\label{FigureX_i}
\end{figure}

The situation at the supersingular points can be described as follows. 
Let $\mathrm{x}$ be a point of $X_{i,\F_p}$ whose underlying elliptic
curve $E_0$ is supersingular, and let $k$ be the residue field
of~$\mathrm{x}$. Then $\mathrm{x}$ is a triple $(E_0/k,\alpha_0,\phi_0)$
with $\alpha_0\in{\cal P}(E_0/k)$ and $\phi_0\in
[\Gamma(p)^{\mathrm{can}}](E_0/k)$; note that $\phi_0(1,0)$ and
$\phi_0(0,1)$ are both~$0$, as $E_0$ is supersingular. Let $R$ be the
completion of the local ring of $X_{i,\F_p}$ at~$\mathrm{x}$. 

By construction, $R$ is the universal formal deformation ring of
$(E_0,\alpha_0,\phi_0)$ to Artin local $k$-algebras. 
%% Add an explanation that $R$ is indeed a $k$-algebra? 
That is, restricting the universal triple over 
${\cal M} ({\cal P}, \Gamma (p)^{\mathrm{can}})_k$ to $R$ gives the
Cartesian diagram 
\[
\begin{tikzcd}
(E_{0,k},\alpha_0,\phi_0) \ar[r] \ar[d] \ar[dr, phantom, "\Box"] & (\E_R,\alpha^\univ,\phi^\univ) \ar[d]\\
\mathrm{Spec}(k) \ar[r] & \mathrm{Spec}(R)\,.
\end{tikzcd}
\]
This diagram has the property that 
for every Artin local $k$-algebra~$A$ with residue field~$k$,
every $(E/A,\alpha,\phi)$, and every Cartesian diagram 
\[
\begin{tikzcd}
(E_{0,k},\alpha_0,\phi_0) \ar[r] \ar[d] \ar[dr, phantom, "\Box"] & (E,\alpha,\phi) \ar[d]\\
\mathrm{Spec}(k) \ar[r] & \mathrm{Spec}(A)
\end{tikzcd}
\]
there are unique dashed maps
\[
\begin{tikzcd}
(E_{0,k},\alpha_0,\phi_0) \ar[r] \ar[d] \ar[rr, bend left] & (E,\alpha,\phi) \ar[d]
\ar[r, dashed] \ar[dr, phantom, "\Box"] &
(\E_R,\alpha^\univ,\phi^\univ) \ar[d] \\
\mathrm{Spec}(k) \ar[r] \ar[rr, bend right]& \mathrm{Spec}(A) \ar[r, dashed] & \mathrm{Spec}(R)
\end{tikzcd}
\]
that make the diagram commutative, and the right square Cartesian.

In order to get a useful description of~$R$, let $\E/k[[t]]$ be a
universal deformation of $E_0$ to Artin local $k$-algebras with
residue field~$k$ (see Section~\ref{0-1728} for some explicit
ones). As $\E_R$ is a deformation of $E_0$ over~$R$, we have a unique Cartesian diagram
\[
\begin{tikzcd}
E_{0,k} \ar[r] \ar[d] \ar[rr, bend left] & \E \ar[d]
\ar[r, dashed] \ar[dr, phantom, "\Box"] &
\E_R \ar[d] \\
\mathrm{Spec}(k) \ar[r] \ar[rr, bend right]& \mathrm{Spec}(k[[t]]) \ar[r, dashed] & \mathrm{Spec}(R)\,.
\end{tikzcd}
\]
As ${\cal P}$ is \'etale over $(\mathrm{Ell})_{\Z_p}$, $\alpha_0$ lifts
uniquely to every deformation of~$E_0$. Therefore, the connected
component of $({\cal P},[\Gamma(p)^{\mathrm{can}}])_{\E/k[[t]]}$
containing $\mathrm{x}$ is equal to the base change of 
$[\Gamma(p)^{\mathrm{can}}]_{\E/k[[t]]}$ via $k\to k'$, and hence 
\[
\mathrm{Spec}(R) = 
[\Gamma(p)^{\mathrm{can}}]_{\E/k[[t]]}\times_{\mathrm{Spec}(k)}\mathrm{Spec}(k')\,,
\]
that is, $\mathrm{Spec}(R)$ is the $k'[[t]]$-scheme representing all
$\Gamma(p)^{\mathrm{can}}$-structures on~$\E_{k'[[t]]}/k'[[t]]$. Being
this, $R$ is
a $k'[[t]]$-algebra, free of rank $\#\SL_2(\F_p)$ as $k'[[t]]$-module.

Let $Z$ be a parameter of the formal group of~$\E /k[[t]]$. Then, as
$E/k$ is supersingular, $\phi^\univ(1,0)$ and $\phi^\univ(0,1)$ in
$\E(R)$ are two points of that formal group. We write 
\[
x=Z(\phi(1,0))\in R,\quad y=Z(\phi (0,1))\in R 
\]
for their respective parameters. Katz and Mazur prove
in~\cite[\S5.4]{KM85} that $x$ and $y$ generate the maximal ideal
of~$R$, hence that $R$ is a quotient of the formal power series ring
$k'[[x,y]]$ whose $x$ and $y$ map to $x$, resp.~$y$ in~$R$. The fact
that $X_{i,\F_p}$ is the union of the ${\mathrm{Ig}}_{i,L}$ for $L$
running through $\PPP^1 (\F_p )$ means that the kernel of
$k'[[x,y]]\to R$ is generated by the product of equations of
the~${\mathrm{Ig}}_{i,L}$.  Now the condition that $\phi^\univ$
defines a point on~${\mathrm{Ig}}_{i,L}$ is
$$
\left\{
\begin{array}{ll}
\phi^\univ(1,0)=a\cdot \phi^\univ(0,1) & \quad\text{if $L=\F_p \cdot (1,-a)$} \\
\phi^\univ(0,1)=0 & \quad \text{if $L=\F_p \cdot (0,1)$}
\end{array}
\right.
$$ 
which translates on the formal group, for $\tilde{a}\in\Z_p$ any lift
of $a\in\F_p$, as
$$
\left\{
\begin{array}{ll}
x=[\tilde{a}](y) =ay +{\mathrm{higher\ degree\ terms\ in\ }} y  
& \hspace{0.1cm} {\mathrm{if\ }} L=\F_p \cdot (1,-a) \\
y=0 & \hspace{0.1cm} {\mathrm{if\ }} L=\F_p \cdot (0,1).
\end{array}
\right.
$$ 
The equation $f$ in $k'[[x,y]]$ mod $(x,y)^{p+2}$ is therefore
$y\prod_{a\in \F_p} (x-ay)=x^p y -xy^p$. The regularity of $X_i$
at~$\mathrm{x}$, plus~\cite[Thm.~13.8.4]{KM85} give the following.
\begin{theo}[Katz-Mazur]
\label{KatzMazur2}
The complete local ring of the arithmetic surface $X_i$ at 
a supersingular point~$\mathrm{x}$ is isomorphic to
$$
W(k')[\zeta_p ][[x,y]]/(x^p y -xy^p +g+(1-\zeta_p )f_1)\,,
$$ 
with $k'$ the residue field at~$\mathrm{x}$, $W(k' )$ its ring of Witt vectors, $g$ belongs to the ideal
$(x,y)^{p+2}$ and $f_1$ is a unit of $W(k')[\zeta_p][[x,y]]$. 
\end{theo}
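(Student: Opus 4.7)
The plan is to extend the characteristic-$p$ computation carried out in the paragraphs immediately preceding the theorem to the integral model, and then to pin down the defining equation by invoking the regularity of $X_i$ at $\mathrm{x}$. Write $\calR$ for the complete local ring of $X_i$ at $\mathrm{x}$, and still put $x=Z(\phi^\univ(1,0))$, $y=Z(\phi^\univ(0,1))$ in the formal group of a universal deformation of $E_0$, now taken over $W(k')[[t]]$ rather than over $k'[[t]]$. The reductions of $x$ and $y$ modulo the uniformiser $1-\zeta_p$ of $W(k')[\zeta_p]$ generate the maximal ideal of $\calR/(1-\zeta_p)=R$ by the preceding discussion, so Nakayama's lemma yields a surjection
\[
\pi\colon W(k')[\zeta_p][[x,y]] \twoheadrightarrow \calR.
\]

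The regularity of $\calR$ (of dimension~$2$), supplied by \cite[Thm.~13.8.4]{KM85}, then forces $\ker(\pi)$ to be a height-$1$ prime in the regular $3$-dimensional local ring $W(k')[\zeta_p][[x,y]]$, hence principal; write $\ker(\pi)=(F)$. The image of $F$ in $k'[[x,y]]$ generates the kernel of the reduction $k'[[x,y]]\twoheadrightarrow R$, which the preceding computation identifies with $(x^p y - xy^p + g_0)$ for some $g_0\in(x,y)^{p+2}$. After multiplying $F$ by a suitable unit of $W(k')[\zeta_p][[x,y]]$, I may therefore write
\[
F \;=\; x^p y - xy^p + g + (1-\zeta_p)\,f_1
\]
with $g\in(x,y)^{p+2}$ and some $f_1\in W(k')[\zeta_p][[x,y]]$.

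It remains to show that $f_1$ is a unit, and this is where regularity enters decisively. Write $\mathfrak m=(1-\zeta_p,x,y)$ for the maximal ideal of $W(k')[\zeta_p][[x,y]]$. Both $x^p y - xy^p\in(x,y)^{p+1}$ and $g\in(x,y)^{p+2}$ lie in $\mathfrak m^2$, so the class of $F$ in $\mathfrak m/\mathfrak m^2$ is exactly $f_1(0,0)\cdot(1-\zeta_p)$. The regularity of $\calR=W(k')[\zeta_p][[x,y]]/(F)$ is equivalent to $F\notin\mathfrak m^2$, and therefore forces $f_1(0,0)\neq 0$ in $k'$, i.e.\ $f_1$ is a unit in $W(k')[\zeta_p][[x,y]]$. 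The only genuine obstacle in this plan is the integral regularity of $\calR$ itself, which is not at all formal and constitutes the substance of Katz-Mazur's Theorem~13.8.4; once this is granted, the rest of the argument is a clean Nakayama-plus-leading-terms inspection.
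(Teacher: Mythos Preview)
Your argument is correct and follows essentially the same route as the paper, which merely records that the characteristic-$p$ computation preceding the statement, together with the regularity of $X_i$ at~$\mathrm{x}$ and \cite[Thm.~13.8.4]{KM85}, yield the result. You have simply unpacked how regularity enters: first to make $\ker\pi$ principal, then (via $F\notin\mathfrak m^2$) to force $f_1(0,0)\neq 0$; this is exactly the mechanism the paper leaves implicit. One small remark on attribution: the regularity of $X_i$ over $\Z_p[\zeta_p]$ is not quite the content of \cite[Thm.~13.8.4]{KM85} itself but rather comes from the general regularity results recalled earlier (\cite[5.1.1]{KM85} and the discussion of the moduli problem $[\Gamma(p)^{\zeta_p^i\text{-}\mathrm{can}}]$); Theorem~13.8.4 is closer to the final packaged statement. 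This does not affect the mathematics of your proof.
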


\subsection{The stable model}\label{stable_model_Gamma(p)}

We can now describe how to compute the (semi)stable model of
$\overline{\cal M}({\cal P},[\Gamma(p)])$ over ``the'' completely
ramified degree-$(p^2 -1)$ extension of~$\Z_p^{\mathrm{ur}}$.  (Here and in all
what follows, $\Z_p^{\mathrm{ur}}$ denotes the ring of integers of the maximal unramified extension of
$\Q_p^{\mathrm{ur}}$ of $\Q_p$.)

First we recall a general tool for explicitly computing semistable
models of curves in tame situations, starting from a regular
model. Let $S$ be the spectrum of some discrete valuation ring, whose
generic and closed point we denote by $\eta$ and $s$ respectively. Let
${\cal C}\to S$ be a curve, that is, a $S$-scheme purely of relative
dimension~$1$. Assume ${\cal C}$ is proper and flat over $S$, that
${\cal C}$ is regular, and ${\cal C}_\eta :=C \to S_\eta$ is
smooth. By \cite[Thm~2.26]{Liu02}, (and \cite[Rem.2.27]{Liu02}), after
sufficiently many blow-ups in closed singular points of ${\cal C}$ we
can assume ${\cal C}_s$ is a Cartier divisor on ${\cal C}$ with normal
crossings. Write $n$ for the least common multiple of the
multiplicities of irreducible components of ${\cal C}$ and set
$T:=S[\pi_0^{1/n}]$, for $\pi_0$ some uniformizer on~$S$. Let
$\widetilde{{\cal C}_T}$ be the normalization of the base change
${\cal C}\times_S T \to T$.  Then, assuming $n$ is prime to~$p$, one
knows that $\widetilde{{\cal C}_T}_{/T}$ is a semistable curve: the
only singularities of the geometric special fibre are ordinary double
points, that is, with complete local ring isomorphic to that of the
union of the $2$ coordinate axes in the affine plane, at the
origin. In the case of complex surfaces, this knowledge comes from the
resolution of Hirzebruch--Jung singularities (\cite{Hirzebruch}
and~\cite{Jung}), see~\cite[III.\S5]{BHPvdV} and the historical
remarks at the end of~\cite[III]{BHPvdV}. See \cite[\S2.1]{CES}
for the case we use in this article. For the case where one starts
with a curve ${\cal C}\to S$ with $\cal C$ not necessarily regular, see
\cite[8]{Liu02} and \cite[\S2.1]{CES} for resolution of singularities.

\begin{rema}
\label{higerp^n}
{\rm 
\begin{itemize}
\item From our semistable model it is not hard to obtain a stable one
  via appropriate contractions. 
\item In the case of modular curves, the hypothesis that $n$ be prime
  to $p$ is typically {\em not} satisfied when the level is divisible by
  $p^2$. For those 
more difficult cases rigid analytic methods are more succesful, as
shown in the work of Weinstein (\cite{Wein16}; see also references in
the Introduction of loc. cit.). 
\end{itemize}
}
\end{rema}

We apply the above to compute a semistable model for $\overline{\cal
  M}({\cal P},[\Gamma(p)])$. Actually, for $p$ not too small, that
model will happen to be even {\it stable}.  Starting from the regular
curve $\overline{\cal M}({\cal P},[\Gamma(p)])^{\sim}_{\Z_p[\zeta_p]}$
over~$\Z_p[\zeta_p]$, equal to $\coprod_i X_i$ by~(\ref{X_i}), we
sum-up the algorithm we follow:
\begin{enumerate}
\item blow-up singular points in the closed fiber until having normal crossings; 
\item provided the l.c.m.~$n$ of multiplicities of components is prime
  to $p$ (which will be the case for us), base-change to ``the''
  purely ramified-at-$p$ extension of $\Q_p$ of degree $n$ and
\item normalize; we denote the result by~$\overline{\cal
  M}({\cal P},[\Gamma(p)])^{\mathrm{st}}$.    
\end{enumerate} 

It is clear from that construction and Theorem~\ref{KatzMazur1} that
the special fiber of our semistable model $\overline{\cal M} ({\cal
  P},[\Gamma (p)]) ^{\mathrm{st}}$ over $\Z [(1-\zeta_p )^{1/p+1}]$
will have two types of irreducible components: the ``vertical ones'',
obtain by simple base change from the components of
Katz-Mazur model, and the ``horizontal ones'', which contract to
supersingular points in that model. The former vertical components, which
are copies of the $\overline{\cal M} ({\cal P},
[{\mathrm{ExIg}} (p,1 )])$, will be called {\it Igusa parts}. The
latter horizontal ones will be referred to as {\it Drinfeld  components} and computed in next section.

\subsubsection{Drinfeld components}
\label{Drinfeld}

We know from Theorem~\ref{KatzMazur2} that the complete local ring
of~$X_i$ at some singular point~$s$ is $W(k)[\zeta_p][[x,y]] /(f)$,
for $k$ the residue field of~$s$, and $f=x^p y -xy^p +g+(1-\zeta_p
)f_1$, with $g$ in the ideal $(x,y)^{p+2}$ and $f_1$ a unit of
$W(k)[\zeta_p][[x,y]]$. The completion along the exceptional divisor
of the blow up of~$X_i$ in~$s$ is therefore covered by two affine open
${\mathrm{Spf}} (A_1 )$ and ${\mathrm{Spf}} (A_2 )$, with $A_1
=W(k)[\zeta_p ][v][[x]] /f(x,vx)$ and $A_2 =W(k)[\zeta_p ][u][[y]] /f(uy,y)$
(see~\cite{Edix89}, 1.3.1). So here
$$
A_1 = W(k)[\zeta_p ][v][[x]] /(x^{p+1} v(1-v^{p-1}) +g+(1-\zeta_p )f_1 (x,xv))
$$
which shows that the exceptional divisor in ${\mathrm{Spf}} (A_1 )$
has multiplicity $p+1$, and same with~$A_2$. So we extend the base
ring $W(k)[\zeta_p]$ to $W(k)[\pi]$ with
\begin{eqnarray}
\label{pi} 
\pi :=(1-\zeta_p )^{1/(p+1)}
\end{eqnarray}
so that, writing $g(x,vx) =x^{p+2}h$,
$$
A_1 \otimes_{W(k)[\zeta_p  ]} W(k)[\pi] = 
W(k)[\pi ][v][[x]] /(x^{p+1} v(1-v^{p-1}) +x^{p+2}h +\pi^{p+1}f_1 (x,xv)) 
$$  
and, to normalize it, we blow up at $(x,\pi )$. This means we set $\pi =xw$ and 
$$
A_1 \tilde{\otimes}_{W(k)[\zeta_p  ]} W(k)[\pi] = 
W(k)[\pi ][v][[x]] [w] /(\pi -xw ,(v-v^{p}) +xh +w^{p+1}f_1 (x,xv)) .
$$
The corresponding affine part of the exceptional divisor $D_{i,s}$
above $s$ is given by $x=0$, so that $D_{i,s}$ has an affine model with equation 
\begin{eqnarray}
aw^{p+1}=v^p -v
\end{eqnarray}
for $a\in k^*$ the image of~$f_1 (0,0)$. One could possibly determine that $a$ but we will content
ourselves in that paper with geometric models so we will henceforth assume $a=-1$. Putting $\alpha =1/w$
and $\beta =v/w$ gives the other model
\begin{eqnarray}
\label{modelDis}
\alpha^{p} \beta -\alpha \beta^p =1 .
\end{eqnarray}

 Note the singularities of our model have thickness $1$.

Keeping track of our parameters, we register that
\begin{eqnarray}
\label{lesparametres}
\alpha =w^{-1} =\pi^{-1} x \hspace{0.5cm} {\mathrm{and}} \hspace{0.5cm} \beta =w^{-1} v=\pi^{-1} vx =\pi^{-1} y.
\end{eqnarray}

\begin{rema}
\label{supersinguliers}
{\rm The above Drinfed components are supersingular (i.e. have supersingular jacobians), which means that their quotients
showing-up in the models below are, too. Indeed, from~(\ref{modelDis})  we know they have geometric projective equation $X^p Y -XY^p =Z^{p+1}$
(a so-called ``Hermitian equation''). Hurwitz formula shows their genus is $g=\frac{1}{2} p(p-1)$. Considering the 
form $H_s$ given by the equation $X^p Y -XY^p =aZ^{p+1}$, for $a\in \F_{p^2}$ some non-trivial square root of an 
element in $\F_p$, one checks that the number $\# H_s (\F_{p^2} )$ of points of $H_s$ with values 
in  $\F_{p^2}$ is $(p^3 +1)$. That therefore means that $H_s$ is {\it maximal over $\F_{p^2}$}: 
$$
\# H_s (\F_{p^2} )=1+p^2 +2pg
$$
is the maximum allowed by Weil's bound. Now if $(a_i )_{1\le i\le 2g}$ is the set of eigenvalues of the Frobenius 
endomorphism of $\mathrm{Jac} (H_s )$ over $\F_p$, we have $\# H_s (\F_{p^2} ) =1+p^2 -\sum_{i=1}^{2g} a_i^2$, 
and Riemann's hypothesis $\vert a_i\vert \leq \sqrt{p}$ implies that $a_i^2 =-p$, so that Frobenius has 
characteristic polynomial $(X^2 +p)^g$. The latter is the characteristic polynomial of Frobenius on $E^g$, for $E$ a 
supersingular elliptic curve over $\F_p$.
}
\end{rema}

\subsubsection{Points with exceptional automorphisms}
\label{0-1728}

In order to compute stable models for level structures defining
non-rigid moduli problems, that is, to compute stable models for
coarse moduli spaces, we shall consider quotients of the above stable
models $\overline{{\cal M}}({\cal P},[\Gamma (p)])^{\mathrm{st}}$ by
relevant subgroups $H\subseteq \GL_2 (\F_p )$, such as
$H=\Gamma_{\mathrm{s}}(p)$, $\Gamma_{\mathrm{ns}}(p)$,
$\Gamma_{\mathrm{s}}^+ (p)$ or $\Gamma_{\mathrm{ns}}^+(p)$, that is, the split
or non-split Cartan subgroup, or their respective normalizers. Then to
get rid of the rigidifying level structure ${\cal P}$, we shall assume
it is representable, finite \'etale over $(\mathrm{Ell})_{\Z_p}$, and
Galois of group $G$; finally we take the quotient of our
$\overline{{\cal M}}({\cal P},[H])^{\mathrm{st}}$ by the action
of~$G$. To describe the local situation above singular points of
$\overline{{\cal M}}({\cal P},
H^{\zeta_p\mathrm{-can}})_{\Z_p[\zeta_p]}$ with extra automorphisms,
we however need to describe the action of those automorphisms on the
relevant deformation rings.

So let $E_0$ be a supersingular elliptic curve over $k:=\F_{p^2}$,
such that ${\mathrm{Aut}}_k (E_0 )$ is cyclic of order $4$ ($j=1728$)
or $6$ ($j=0$). Let $\mathrm{x}$ be a point of ${\cal M} ({\cal P},
\Gamma (p)^{\mathrm{can}})_k$ whose underlying elliptic curve
is~$E_0$, and let $k'$ be its residue field. Then $\mathrm{x}$ is a
triple $(E_{0,k'} ,\alpha_0 ,\phi_0 )$ with $\alpha_0$ in ${\cal
  P}(E_{0,k'}/k')$ and $\phi_0$ the unique (trivial) element of
$[\Gamma(p)^{\mathrm{can}}](E_{0,k'}/k')$. Let $R$ be the completion
of the local ring of ${\cal M} ({\cal P}, \Gamma (p)
^{\mathrm{can}})_k$ at~$\mathrm{x}$. 
In order to get a useful description of~$R$, we first give
a universal deformation of $E_0$ to Artin local $k$-algebras with
residue field~$k$.

If $j=1728$, one can check (cf.~\cite{Edix89}, 1.3.2) that the
elliptic curve $\E$ over $k[[t]]$ given by the Weierstrass equation
\begin{equation}
\label{Weierstrassj=1728}
Y^2 =X^3 -X+t
\end{equation} 
is universal. (Indeed, it is well-known that one can choose for $E_0$
an equation of shape $Y^2 =X^3 -X$. Any deformation of $E_0$ to an
Artinian local $k$-algebra $A$ with residue field $k$ and maximal
ideal $m$ can then be given an equation $Y^2 =X^3 +a X +b$, with $a+1$
and $b$ in $m$. (Recall that $p>3$.) Now one can write $a=-c^4$ for $c\in A$ congruent
to $1$ mod $m$. Replacing the variables $X$ and $Y$ by $c^{-2} X$ and
$c^{-3} Y$ respectively gives the desired model for $\E$.)  
The action of a generator $i$ of
$\mu_4 (k) = {\mathrm{Aut}}_k(E_0 )$ (via action on tangent space
at~$0$) is given by:
\[
[i]\colon X\mapsto -X, \quad Y\mapsto iY,\quad t\mapsto -t.
\] 

In the case $j=0$ one similarly sees that a model for $\E$ over
$k[[t]]$ is given by the Weierstrass equation 
\begin{equation}
\label{Weierstrassj=0}
Y^2 =X^3 +tX-1
\end{equation} 
with automorphism action given by:
\[
[\zeta ]\colon X\mapsto \zeta^{-2} X, \quad Y\mapsto -Y, \quad t\mapsto
\zeta^2 t
\]
for $\zeta$ some generator 
of $\mu_6 (k)={\mathrm{Aut}}_k (E_0 )$ (again, identification via the
action on the tangent space at~$0$). 

As ${\cal P}$ is \'etale over $(\mathrm{Ell})_{\Z_p}$, $\alpha_0$ lifts
uniquely to every deformation of~$E_0$. Therefore, 
\[
\mathrm{Spec}(R) = 
[\Gamma(p)^{\mathrm{can}}]_{\E/k[[t]]}\times_{\mathrm{Spec}(k)}\mathrm{Spec}(k')\,.
\]
To arrive at the description  of $R$ by  Katz-Mazur in
Theorem~\ref{KatzMazur2}, we choose $X/Y$ as parameter $Z$ of the
formal group of $\E /k[[t]]$, with $X$ and $Y$ the functions of the
Weierstrass model above. Our description of the action of $[i]$ and
$[\zeta]$ shows that
$[i] \colon Z\mapsto iZ$, and $[\zeta] \colon Z\mapsto \zeta Z$. Therefore as $\phi (1,0)$ and $\phi (0,1)$ constitute 
our Drinfeld basis of~$\E [p]$, we have $x=Z(\phi (1,0))$ and 
$y=Z(\phi (0,1)) \in R$ for their respective parameters as in
Theorem~\ref{KatzMazur2} so that $[i]$ maps them to $ix$ and $iy$
respectively, and $[\zeta]$ maps them to $\zeta x$ and~$\zeta y$. 
It means the parameters $\alpha$ and $\beta$ of
equations~(\ref{lesparametres}) are mapped to $i\alpha$ and $i\beta$
respectively, and similarly to $\zeta \alpha$ and~$\zeta \beta$. (One immediately
checks that equation~(\ref{modelDis}) is preserved because
$p+1$ is divisible by the order of the automorphism.)

\subsubsection{The action of $\GL_2 (\F_p )$}
\label{GL2}

The action of $\GL_2 (\F_p )$ on ${\cal M}({\cal P},\Gamma(p))$ from the
right has the obvious modular interpretation:
$$
r(g) \colon (E/S,\alpha,\phi )\mapsto (E/S,\alpha,\phi\circ g).
$$ 
By construction, that extends uniquely to an action on our semistable
model $\overline{{\cal M}}({\cal P},\Gamma (p))^{\mathrm{st}}$, and we
want to describe this on the special fiber.  As $e_p (\phi \circ
g(1,0), \phi \circ g(0,1)) =e_p (\phi (1,0), \phi (0,1))^{\det (g)}$,
the action of $g$ on $\coprod_i X_i \otimes_{\Z [\zeta_p ]}
\overline{\F}_p$ is
$$
\left(  (E/S/ \overline{\F}_p,\alpha,\phi ),i \right) \mapsto \left(
  (E/S/ \overline{\F}_p,\alpha ,\phi \circ g ),i\det (g) \right) .  
$$
The action of $\GL_2 (\F_p )$ on the ${\mathrm{Ig}}_{i,P}$ goes
therefore as follows. Each $g$ induces an isomorphism  
$$
r(g)\colon {\mathrm{Ig}}_{i,P} \stackrel{\sim}{\longrightarrow} {\mathrm{Ig}}_{i\det (g),g^{-1} P} 
$$
so that the stabilizer of ${\mathrm{Ig}}_{i,P}$ is the Borel subgroup $B_P$ of ${\mathrm{SL}}_2 (\F_p )$ that fixes the line $P$. 
%
%If  $\chi_P \colon B_P \to \F_p^*$ is the character giving the action of $B_P$ on $P$ 
%
As for the Drinfeld components, $g$ induces an isomorphism
$$
r(g) \colon D_{i,s} \stackrel{\sim}{\longrightarrow} D_{i\det (g), s} 
$$
and the stabilizer of $D_{i,s}$ is ${\mathrm{SL}}_2 (\F_p )$. Recalling the notation we have introduced before 
Theorem~\ref{KatzMazur2} we denote by $Z$ a parameter of the formal group of the universal deformation 
$\E /\overline{\F}_p [[t]]$, so that our universal $p$-torsion basis have parameters $x=Z(\phi (1,0))$ and $y=Z(\phi (0,1))$.
Writing $g=\left( \begin{array}{cc}
a & b\\
c & d
\end{array}
\right)$ in ${\mathrm{SL}}_2 (\F_p )$, we see that $g$ acts from the left on $W(k' ) [\zeta_p ] [[x,y]]/(f)$ by:
\begin{eqnarray}
r(g)^{\#} x & = & r(g)^{\#} Z(\phi (1,0))= Z(\phi \circ g(1,0)) =Z(\phi (a,c))  \nonumber \\
& \equiv  &  aZ(\phi (1,0)) + bZ(\phi (0,1)) \mod (x,y)^2 \equiv  ax+cy \mod    (x,y)^2  \\
r(g)^{\#} y & \equiv  & bx+dy \mod   (x,y)^2  .
\end{eqnarray}  
It therefore follows from~(\ref{lesparametres}) that $g$ acts on our model~(\ref{modelDis}) by 
\begin{eqnarray}
\label{gonalpha}
  r(g)^{\#} \alpha = a\alpha +c\beta \hspace{0.5cm} {\mathrm{and}}
  \hspace{0.5cm} r(g)^{\#} \beta = b\alpha +d\beta \ ;
\end{eqnarray}
one readily checks that equations~(\ref{modelDis}) are preserved by the action of ${\mathrm{SL}}_2 (\F_p )$.

\subsubsection{Galois action}
\label{Inertia}

Let $G_\Q$ be the absolute Galois group of $\Q$, and $G_p$ its
decomposition group at a maximal ideal of $\overline{\Z}$ over~$p$,
which we identify with the absolute Galois group $G_{\Q_p}$ of~$\Q_p$.
If $\Q_p^{\mathrm{ur}}$ and $\Q_p^{\mathrm{t}}$ are the usual
notations for the maximal unramified and tame extension of $\Q_p$
respectively, the sequence of inclusions $\Q_p \subset
\Q_p^{\mathrm{ur}} \subset \Q_p^{\mathrm{t}} \subset \overline{\Q}_p$
induces the sequence of Galois subgroups
$$
I_p \triangleleft I \triangleleft G_p \triangleleft G_\Q
$$
where correspondingly $I$ is the inertia subgroup, and $I_p$ its wild inertia subgroup. The tame inertia group $I_t :=I/I_p \simeq {\mathrm{Gal}} (\Q_p^{\mathrm{t}} 
/\Q_p^{\mathrm{ur}})$ can be identified with $\varprojlim_{p \nmid  n} \mu_n (\overline{\F}_p )$ (where $\mu_n$ stands for the $n^{\mathrm{th}}$-roots
of unity) by 
\begin{eqnarray}
\label{sigma}
\sigma \mapsto (\sigma (p^{1/n} ) /p^{1/n} ))_{p\, \nmid  \, n}
\end{eqnarray}
(so that the transition morphisms $\mu_{nm} \to \mu_n$ are given by $\zeta_{nm} \mapsto \zeta_{nm}^m$), and that is still isomorphic 
to  $\varprojlim \F_{p^n}^*$ (in which transition morphisms are now given by the norm): this is Serre's theory of 
``caract\`eres fondamentaux'', cf.~\cite{Se72}, paragraph~1.3. 
Any $\sigma$ in $G_\Q$ induces an automorphism
\[
\gamma (\sigma ) :={\mathrm{id}}\times {\mathrm{Spec}} (\sigma )\quad\text{of}\quad 
\overline{\cal M} ({\cal P},\Gamma (p)) \times_\Q {\mathrm{Spec}}
(\overline{\Q} )
%=\overline{\cal M} ({\cal P},\Gamma (p))_{\Q (\pi)} \times_{\mathrm{Spec} (\Q (\pi ) )} 
%\mathrm{Spec} (\overline{\Q} ) )\,,
\]
with $\pi =(1-\zeta_p )^{\frac{1}{p+1}}$ as in~(\ref{pi}). The above fiber product is also 
$\overline{\cal M} ({\cal P},\Gamma (p))_{\Q (\pi)} \times_{\mathrm{Spec} (\Q (\pi ) )} 
\mathrm{Spec} (\overline{\Q} )$
and $\gamma (\sigma )$ extends uniquely to an automorphism of
$\overline{\cal M} ({\cal P},\Gamma (p))^{\mathrm{st}}_{\Z [\pi ]} \times_{{\mathrm{Spec}} 
(\Z [\pi ])} \mathrm{Spec} (\overline{\Z} )$
that we still denote by $\gamma (\sigma )$. It follows that any $\sigma$ in $G_{\Q_p}$ induces an automorphism of the special fiber
$\overline{\cal M} ({\cal P}, \Gamma (p))^{\mathrm{st}}_p
:=\overline{\cal M} ({\cal P},\Gamma (p))^{\mathrm{st}}_{\Z [\pi ]} \times_{{\mathrm{Spec}} (\Z  [\pi ])} \overline{\F}_p$, 
and if $\sigma$ actually belongs to $I$ then $\gamma (\sigma )$ is an $\overline{\F}_p$-automorphism.  

The extension $\Z \to \Z [\pi ]$ having degree $p^2 -1$ and being
totally ramified at $p$, the inertia action just defined factorizes
through an antihomomorphism $\overline{\gamma} \colon \F_{p^2}^* \to
{\mathrm{Aut}}_{\overline{\F}_p} (\overline{\cal M} ({\cal P},\Gamma
(p))^{\mathrm{st}}_p )$.  The action of $I$ on $\mu_p (\F_p )$
factorizes through $I\to \F_{p^2}^* \stackrel{N}{\longrightarrow}
\F_p^*$, for $N$ the norm map $x\mapsto x^{p+1}$.  This means that the
action of inertia on the left-hand side $\coprod_{i\in \F_p^*} X_i$
of~(\ref{X_i}) has the modular interpretation:
$$
(E/S \stackrel{f}{\longrightarrow} {\mathrm{Spec}} (\Z [\zeta_p ]) ,\phi )_i \stackrel{\overline{\gamma}(u)}{\longmapsto} (E/S \stackrel{f'}{\longrightarrow} 
{\mathrm{Spec}} (\Z [\zeta_p ])  ,\phi )_{iu^{-p-1}}
$$  
for $f' ={\mathrm{Spec}} (u^{p+1}) \circ f$. It follows that $u \in \F^*_{p^2}$ induces isomorphisms:
\begin{eqnarray}
\label{gammabar}
\overline{\gamma} (u)\colon
\left\{
\begin{array}{rcl}
{\mathrm{Ig}}_{i,P} & \stackrel{\sim}{\longrightarrow} & {\mathrm{Ig}}_{iu^{-p-1},P} \\
D_{i,P} & \stackrel{\sim}{\longrightarrow} & D_{iu^{-p-1},P}  \hspace{0.1cm} .
\end{array}
\right. 
\end{eqnarray}
The stabilizer in $\F^*_{p^2}$ of both types of components is the kernel $\mu_{p+1} (\F_{p^2} ) = (\F_{p^2}^*)^{p-1}$ of the norm map. Because the ${\mathrm{Ig}}_{i,P}$
are already components of the special fiber of some $\Z [\zeta_p ]$-scheme, $\mu_{p+1} (\F_{p^2} )$ acts trivially on each of them. As for the $D_{i,s}$,
we see from~(\ref{sigma}) that $\overline{\gamma} (u) (\pi )=u\pi \mod \pi^2$, so that~(\ref{lesparametres}) implies that $u\in \mu_{p+1} (\F_{p^2} )$ induces the
automorphism
\begin{eqnarray}
\label{gammabarDis}
\overline{\gamma} (u)^\# \colon \alpha \mapsto u^{-1} \alpha \hspace{0.5cm} {\mathrm{and}} \hspace{0.5cm}  \beta \mapsto u^{-1} \beta 
\end{eqnarray}
on the model~(\ref{modelDis}) of $D_{i,s}$.

% f 

\section{Non-split Cartan structures}
\label{NonSplitCartan}

\subsection{Stable model for $\overline{{\cal M}} ({\cal P}, \Gamma_{\mathrm{ns}} (p))$}

We compute the stable model for modular curves associated with a non-split Cartan group $\Gamma_{\mathrm{ns}} (p)\subseteq \GL_2 (\F_p )$
(but {\em not} its normalizer), endowed with some additional level structure ${\cal P}$.
 
\begin{theo} 
\label{NS} 
Let $p>3$ be a prime, and let $[\Gamma_{\mathrm{ns}} (p)]$ be the moduli problem over $\Z [1/p]$ associated with $\Gamma_{\mathrm{ns}} (p)$.
Let ${\cal P}$ be a representable moduli problem, which is finite \'etale over $({\mathrm{Ell}})_{/\Z_p}$  % ${\cal{E}\ell \ell}_{/\Z_p}$ 
(take for instance ${\cal P} =[\Gamma (N) ]$  for some $N\ge 3$ not divisible by $p$). 
Let $\overline{{\cal M}} ({\cal P}, \Gamma_{\mathrm{ns}} (p)) =\overline{{\cal M}} ({\cal P}, \Gamma (p))/ \Gamma_{\mathrm{ns}} (p)$ be the associated 
compactified fine moduli space. 
Let $W$ be a totally ramified extension of $\Z_{p}^{\mathrm{ur}}$ of degree $(p^2 -1)/2$ (for instance $W:=\Z_{p}^{\mathrm{ur}} [(1-\zeta_p )^{2/(p+1)}] $. 
Recall that $\Z_{p}^{\mathrm{ur}}$ denotes the ring of integers of the maximal unramified extension $\Q_{p}^{\mathrm{ur}}$ of $\Q_p$).

   Then $\overline{{\cal M}} ({\cal P}, \Gamma_{\mathrm{ns}} (p))$ has a semistable model over $W$ whose special fiber is made of two vertical Igusa parts,
which are linked by horizontal Drinfeld components above each supersingular point of $\overline{{\cal M}} ({\cal P})$ via the projection
$\overline{{\cal M}} ({\cal P}, \Gamma_{\mathrm{ns}} (p)) \to \overline{{\cal M}} ({\cal P})$.

\medskip 
 
 Both vertical parts, call them ${\mathrm{Ig}} (p ,{\cal P})_1$ and ${\mathrm{Ig}} (p, {\cal P})_d$ (for $d\in \F_p^*$ a non-square),
are isomorphic to the enhanced Igusa curve $\overline{{\cal M}} ({\cal P}, {\mathrm{Ig}} (p)/\{ \pm 1\} )_{\overline{\F}_p} $.

\medskip

  If ${\cal S}_{\cal P}$ is the number of supersingular points of $\overline{{\cal M}} ({\cal P}) (\overline{\F}_p)$,
the ${\cal S}_{\cal P}$ horizontal (Drinfeld) components are all copies of some hyperelliptic smooth curve $D$ for which an affine model is given by 
\begin{eqnarray}
\label{DrinfeldXns0}
U^2 =V^{p+1} +A_{\mathrm{ns}} 
\end{eqnarray}
for some $A_{\mathrm{ns}} $ in $\overline{\F}_p^*$.

\medskip

With $\pi_0$ a uniformizer of $W$ (e.g.
$\pi_0 =(1-\zeta_p )^{2/(p+1)}$), the completed local rings of the
singular points in the special fiber are isomorphic to
$W[[x,y]]/(xy-\pi_0 )$.
  
%    
%$$
%
\end{theo}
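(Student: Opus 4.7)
The plan is to descend from the stable model $\overline{{\cal M}}({\cal P},[\Gamma(p)])^{\mathrm{st}}$ of Section~\ref{X(p)} by forming the quotient by the right action of $\Gamma_{\mathrm{ns}}(p)$. Since $|\Gamma_{\mathrm{ns}}(p)|=p^2-1$ is coprime to $p$, the quotient is tame and preserves semistability, with widths of nodes multiplied by the orders of their stabilizers. I would proceed in three steps: (i) orbit analysis, (ii) computation of quotient components, (iii) analysis of the local rings at the nodes. For (i), by Section~\ref{GL2} an element $g$ acts on Igusa parts as $(i,L)\mapsto(i\det(g),g^{-1}L)$ and on Drinfeld parts as $(i,s)\mapsto(i\det(g),s)$. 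Identifying $\Gamma_{\mathrm{ns}}\cong\F_{p^2}^*$, the determinant becomes the norm $N\colon\F_{p^2}^*\to\F_p^*$, surjective with kernel $\mu_{p+1}$; moreover $\Gamma_{\mathrm{ns}}$ acts transitively on $\PPP^1(\F_p)$ with scalar stabilizer $\F_p^*$ on which $\det$ is squaring. A counting argument then produces exactly two orbits of size $(p^2-1)/2$ on pairs $(i,L)$, each with stabilizer $\{\pm I\}$, and for each supersingular point~$s$ a single orbit on $\{D_{i,s}\}_{i\in\F_p^*}$ with stabilizer $\mu_{p+1}$, matching the claimed combinatorial shape.

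For step (ii), each Igusa orbit contributes $\overline{{\cal M}}({\cal P},[\mathrm{ExIg}(p,1)])_{\overline{\F}_p}/\{\pm I\}\cong\overline{{\cal M}}({\cal P},\mathrm{Ig}(p)/\{\pm 1\})_{\overline{\F}_p}$, since $-I$ negates the Drinfeld basis. The heart of the proof is to compute $D/\mu_{p+1}$ for a Drinfeld component $D\colon\alpha^p\beta-\alpha\beta^p=1$. I would realize $\Gamma_{\mathrm{ns}}$ as the image of $\F_{p^2}=\F_p[\omega]$ acting by multiplication on $\F_p^2$, with $\omega^2=\epsilon\in\F_p^*$ a fixed non-square, so that $\mu=u+v\omega$ corresponds to the matrix with $a=d=u$, $b=v\epsilon$, $c=v$. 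Choosing $\zeta\in\overline{\F}_p$ with $\zeta^2=\epsilon^{-1}$ (so $\zeta\in\F_{p^2}\setminus\F_p$ and $\zeta^p=-\zeta$), the formulas~(\ref{gonalpha}) diagonalize on the Frobenius-conjugate eigenvectors $\tilde Z_1=\alpha+\zeta\beta$ and $\tilde Z_2=\alpha-\zeta\beta$ as $\tilde Z_1\mapsto\lambda\tilde Z_1$, $\tilde Z_2\mapsto\lambda^{-1}\tilde Z_2$ with $\lambda=u+\zeta v\epsilon\in\mu_{p+1}(\overline{\F}_p)$. The invariants in $\overline{\F}_p[\tilde Z_1,\tilde Z_2]$ are then generated by $\tilde Z_1\tilde Z_2$, $\tilde Z_1^{p+1}$, $\tilde Z_2^{p+1}$. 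Setting $V:=\alpha^2-\epsilon^{-1}\beta^2$ and $U:=\alpha^{p+1}-\epsilon^{-1}\beta^{p+1}$, a direct expansion using the Drinfeld relation $\alpha^p\beta-\alpha\beta^p=1$ yields $\tilde Z_1^{p+1}=U+\zeta$ and $\tilde Z_2^{p+1}=U-\zeta$; these collapse the invariant ring on $D$ to $\overline{\F}_p[U,V]$ with single relation $U^2=V^{p+1}+\epsilon^{-1}$, which is (\ref{DrinfeldXns0}) with $A_{\mathrm{ns}}=\epsilon^{-1}\in\overline{\F}_p^*$.

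Finally, for step (iii), the nodes of the $\Gamma(p)$-semistable model have complete local ring $W[[x,y]]/(xy-\pi)$ of width~$1$ (end of Section~\ref{Drinfeld}). The stabilizer in $\Gamma_{\mathrm{ns}}$ of a node $D_{i,s}\cap\mathrm{Ig}_{i,L}$ is $\mu_{p+1}\cap\{\pm I\}=\{\pm I\}$; by (\ref{gonalpha}) and~(\ref{lesparametres}), $-I$ acts as $(x,y)\mapsto(-x,-y)$, and taking invariants in $W[[x,y]]/(xy-\pi)$ yields $W[[u,v]]/(uv-\pi^2)$ with $u=x^2,\,v=y^2$, giving width~$2$ as stated. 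The main obstacle throughout is step~(ii): the Cartan diagonalization only lives over $\overline{\F}_p$ through $\zeta\notin\F_p$, and the key observation is that the Drinfeld relation together with Frobenius conjugacy of $\tilde Z_1,\tilde Z_2$ forces $\tilde Z_1^{p+1}-\tilde Z_2^{p+1}$ to be the constant $2\zeta$; it is precisely this collapse that reduces the a priori three-generator invariant ring to the desired two-generator hyperelliptic equation.
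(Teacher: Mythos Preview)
Your proof is correct and follows essentially the same strategy as the paper: quotient the semistable model of $\overline{\cal M}({\cal P},[\Gamma(p)])$ by $\Gamma_{\mathrm{ns}}(p)$, analyse the orbits on Igusa and Drinfeld parts, and diagonalise the Cartan action on each Drinfeld component over~$\F_{p^2}$ to compute the invariant ring. The paper also passes to Frobenius-conjugate eigenvectors (its $\tilde\alpha,\tilde\beta$ play the role of your $\tilde Z_1,\tilde Z_2$) and obtains the same hyperelliptic equation; your observation that the Drinfeld relation forces $\tilde Z_1^{p+1}-\tilde Z_2^{p+1}=2\zeta$ to be constant is exactly what underlies the paper's reduction from $(u_1,v_1)$ to $(U,V)$, though the paper leaves this implicit. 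For the node widths the paper simply cites Liu, Chapter~10.3, Proposition~3.48, whereas you compute the $\{\pm I\}$-invariants of $W[[x,y]]/(xy-\pi)$ directly; your argument is more self-contained at that point but otherwise the two proofs coincide.
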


\begin{rema}
\label{bisrepetitas}
{\rm Recall, as in Remark~\ref{semistableouquoi}, that we would have liked to call ``vertical components'' our ``vertical parts''
${\mathrm{Ig}} (p ,{\cal P})_1$ and ${\mathrm{Ig}} (p, {\cal P})_d$ above, but were formally prevented from doing so because ${\overline{\cal M}} ({\cal P})$
may not be irreducible itself.

    Finally, the constant terms $A_{\mathrm{ns}}$ in equations~(\ref{DrinfeldXns0}) could obviously been taken as $1$, as we here are only interested
in geometric models; the same holds for similar terms in the forthcoming parallel statements about split Cartan curves, etc. We leave that 
presentation as a reminder that a more precise determination could possibly be computed some day.} 
\end{rema}

For a picture of the curve we refer to Figure~\ref{figureNS}: it actually represents the coarse quotient $X_{\mathrm{ns}} (p)$, but that does not
affect the general shape.

\begin{proof}
Let $W'$ be the ramified quadratic extension $\Z_{p}^{\mathrm{ur}}[(1-\zeta_p )^{1/(p+1)}]$ of~$W$. 
One starts with the semistable model of $\overline{{\cal M}} ({\cal P}, \Gamma (p))$ of over $W'$ as described in Section~\ref{X(p)}, and takes the
quotient by the non-split torus $\Gamma_{\mathrm{ns}}(p) $ in $\GL_2 (\F_p)$ fixed above. This quotient is a semistable model of $\overline{{\cal M}} ({\cal
  P}, \Gamma_{\mathrm{ns}} (p))$  over~$W'$, with an action by $\F^*_{p^2}$ as described in Section~\ref{Inertia} (note that the $\GL_2 (\F_p)$-action
of Section~\ref{GL2} and the Galois action in Section~\ref{Inertia} commute with each other). The Galois group of $W'$ over $W$ is the subgroup
$\{\pm1\}$ of~$\F^*_{p^2}$. We will check that $\{\pm1\}$ acts trivially on the special fibre of our semistable model over~$W'$. Then the quotient
of that model over~$W'$ by $\{\pm1\}$ is the promised model over~$W$, and its pullback to $W'$ is our semistable model over~$W'$.

Recall (Section~\ref{stable_model_Gamma(p)}) that the vertical Igusa parts ${\mathrm{Ig}}_{i,P}$ are
indexed by $\F_p^* \times \PPP^1 (\F_p )$, and the action of $\GL_2 (\F_p )$ on the latter set is given by
$$
(i,(a\colon b)) \stackrel{g}{\mapsto} (i\det(g) ,g^{-1} {\cdot} (a\colon b)).
$$
If $D\simeq \F_p^*$ denotes the subgroup of scalar matrices, the action of $\Gamma_{\mathrm{ns}} (p) $ on $\PPP^1 (\F_p )$ factorizes via the quotient
$\Gamma_{\mathrm{ns}} (p) /D \simeq \Z/(p+1)\Z$ and that action is free and transitive. (Indeed the orbits on $\PPP^1 (\F_{p^2} )$, say, have size
$p+1$ or $1$, and $\PPP^1 (\F_p )$ is preserved.) One can therefore choose as representatives for the cosets
$(\F_p^* \times \PPP^1 (\F_p ) )/\Gamma_{\mathrm{ns}} (p) $ the two elements $(1,(1\colon 0))$ and $(d,(1\colon 0))$ for $d$ some non-square in
$\F_p^*$. Each Igusa component ${\mathrm{Ig}}_{i,P}$ has stabilizer $\pm 1$ in $\Gamma_{\mathrm{ns}} (p) $, so the two vertical parts are
isomorphic to $\overline{{\cal M}} ({\cal P}, {\mathrm{Ig}} (p)/\{ \pm 1\} )_{\overline{\F}_p }$. And indeed, the Galois group of $W'$ over $W$ acts
trivially on each of these two parts because, as noted at the end of Section~\ref{Inertia}, the group $\mu_{p+1}(\F_{p^2})$ acts trivially on
the~${\mathrm{Ig}}_{i,P}$.  That is for the first part of the Theorem.

Let us deal with the Drinfeld components. Recall that an equation for them in the bad fiber of $\overline{{\cal M}} ({\cal P}, \Gamma (p) )$  is given by
\begin{eqnarray}
\label{4.3.2}
-a =\alpha \beta^p -   \alpha^p \beta
\end{eqnarray}
for some $a$ in $\overline{\F}_p^*$ (cf. Section~\ref{X(p)}, (\ref{modelDis})).  Equations~(\ref{gonalpha}) and~(\ref{gammabarDis}) show that the
elements denoted~$-1$ in $\Gamma_{\mathrm{ns}} (p)$ and in $\F_{p^2}^*$ both act as $\alpha\mapsto -\alpha$ and $\beta\mapsto -\beta$. So, indeed, the
Galois group of $W'$ over $W$ acts trivially on the quotient by~$\Gamma_{\mathrm{ns}} (p)$.
To be completely explicit we choose some multiplicative generator $\kappa$ of $\F_{p^2}^*$ and pick $\lambda :=\kappa^{p-1}$ as a generator of the cyclic 
subgroup of elements with norm $1$ within $\F_{p^2}^*$. That subgroup is precisely the stabilizer in $\Gamma_{\mathrm{ns}} (p) $ of any Drinfeld component~$D_{i,s}$ 
by Section~\ref{GL2}. Set now $P_1 :=\lambda e_1 +\lambda^p e_2$, $P_2 :=\lambda^p e_1 +{\lambda} e_2$, for $(e_1, e_2 )$ the canonical basis, say, 
of $\F_{p}^2$. We can choose $\Gamma_{\mathrm{ns}} (p) $ so that it acts diagonally on this basis $(P_1 ,P_2 )$, that is, $\Gamma_{\mathrm{ns}} (p)$
can be written as  $\{ \left(
\begin{array}{cc}
a^p & 0\\
0 & a
\end{array}
\right) ,
a\in \F_{p^2}^* \}$ with respect to $(P_1 ,P_2 )$.
We perform the change of coordinates
\begin{eqnarray}
\label{coordinates}
\tilde{\alpha} :=\lambda \alpha + \lambda^p \beta \hspace{0.4cm} \mathrm{and} \hspace{0.4cm} \tilde{\beta} :=\lambda^p \alpha +\lambda \beta .
\end{eqnarray}
Then if $N:=(\lambda^{-2} -\lambda^2 )$ one has
\begin{eqnarray}
\label{inversecoordinates}
{\alpha} =\frac{1}{N} (-\lambda \tilde{\alpha} + \lambda^p \tilde{\beta} ) \hspace{0.4cm} \mathrm{and} \hspace{0.4cm} \beta =\frac{1}{N} ( \lambda^p \tilde{\alpha} -\lambda \tilde{\beta} )
\end{eqnarray}
from which equation~(\ref{4.3.2}) becomes
\begin{eqnarray}
\label{4.3.3}
a N={\tilde{\alpha}}^{p+1} -  {\tilde{\beta}}^{p+1}  .
\end{eqnarray}
Now
% 
%so 
coordinates for the quotient curve $D_s :=D_{i,s} /(\Gamma_{\mathrm{ns}} (p)  \cap \SL_2 (\F_p ))$ are 
\begin{eqnarray}
\label{u1v1}
\left\{
\begin{array}{l}
u_1 :=\tilde{\alpha}^{p+1} \\
v_1 :=\tilde{\alpha} \tilde{\beta}
\end{array}
\right.
\end{eqnarray}
(they are indeed stable under the action of $\Gamma_{\mathrm{ns}} (p)  \cap \SL_2 (\F_p )$, and the corresponding morphism of curves has % requisite 
due degree $p+1$) so an equation for $D_s$ is
\begin{eqnarray}
\label{4.3.4}
u_1^2 -v_1^{p+1} -a N u_1=0 
\end{eqnarray}
or, setting $U:=u_1 -\frac{aN}{2}$ and $V:=v_1$,
\begin{eqnarray}
\label{4.3.5}
U^2 =V^{p+1} +\left( \frac{aN}{2} \right)^2  
\end{eqnarray}
which gives our hyperelliptic model for $D_s$. 

\medskip

Finally, the assertion on the completed local rings at the singularities in the special fiber follows for instance from~\cite{Liu02}, Chapter~10.3,
Proposition~3.48, combined with the fact that the semistable model of $\overline{{\cal M}} ({\cal P}, \Gamma_{\mathrm{ns}} (p))$ over $W'$ is the
pullback of the semistable model over~$W$.~$\Box$
%. 
\end{proof}
\medskip

\subsection{Stable model for $\overline{{\cal M}} ({\cal P}, \Gamma_{\mathrm{ns}}^+ (p))$}

Now for curves associated with the {\it normalizer} $\Gamma_{\mathrm{ns}}^+ (p)$ of $\Gamma_{\mathrm{ns}} (p)$.

\medskip

\begin{theo} 
\label{NS+} 
Let $p>3$ be a prime, and let $[\Gamma_{\mathrm{ns}}^+  (p)]$ be the moduli problem over $\Z [1/p]$ associated with $\Gamma_{\mathrm{ns}}^+ (p))$.
Let ${\cal P}$ be as in~Theorem~\ref{NS}, and let
$\overline{{\cal M}} ({\cal P}, \Gamma_{\mathrm{ns}}^+ (p)) =\overline{{\cal M}} ({\cal P}, \Gamma (p))/\Gamma_{\mathrm{ns}}^+ (p)$
be the corresponding compactified fine moduli space. We denote, as in~Theorem~\ref{NS}, by ${\cal S}_{\cal P}$ the number of supersingular points of
$\overline{{\cal M}} ({\cal P}) (\overline{\F}_p)$, and by $W$ a totally ramified extension of $\Z_p^{\mathrm{ur}}$ of degree $(p^2 -1)/2$. 

\medskip

   If $p\equiv 1$ {\rm mod} $4$, then $\overline{{\cal M}} ({\cal P}, \Gamma_{\mathrm{ns}}^+ (p))$ has a semistable model over $W$ whose special fiber is made of two vertical
parts, which are both isomorphic to the Igusa curve $\overline{\cal M} ({\cal P}, {\mathrm{Ig}} (p)/C_4 )_{\overline{\F}_p}$, where $C_4$ denotes the cyclic subgroup 
of order $4$ in $\F_p^*$. Those two parts are linked above each supersingular points of  $\overline{\cal M} ({\cal P})$ by horizontal Drinfeld components. 

\medskip

   If $p\equiv -1$ {\rm mod} $4$, then $\overline{{\cal M}} ({\cal P}, \Gamma_{\mathrm{ns}}^+ (p))$ has a semistable model over $W$ whose special fiber is made of only one 
vertical part, which is isomorphic to the enhanced Igusa curve $\overline{\cal M} ({\cal P}, {\mathrm{Ig}} (p)/\{\pm 1\} )_{\overline{\F}_p}$. That 
vertical part is crossed at all ${\cal S}_{\cal P}$ supersingular points  by a horizontal component.

\medskip 
 
   Wether $p$ is $1$ or $-1$ {\rm mod} $4$, the ${\cal S}_{\cal P}$ horizontal components of the special fiber are copies of some hyperelliptic curve $D^+$ for which 
an affine model is given by
\begin{eqnarray}
\label{DrDinfeldXns0+}
Y^2 =X(X^{\frac{p+1}{2}}  +A_{\mathrm{ns}}  )
\end{eqnarray}
for $A_{\mathrm{ns}} $ in $\overline{\F}_p^*$.\medskip
   
The singular points in the special fiber have local equations either $W[[x,y]]/(xy-\pi_0 )$, if $p\equiv -1 \mod 4$, or $W[[x,y]]/(xy-\pi_0^2 )$ if $p\equiv 1\mod 4$, for $\pi_0$ a 
uniformizer of $W$.

\end{theo}

\medskip

The same caveat as in Remark~\ref{bisrepetitas} (regarding irreducibility of the vertical Igusa parts) is in order here.

\medskip

As before, for a picture of the curve we refer to Figure~\ref{figureNS+}, representing the coarse quotient $X_{\mathrm{ns}} (p)^+$.

\begin{proof}

Use notations as in the above proof of Theorem~\ref{NS}:
our basis $(P_1 ,P_2 )$ of $\F_{p^2}^2$ made of two $\F_{p^2} /\F_p$-conjugate vectors is such that 
$\Gamma_{\mathrm{ns}} (p)  =\{ \left(
\begin{array}{cc}
a^p & 0\\
0 & a
\end{array}
\right) ,
a\in \F_{p^2}^* \}$ with respect to the basis $(P_1 ,P_2 )$. Then the normalizer of $\Gamma_{\mathrm{ns}} (p) $ deprived from  $\Gamma_{\mathrm{ns}} (p) $ is made of all elements 
$$
w_r :=\left(
\begin{array}{cc}
0 & r\\
r^p & 0
\end{array}
\right) 
$$
for $r$ running through $\F_{p^2}^*$. The element $w_1$ leaves stable the $\F_p$-line spanned by $(P_1 +P_2 )$. Up to changing choices, one can assume
that is the line $(1\colon 0)$ chosen in our representative of $(\F_p^* \times \PPP^1 (\F_p ) )/\Gamma_{\mathrm{ns}} (p) $. Therefore $w_1$ maps $(x,(1\colon 0))$ 
to $(-x,(1\colon 0))$, so that it exchanges the two orbits corresponding to our Igusa parts if and only if $-1$ is a non-square in $\F_p^*$. %  

\medskip

  Now for the Drinfeld components. One needs to compute the action of $w_r$  for $r$ satisfying $r^{p+1} =-1$. With notations as in~(\ref{u1v1}), one checks that, independently of $r$ and because
of~(\ref{4.3.4}): 
$$
\left\{
\begin{array}{lll}
u_1 \cdot w_r &=&\tilde{\alpha}^{p+1} \cdot w_r =r^{(p+1)p} \tilde{\beta}^{p+1} =- \tilde{\beta}^{p+1} =-\frac{v_1^{p+1}}{u_1} =-u_1 +aN  \hspace{0.5cm} \mathrm{and} \\
v_1 \cdot w_r &=&(r^p \tilde{\beta} )(r \tilde{\alpha} ) =- \tilde{\alpha} \tilde{\beta} =-v_1 % \hspace{0.2cm} {\mathrm{oui.}}
\end{array}
\right.
$$
so that $U= (u_1 -\frac{aN}{2} )$ and $V=v_1$ are mapped to their opposite. Therefore 
\begin{eqnarray}
\label{x1y1}
\left\{
\begin{array}{l}
X := v_1^2 =V^2 \hspace{0.8cm} \mathrm{and} \\
Y :=(u_1 -\frac{aN}{2} )\times  v_1  =U\times V
\end{array}
\right.
\end{eqnarray}
give coordinates for the image of any Drinfeld component in our ${\overline{\cal M}} ({\cal P}, \Gamma_{\mathrm{ns}}^+ (p))^{\mathrm{st}}_{\overline{\F}_p}$. 
From~(\ref{4.3.4}) we then check that a singular model  for any Drinfeld component  can now be given the equation
\begin{eqnarray*}
% \label{4.3.6}
Y^2 = 
X(X^{\frac{p+1}{2}} +A_{\mathrm{ns}} ).%  
\end{eqnarray*}

%$

\medskip

  The proof of the equations of singularities in the special fiber are straightforward and similar to that of Theorem~\ref{NS}. $\Box$

\end{proof}

\subsection{Stable model for ${X}_{\mathrm{ns}} (p)_\Q$} 
\label{Xns}

%%%%%%%%%%%%%%%%%%%%%%%%%%%%%%%%%%%%%%%%%%%%%%%%
%%%%%%%%%%%%%%%%%%%%%%%%%%%%%%%%%%%%%%%%%%%

\begin{figure}
% \label{figureNS}
\begin{center}
\begin{picture}(320,200)(-30,-80)

%%%%%%%%%%%%%%%%%%%%%%%%%%

\qbezier(-20,55)(135,85)(235,50)
\put(-40,55){$D_s$}

\qbezier(-20,35)(165,55)(235,20)
\put(-40,35){$D_s$}

\qbezier(-20,-45)(115,-35)(235,-70)
\put(-40,-45){$D_s$}

%%%%%%%%%%%%%%%%%%%%%%%%%%

\qbezier(60,105)(65,0)(45,-70)
\put(35,-90){${\mathrm{Ig}} (p)_1$}

\qbezier(140,105)(135,0)(155,-70) 
 \put(147,-90){${\mathrm{Ig}} (p)_d$}

%%%%%%%%%%%%%%%%%%%%%%

\put(40,27){$\cdot$}
\put(40,5){$\cdot$}
\put(40,-17){$\cdot$}
\put(40,-36){$\cdot$}

\put(160,27){$\cdot$}
\put(160,5){$\cdot$}
\put(160,-17){$\cdot$}
\put(160,-36){$\cdot$}

%%%%%%%%%%%%%%%%%%%%%

\put(270,-35){$g(X_0 (p)) +1$} 
\put(275,-50){copies}

\put(265,-22){\vector(-1,3){20}}

\put(265,-28){\vector(-1,2){17}}

\put(265,-32){\vector(-2,-3){20}}

%%%%%%%%%%%%%%%%%%%%%

%\
\end{picture}
\end{center}
\caption{Special fiber of ${X}_{\mathrm{ns}} (p)_{\overline{\F}_{p}}^{\mathrm{st}}$}
% \label{faut}
\label{figureNS}
\end{figure}

%%%%%%%%%%%%%%%%%%%%%%%%%%%%%%%%%%%%%%%%%%%%%%%

Now we deal with the case of pure level $p$ non-split Cartan. We therefore assume the additional level structure ${\cal P}$
is Galois, and take the quotient of our fine modular curves by its Galois group to produce the desired coarse moduli spaces.

\begin{theo} 
\label{NS0} 
For $p>3$ a prime, let $X_{\mathrm{ns}} (p)$ be the modular curve associated with a non-split Cartan subgroup in level $p$. Let ${\cal S}=g(X_0 (p)) +1$ be the 
number of supersingular $j$-invariants in $\F_{p^2}$, where $g(X_0 (p))$ is the genus of $X_0 (p)$. Let $W$ be a totally ramified extension of $\Z_p^{\mathrm{ur}}$ of 
degree $(p^2 -1)/2$, as in Theorem~\ref{NS}. Then $X_{\mathrm{ns}} (p)$ has a semistable model over $W$ whose special fiber is made of two vertical irreducible 
components, which are linked at supersingular points by ${\cal S}$ horizontal components, cf. Figure~\ref{figureNS}. The toric part of its jacobian therefore has 
dimension ${\cal S}-1=g(X_0 (p))$.
 
\medskip 
 
 Both vertical irreducible components, call them ${\mathrm{Ig}} (p)_1$ and ${\mathrm{Ig}} (p)_d$, are isomorphic to the coarse Igusa 
 curve $\overline{M} ({\mathrm{Ig}} (p)/\{ \pm 1\} )_{\overline{\F}_p} )$.

\medskip

  The ${\cal S}$ horizontal (Drinfeld) components are all hyperelliptic smooth curve $D_s$ for which an affine model  is given by 
\begin{eqnarray}
\label{DrinfeldXns}
U^2 =V^{\frac{p+1}{e(s)}} +A_{\mathrm{ns}} 
\end{eqnarray}
for some $A_{\mathrm{ns}} $ in $\overline{\F}_p^*$, and $e(s)$ is the order of the geometric automorphism group ${\mathrm{Aut}}_{\overline{\F}_p} (s)/\{\pm 1\}$
(which we recall to be $1$ except when the $j$ -invariant at $s$ is $j\equiv 1728$ or $0$ {\rm mod} $p$, where $e=2$ or $3$ respectively).

\medskip

  The singular points have local equations $W[[x,y]]/(xy-\pi_0^{e(s)} )$, for $\pi_0$ a uniformizer of $W$. 
  %(e.g. $\pi_0 =p^{2/(p^2 -1)}$). 

%    
%$$
%
\end{theo}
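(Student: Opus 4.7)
The plan is to descend the semistable model of $\overline{\cal M}({\cal P}, \Gamma_{\mathrm{ns}}(p))$ constructed in Theorem~\ref{NS} to $X_{\mathrm{ns}}(p)$ by quotienting out by the Galois group $G$ of ${\cal P}$ over $({\mathrm{Ell}})_{\Z_p}$. By construction $X_{\mathrm{ns}}(p)_\Q = \overline{\cal M}({\cal P}, \Gamma_{\mathrm{ns}}(p))_\Q/G$, and $G$ acts freely on the generic fibre since ${\cal P}$ is rigidifying; in the tame situation at hand, the quotient of a semistable curve by such a finite group action is again semistable (after blowing-up isolated fixed points and normalizing if needed), yielding a semistable model for $X_{\mathrm{ns}}(p)$ over $W$.

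The two vertical parts $\mathrm{Ig}(p, {\cal P})_1$ and $\mathrm{Ig}(p, {\cal P})_d$ of Theorem~\ref{NS} are both isomorphic to $\overline{\cal M}({\cal P}, \mathrm{Ig}(p)/\{\pm 1\})_{\overline{\F}_p}$, and their quotients by $G$ are by definition the two copies of the coarse Igusa curve $\overline{M}(\mathrm{Ig}(p)/\{\pm 1\})_{\overline{\F}_p}$ required by the statement. For the horizontal components, the fibre in $\overline{\cal M}({\cal P})(\overline{\F}_p)$ above a supersingular $j$-invariant $\bar s \in \overline{\F}_p$ has cardinality $|G|/e(\bar s)$, because the stabilizer in $G$ of any point above $\bar s$ is canonically identified with $\mathrm{Aut}_{\overline{\F}_p}(E_{\bar s})/\{\pm 1\}$, cyclic of order $e(\bar s) \in \{1, 2, 3\}$. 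Hence the $s_{\cal P}$ Drinfeld components of Theorem~\ref{NS} fall into $s = g(X_0(p)) + 1$ orbits (the Deuring-Eichler count of supersingular $j$-invariants mod $p$), and it remains to quotient a single component $D \simeq \{U^2 = V^{p+1} + A_{\mathrm{ns}}\}$ by its stabilizer.

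From the explicit universal deformations of Section~\ref{0-1728}, a generator $\rho$ of that stabilizer acts on the Drinfeld parameters $(\alpha, \beta)$ by multiplication by a common primitive $2e(\bar s)$-th root of unity ($i$ if $j = 1728$, which requires $p \equiv 3 \bmod 4$; $\zeta$ if $j = 0$, requiring $p \equiv 5 \bmod 6$). Tracing this through the coordinate changes~(\ref{coordinates}), (\ref{u1v1}) and~(\ref{x1y1}), the divisibility of $p+1$ by the order of $\rho$ on $\tilde{\alpha}, \tilde{\beta}$ gives $U \mapsto U$, while $V \mapsto \eta V$ for $\eta$ a primitive $e(\bar s)$-th root of unity. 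Setting $V' := V^{e(\bar s)}$ one reads off the advertised equation $U^2 = {V'}^{(p+1)/e(\bar s)} + A_{\mathrm{ns}}$ on the quotient Drinfeld component.

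For the widths, at a node of $\overline{\cal M}({\cal P}, \Gamma_{\mathrm{ns}}(p))^{\mathrm{st}}$ above a supersingular $j$-invariant with automorphism invariant $e(\bar s)$, the local ring is $W[[x,y]]/(xy - \pi^2)$ by Theorem~\ref{NS}, with $x$ (resp.\ $y$) a parameter on the Igusa (resp.\ Drinfeld) branch. The stabilizer acts on $y$ through its action on $V$, namely $y \mapsto \eta y$, and preservation of the equation $xy - \pi^2$ (where $\pi \in W$ is fixed) forces $x \mapsto \eta^{-1} x$. The ring of invariants is then $W[[X, Y]]/(XY - \pi^{2e(\bar s)})$ with $X := x^{e(\bar s)}$ and $Y := y^{e(\bar s)}$, which gives the claimed local model. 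The toric dimension of the Jacobian follows by computing the first Betti number of the dual graph, which has two vertices joined by $s$ edges, namely $s - 1 = g(X_0(p))$. The delicate step of the plan is paragraph~3: the bookkeeping of how $\mathrm{Aut}(E_{\bar s})/\{\pm 1\}$ propagates through the successive changes of coordinates, and the verification that at a node it acts on the two local tangents by inverse characters, both reducing to careful but finite checks using the formulas of Section~\ref{0-1728}.
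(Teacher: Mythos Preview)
Your approach is essentially the same as the paper's: take the quotient of the semistable model of Theorem~\ref{NS} by the Galois group $G$ of ${\cal P}$, and track the action of the extra automorphisms at $j=0$ and $j=1728$ through the coordinate changes using Section~\ref{0-1728}. You supply considerably more detail than the paper's own proof, which is terse on the node widths and the toric rank; your treatment of those points is correct. One minor slip: the reference to~(\ref{x1y1}) is spurious, as those are the $X,Y$ coordinates for the normalizer case $\Gamma_{\mathrm{ns}}^+(p)$; for the present theorem the relevant coordinates are $U=u_1-aN/2$ and $V=v_1$ as defined between~(\ref{4.3.4}) and~(\ref{4.3.5}), and your computation indeed uses those.
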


%%%%%%%%%%%%%%%%%%%%%%%%%%%%%%%%%%%%%%%%%%%%%%% 
%%%%%%%%%%%%%%%%%%%%%%%%%%%%%%%%%%%%%%%%%%%

\begin{proof}
After Theorem~\ref{NS}, what remains to do is, assuming ${\cal P}$ is Galois with Galois group $G$, to
take the quotient of $\overline{\cal M} ({\cal P}, \Gamma_{ns}) (p)^{\mathrm{st}}$ by $G$.
The stabilizers in $G$ have order $1, 2, 3, 4$ or $6$, hence are prime to $p$, so the only thing to watch out is what happens on the locus 
of extra-automorphisms,
that is, on Drinfeld components associated with supersingular $j$ invariants equal to $1728$ or $0$. It then follows from Section~\ref{0-1728} that the
exceptional automorphism $[i]$ (respectively, $[\zeta ]$) maps the parameters $\alpha$ and $\beta$ to $i\alpha$ and $i\beta$ (respectively,
$\zeta \alpha$ and $\zeta \beta$). Keeping track of those transformations through the computations of equations~(\ref{4.3.2}) to~(\ref{4.3.5})
shows that equations for the relevant quotients Drinfeld components have shape as given in~(\ref{DrinfeldXns}).   $\Box$
\end{proof}

\subsection{Semistable model for ${X}_{\mathrm{ns}}^+ (p)_{\Q}$}
\label{Xns+}

\begin{figure}
% \label{figureNS+}
\begin{center}
\begin{picture}(320,200)(-30,-80)

 %%%%%%%%%%%%%%%%%%%%%%%%%%

\qbezier(-58,55)(5,85)(95,50)
\put(-73,55){$D_s^+$}

\qbezier(-58,35)(5,55)(95,20)
\put(-73,35){$D_s^+$}

\qbezier(-58,-45)(5,-35)(95,-70)
\put(-74,-45){$D_s^+$}

%%%%%%%%%%%%%%%%%%%%%%%%%%

\qbezier(-40,105)(-35,0)(-45,-70)
% \put(-65,-90){${\mathrm{Ig}} (p)_1$}

\qbezier(40,105)(35,0)(55,-70) 
%   \put(47,-90){${\mathrm{Ig}} (p)_d$}

%%%%%%%%%%%%%%%%%%%%%%

\put(-20,-85){$p\equiv 1$ {\rm mod} $4$}

%%%%%%%%%%%%%%%%%%%%%%

\put(-50,27){$\cdot$}
\put(-50,5){$\cdot$}
\put(-50,-17){$\cdot$}
\put(-50,-36){$\cdot$}

\put(50,27){$\cdot$}
\put(55,5){$\cdot$}
\put(57,-17){$\cdot$}
\put(60,-36){$\cdot$}

%%%%%%%%%%%%%%%%%%%%%

\put(125,-35){$g(X_0 (p)) +1$} 
\put(136,-50){copies}

\put(120,-22){\vector(-1,3){20}}
 
\put(120,-28){\vector(-1,2){19}}

\put(120,-32){\vector(-2,-3){20}}

%%%%%%%%%%%%%%%%%%%%%

\put(185,-16){\vector(1,4){18}}
 
\put(187,-25){\vector(1,3){19}}

\put(189,-32){\vector(2,-1){20}}

%%%%%%%%%%%%%%%%%%%%%
%%%%%%%%%%%%%%%%%%%%%%%

\qbezier(210,58)(245,65)(295,50)
\put(305,50){$D_s^+$}

\qbezier(212,35)(245,45)(295,30)
\put(305,30){$D_s^+$}

\qbezier(218,-45)(245,-35)(295,-45)
\put(305,-50){$D_s^+$}

\put(222,27){$\cdot$}
\put(221,5){$\cdot$}
\put(222,-17){$\cdot$}
\put(224,-36){$\cdot$}

%%%%%%%%%%%%%%%%%%%%%%%%%%

\qbezier(240,105)(235,0)(245,-70)
% 

%%%%%%%%%%%%%%%%%%%%%%

\put(215,-85){$p\equiv -1$ {\rm mod} $4$}

%%%%%%%%%%%%%%%%%%%%%%

%\
\end{picture}
\end{center}
\caption{Special fiber on $\overline{\F}_{p}$ of the {\em semistable} model ${X}_{\mathrm{ns}}^+ (p)$, depending on $p\equiv \pm1$ {\rm mod} $4$}
\label{figureNS+}
\end{figure}

%%%%%%%%%%%%%%%%%%%%%%%%%%%%%%%%%%%%%%%%%%%%%%%
%%%%%%%%%%%%%%%%%%%%%%%%%%%%%%%%%%%%%%%%%%%

\begin{theo}  
\label{NS0+}

Let $p>3$ be a prime, and keep same notations as in~Theorem~\ref{NS0}. Let $w$ be the involution of the curve $X_{\mathrm{ns}} (p)$ associated 
with the quotient of the normalizer of the non-split Cartan subgroup by the Cartan itself, and $X_{\mathrm{ns}}^+ (p) :=X_{\mathrm{ns}} (p)/w$ the 
quotient curve. 
%, 
 
   Then in the special fiber of the stable model $X_{\mathrm{ns}} (p)^{\mathrm{st}}$ given in Theorem~\ref{NS0}, 
$w$ fixes horizontal components, and it switches the two vertical ones if and only if $p\equiv -1$ {\rm mod} $4$. The dual graph of its special fiber is therefore topologically 
the same as that of $X_{\mathrm{ns}} (p)$ if $p\equiv 1$ {\rm mod} $4$, or has trivial homology if $p\equiv -1$ {\rm mod} $4$, cf. Figure~\ref{figureNS+}. 
The vertical components are either both isomorphic to the Igusa curve $\overline{M} ({\mathrm{Ig}} (p)/C_4 )_{\overline{\F}_p}$ (where $C_4$ denotes the cyclic 
subgroup of order $4$ in $\F_p^*$), in case $p\equiv 1$ {\rm mod} $4$, or, if $p\equiv -1$ {\rm mod} $4$, is isomorphic to $\overline{M} ({\mathrm{Ig}} (p)/
\{ \pm 1\} )_{\overline{\F}_p} $. The toric rank is precisely 
\begin{itemize}
\item $t=\frac{p-13}{12}$ if $p\equiv 1$ {\rm mod} $12$;
\item  $t=\frac{p-5}{12}$ if $p\equiv  5$ {\rm mod} $12$;
\item  else $t=0$. 
\end{itemize}

\medskip

   The ${\cal S}=g(X_0 (p))+1$ horizontal components are hyperelliptic curve $D_s^+$ for which an affine model is given,
if the supersingular $j$-invariant attached to $D^+_s$ is neither $0$ or $1728$, by
\begin{eqnarray}
\label{DrDinfeldXns+}
Y^2 =X(X^{\frac{p+1}{2}}  +A_{\mathrm{ns}}  )
\end{eqnarray}
for some $A_{\mathrm{ns}} $ in $\overline{\F}_p^*$. In the case the $j$-invariant is $0$, $D_{0}^+$ has a model
\begin{eqnarray}
\label{DrDinfeldXns00+}
Y^2 =X(X^{\frac{p+1}{6}}  +A_{\mathrm{ns}}  ).
\end{eqnarray}
If the supersingular $j$-invariant is $1728$, $D_{1728}^+$ is just a projective line ${\PPP^1}_{\overline{\F}_p}$.  
   
\medskip
   
The singular points in the special fiber have local rings $W[[x,y]]/(xy-\pi_0^{e(s)} )$ if $p\equiv -1$ {\rm mod} $4$, else they are $W[[x,y]]/(xy-\pi_0^{2e(s)} )$,
where $e(s)$ denotes as usual the order of the geometric automorphism group ${\mathrm{Aut}}_{\overline{\F}_p} (s)/\{\pm 1\}$,
and $\pi_0$ is a uniformizer of $W$ (e.g. $(1-\zeta_p )^{2/(p +1)}$).

\end{theo} 
\begin{rema}
\label{seulementsemi}
{\rm Note that in the cases where projective lines are showing-up as Drinfeld components, the model we obtain is only semistable, and 
needs contracting the only rational curve in order to become stable.}
\end{rema}

\begin{proof}
We use Theorem~\ref{NS+}, applying similar arguments as in the proof of Theorem~\ref{NS0}. Again the only delicate point is to follow the effect of
exceptional automorphisms on relevant Drinfeld components, associated with some supersingular elliptic curve $E_0$. So write
$\zeta_n$ ($n=4$ or $6$) for our generator of $\mu_n (\F_{p^2}) ={\mathrm{Aut}}_{\F_{p^2}} (E_0 )$. Keeping track of the action of $\zeta_n$ 
on parameters $\alpha$, $\beta$ as given in Section~\ref{0-1728}, and the subsequent parameters given in~(\ref{x1y1}), one sees that
$[\zeta_n ]$ maps $X$ to $\zeta_n^4 X$ and $Y$ to $\zeta_n^2 Y$. (One readily checks that equations~(\ref{DrDinfeldXns0+}) are preserved.) 
In the case $n=6$, parameters for the quotient Drinfeld component by the action of $[\zeta_6 ]$ are clearly ${\cal X}:=X^3$ and ${\cal Y}:=XY$, so one deduces
from equation~(\ref{DrDinfeldXns0+}): $Y^2 =X(X^{(p+1)/2} +A_{\mathrm{ns}})$ that a model for that quotient Drinfeld component is ${\cal Y}^2 ={\cal X} ({\cal X}^{(p+1)/6} 
+A_{\mathrm{ns}})$. When $n=4$ however, parameters for the quotient Drinfeld component by the action of $[i]$ are ${\cal X}:=X$ and ${\cal Y}:=Y^2$. 
From~(\ref{DrDinfeldXns0+}) we therefore see that taking quotient by the action of~$[i]$ gives the projective line, $[i]$ being the hyperelliptic involution of $D_{1728}$. $\Box$  
\end{proof}

\begin{coro}
Let $p=11$ or $p>13$, and let $J=\mathrm{Jac} (X_{\mathrm{ns}}^+ (p) )$ be the jacobian of $X_{\mathrm{ns}}^+ (p)$ over the fraction field
of a totally ramified extension $W$ of $\Z_p^{\mathrm{ur}}$ of degree $(p^2 -1)/2$. Set $n:={\mathrm{num}} ((p-1)/12)$ and 
let ${\cal S}:=g(X_0 (p)) +1$ be the number of supersingular points in characteristic $p$. Then the 
N\'eron model of $J$ over $W$ has a component group at the special fiber which is isomorphic to 
$$
\left(  \Z /4n\Z \right) \times \left(  \Z /4\Z \right)^{{\cal S}-2}
$$
if $p\equiv 1\mod 4$, and is trivial if $p\equiv -1 \mod 4$.
\end{coro}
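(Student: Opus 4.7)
The approach is to apply Raynaud's uniformization theorem: for the Jacobian~$J$ of a curve over a discretely valued field with semistable reduction, the component group~$\Phi$ at the special fiber of the N\'eron model is canonically the cokernel of the monodromy pairing $u\colon H_1(\Gamma,\Z)\to H_1(\Gamma,\Z)^\vee$, where $\Gamma$ is the weighted dual graph of the special fiber of a semistable model, each edge $e$ weighted by the thickness $\ell_e\in\Z_{>0}$ of the corresponding node. Explicitly, $\langle\gamma,\gamma'\rangle=\sum_e\ell_e\,\gamma(e)\gamma'(e)$. The graph $\Gamma$ and the weights $\ell_e$ have been determined in Theorem~\ref{NS0+}.

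If $p\equiv -1\pmod 4$, Theorem~\ref{NS0+} presents $\Gamma$ as a star: one vertical vertex joined to each of the $S$ horizontal vertices $D_1,\dots,D_S$ by a single edge. Being a tree, this graph has $H_1(\Gamma,\Z)=0$, and hence $\Phi=0$.

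If $p\equiv 1\pmod 4$, $\Gamma$ has vertex set $\{V_1,V_2,D_1,\dots,D_S\}$ and, for each pair $(j,i)\in\{1,2\}\times\{1,\dots,S\}$, one edge $e_{j,i}$ joining $V_j$ to $D_i$ of thickness $\ell_i:=4e(s_i)$. Since $p\not\equiv 3\pmod 4$, no supersingular $j$-invariant equals $1728$, so $e(s_i)\in\{1,3\}$; the value $3$ is attained precisely at the unique index with $s_i=0$, which occurs iff $p\equiv 5\pmod{12}$. I select the spanning tree $T:=\{e_{1,i}\}_{i=1}^S\cup\{e_{2,1}\}$, indexing so that $s_1$ is the exceptional point if any. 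The fundamental cycles $\gamma_i:=e_{2,i}-e_{1,i}+e_{1,1}-e_{2,1}$ for $2\le i\le S$ form a $\Z$-basis of $H_1(\Gamma,\Z)$, and a direct computation yields the Gram matrix
\[
M=\bigl(2\ell_1+2\delta_{ij}\ell_i\bigr)_{i,j=2}^{S}=
\begin{cases}
8(I+J) & \text{if }p\equiv 1\pmod{12},\\
8(I+3J) & \text{if }p\equiv 5\pmod{12},
\end{cases}
\]
where $I$ and $J$ denote the identity and all-ones $(S-1)\times(S-1)$ matrices.

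Finally, the Smith normal form of $I+aJ$ (for any $a\in\Z_{\ge 1}$) equals $\mathrm{diag}(1,\dots,1,1+a(S-1))$, so
\[
\Phi\cong(\Z/8\Z)^{S-2}\oplus\Z/8\bigl(1+a(S-1)\bigr)\Z.
\]
The identification $1+a(S-1)=n$ follows from the standard count of supersingular invariants: when $p\equiv 1\pmod{12}$, $S=(p-1)/12$ so $1+(S-1)=(p-1)/12=n$; when $p\equiv 5\pmod{12}$, $S=(p+7)/12$ so $1+3(S-1)=(p-1)/4=n$. The main technical point to check is that Raynaud's formula applies to the semistable-but-not-regular model of Theorem~\ref{NS0+}: this is standard, as iteratively blowing up each thick node into a chain of $\PPP^1$'s produces a regular model with the same weighted dual graph and hence the same monodromy pairing, so no extra input beyond Theorem~\ref{NS0+} is required.
\qed
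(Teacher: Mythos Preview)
Your proof is correct. The approach differs from the paper's, which is much terser: the paper observes that for $p\equiv 1\pmod 4$ the metrized dual graph of $X_{\mathrm{ns}}^+(p)$ coincides with that of $X_0(p)$ over a totally ramified degree-$8$ extension of~$\Q_p$ (indeed, contracting each valence-$2$ Drinfeld vertex $D_i$ replaces the two incident edges of length $4e(s_i)$ by a single edge of length $8e(s_i)$ joining the two Igusa vertices, and such contractions do not affect the monodromy pairing), and then cites the known component-group formula for~$J_0(p)$ from~\cite{LF16}, Proposition~2.11. Your route is more self-contained: you write down the Gram matrix explicitly and compute its Smith normal form by hand. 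The paper's approach is more conceptual and explains \emph{why} the answer looks like the $J_0(p)$ answer times~$8$; yours has the advantage of requiring no external reference and makes the role of the exceptional thickness at $j=0$ (when $p\equiv 5\pmod{12}$) completely transparent in the matrix $8(I+3J)$.
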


\begin{proof}
This follows from Theorem~\ref{NS0+} together with classical results of Raynaud (which describe the component group of the Jacobian in terms of
the intersection matrix of the special fiber of the curve, see Theorem~9.6.1 of~\cite{BLR}).  Notice that for $p\equiv 1 \mod 4$ our theorem
shows that the metrized dual graph of $X_{\mathrm{ns}}^+ (p)$ above $p$ is exactly the same as that 
of $X_0 (p)$ at the special fiber of some totally ramified extension of $\Q_p$ with degree $4$; then one can use for instance~\cite{LF16}, 
Proposition~2.11. Of course one could similarly write the component group over any (ramified) extension of ${\mathrm{Frac}} (W)$.~$\Box$
\end{proof}

\begin{rema}
{\rm 
As a safety check one can compute that, assuming $p\equiv 5$ {\rm mod} $12$ to fix ideas, 
the genus of the Igusa components $\overline{M} ({\mathrm{Ig}} (p)/C_4 )_{\overline{\F}_p}$
is $(p-5)(p-17)/96$, that of the $(p+7)/12$ Drinfeld components is $(p-1)/4$ for all but the $j=0$ one, for which
it is $(p-5)/12$, and the toric rank is $(p-5)/12$ too. The total genus therefore sums up to  
$$
2\, \frac{(p-5)(p-17)}{96} +( \frac{p+7}{12} -1) \cdot \frac{p-1}{4} +\frac{p-5}{12} +\frac{p-5}{12} =\frac{(p-5)^2}{24}
$$  
which indeed is the known genus of $X_{\mathrm{ns}}^+ (p)$ as a Riemann surface (check for instance~\cite{Ma77}  p.~117).

\medskip

Over $\Z [1/p]$, a nice modular interpretation of $X_{\mathrm{ns}}^+ (p)$ has been given in~\cite{RW17}. It is however hard to see what 
survives of it here above $p$. 

\medskip

We remark that when $p\equiv -1$ {\rm mod} $4$, the N\'eron model of the jacobian of $X_{\mathrm{ns}}^+ (p)$ over $W$ gives an example of an abelian
scheme which yet has ``bad reduction'' above $p$ as a polarized abelian variety, in the sense that it decomposes as the product
of abelian varieties with the induced (reducible) product polarization.

\medskip

Note also that we could have derived the mere toric (and abelian) dimensions of stable models for $X_{\mathrm{ns}}^+ (p)$ from the corresponding
description for $X_{\mathrm{s}}^+ (p)$ as recalled in next section, using Chen isogeny between ${\mathrm{Jac}} 
(X_{\mathrm{s}}^+ (p))^{p-{\mathrm{new}}}$ and ${\mathrm{Jac}} (X_{\mathrm{ns}}^+ (p))$ (cf.~\cite{dSE}, \cite{IC}). We come-back to that 
point in Remark~\ref{level13} below.} %,   
\end{rema}

\section{Split Cartan structures}
\label{SplitCartan}

In this section we describe the bad fibers of $X_{\mathrm{s}} (p)$ and $X_{\mathrm{s}}^+ (p)$, following the same paths as for the non-split Cartan cases. 
Recall that those models (at least for the split Cartan curves $X_{\mathrm{s}} (p)$, if not their Fricke quotient $X_{\mathrm{s}}^+ (p)$) had already been described in the 
first author's thesis (\cite{Edix89}, \cite{Edix89b}).

\subsection{Stable model for ${{\overline{\cal M}}} ({\cal P}, \Gamma_s (p))$}

\begin{theo} 
\label{S} 
Let $p>3$ be a prime, and let $[\Gamma_{\mathrm{s}} (p)]$ be the moduli problem over $\Z [1/p]$ associated with a split Cartan subgroup $\Gamma_{\mathrm{s}} (p)$ 
in level $p$ (not its normalizer).
Let ${\cal P}$ be a representable moduli problem, which is finite \'etale over $({\mathrm{Ell}})_{/\Z_p}$.  % 
Let $\overline{{\cal M}} ({\cal P}, \Gamma_{\mathrm{s}} (p)) =\overline{{\cal M}} ({\cal P}, \Gamma (p))/\Gamma_{\mathrm{s}} (p)$ be the associated compactified 
fine moduli space. We denote by $W$ a totally ramified extension of $\Z_p^{\mathrm{ur}}$ of degree $(p^2 -1)/2$, as in Theorem~\ref{NS}. 

\medskip 

   Then $\overline{{\cal M}} ({\cal P}, \Gamma_{\mathrm{s}} (p))$ has a semistable model over $W$ whose special fiber is made of four vertical Igusa parts,
which are linked by horizontal Drinfeld components above each supersingular points via the projection $\overline{{\cal M}} ({\cal P}, \Gamma_{\mathrm{s}} (p)) 
\to \overline{{\cal M}} ({\cal P}) $.

\medskip 
 
 The two central parts are isomorphic to enhanced quotients of Igusa curves $\overline{{\cal M}} ({\cal P},{\mathrm{Ig}} (p)/\{ \pm 1\} )_{\overline{\F}_p} )$.
We call them ${\mathrm{Ig}} (p)_1$ and ${\mathrm{Ig}} (p)_d$ (for $d$ some non-square in $\F_p^*$). The two outer vertical parts are simply copies 
of $\overline{{\cal M}} ({\cal P})$.

\medskip

  If ${\cal S}_{\cal P}$ is the number of supersingular points in $\overline{{\cal M}} ({\cal P})$, the ${\cal S}_{\cal P}$ horizontal (Drinfeld) components are all copies of some
hyperelliptic curves for which an affine model is 
\begin{eqnarray}
\label{rinfeldXns}
U^2 =V^{p+1} +A_{\mathrm{s}}  
\end{eqnarray}
for some non-zero $A_{\mathrm{s}}$ in $\overline{\F}_p^*$. 

\medskip

    The double points of the central Igusa components have local rings $W[[x,y]]/(xy-\pi_0 )$, and those on the two rational outer vertical components, 
have local rings $W[[x,y]]/(xy-\pi_0^{(p-1)/2} )$, for $\pi_0$ a uniformizer of $W$.

\end{theo}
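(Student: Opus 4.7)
The plan mirrors the proof of Theorem~\ref{NS}: start from the stable model $\overline{\cal M}({\cal P},\Gamma(p))^{\mathrm{st}}$ of Section~\ref{X(p)} and form the quotient by the split Cartan $\Gamma_{\mathrm{s}}(p) = \{\mathrm{diag}(a,b) : a,b \in \F_p^*\}$. All stabilizers that appear divide $(p-1)^2$, hence are prime to~$p$; so semistability is preserved and it suffices to describe each component of the quotient together with the local rings at the nodes.

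First I would enumerate the $\Gamma_{\mathrm{s}}(p)$-orbits on the index set $\F_p^* \times \PPP^1(\F_p)$ of the vertical Igusa parts, acted on by $(i,L) \mapsto (i\det g, g^{-1}L)$. Since $\Gamma_{\mathrm{s}}(p)$ fixes the coordinate lines $(1:0)$ and $(0:1)$ while $\det$ surjects onto $\F_p^*$, each of $\F_p^* \times \{(1:0)\}$ and $\F_p^* \times \{(0:1)\}$ is a single orbit of size $p-1$, with stabilizer $\Gamma_{\mathrm{s}}(p) \cap \SL_2(\F_p) \cong \F_p^*$. This torus acts on ${\mathrm{Ig}}_{i,L} \cong \overline{\cal M}({\cal P}, [{\mathrm{ExIg}}(p,1)])$ as the Galois group of the Igusa cover over $\overline{\cal M}({\cal P})$, so each quotient is a copy of $\overline{\cal M}({\cal P})$ and yields an outer vertical part. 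On the remaining pairs $(i,(1:c))$ with $c\in\F_p^*$, a direct calculation shows that $g=\mathrm{diag}(a,b)$ sends $ic$ to $a^2 ic$, so the square class of $ic$ separates them into two orbits; each has stabilizer $\{\pm I\}$ acting as the hyperelliptic involution on the Igusa curve, giving the two central parts, each a copy of $\overline{\cal M}({\cal P},{\mathrm{Ig}}(p)/\{\pm 1\})_{\overline{\F}_p}$.

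For the Drinfeld components $D_{i,s}$ the stabilizer is again $\Gamma_{\mathrm{s}}(p)\cap\SL_2 \cong \F_p^*$, acting on the model~(\ref{modelDis}) by $(\alpha,\beta) \mapsto (a\alpha, a^{-1}\beta)$ via~(\ref{gonalpha}). Writing the defining equation as $\alpha\beta(\alpha^{p-1}-\beta^{p-1}) = 1$, the invariants $V := \alpha\beta$ and $U := \alpha^p\beta + \alpha\beta^p$ satisfy the identity
\[
U^2 = V^2(\alpha^{p-1}+\beta^{p-1})^2 = V^2\bigl((\alpha^{p-1}-\beta^{p-1})^2 + 4(\alpha\beta)^{p-1}\bigr) = 1+4V^{p+1},
\]
which up to the inessential constant $A_{\mathrm{s}} \in \overline{\F}_p^*$ (cf.~Remark~\ref{bisrepetitas}) is the hyperelliptic equation~(\ref{rinfeldXns}).

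Finally, the nodes of $\overline{\cal M}({\cal P},\Gamma(p))^{\mathrm{st}}$ have local equations $W[[x,w]]/(xw-\pi)$ of width~$1$ (with $x,w$ the blow-up parameters introduced after~(\ref{modelDis}), linked to $\alpha,\beta$ by $\alpha=\pi^{-1}x$ and the analogous formula for $\beta$), and the stabilizer of a node is the intersection of the stabilizers of the two components meeting there. At a node joining a Drinfeld to an outer Igusa component this is the full $\F_p^*$, acting by $(x,w)\mapsto(ax,a^{-1}w)$; the invariants $x^{p-1}$ and $w^{p-1}$ yield $xw=\pi^{p-1}$. At a node joining a Drinfeld to a central Igusa component the stabilizer is $\{\pm I\}$ acting by $(x,w)\mapsto(-x,-w)$, and the invariants $x^2, w^2$ give $xw=\pi^2$. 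The main obstacle is the combinatorial bookkeeping of $\Gamma_{\mathrm{s}}(p)$-orbits and stabilizers on the Igusa index set—in particular pinning down the square-class invariant $ic \bmod (\F_p^*)^2$ that separates the two central orbits, and checking that at each node type the torus acts via the computed scaling—after which the invariant-ring calculation for the Drinfeld quotient and the tame-quotient widths at the nodes are routine.
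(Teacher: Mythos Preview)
Your proposal is correct and follows essentially the same route as the paper: orbit analysis on $\F_p^*\times\PPP^1(\F_p)$ for the vertical parts, quotient of $D_{i,s}$ by the diagonal torus for the Drinfeld components, and tame cyclic quotients of width-$1$ nodes for the singularities. The only noteworthy difference is your choice of invariants on the Drinfeld side: the paper takes $(u,v)=(\alpha^{p-1},\alpha\beta)$, obtains the intermediate equation $v^p-u^2v+au=0$, and then substitutes $U=uv-a/2$; you instead take $V=\alpha\beta$ and the symmetric invariant $U=\alpha^p\beta+\alpha\beta^p$ and reach $U^2=1+4V^{p+1}$ directly via the identity $(\alpha^{p-1}+\beta^{p-1})^2=(\alpha^{p-1}-\beta^{p-1})^2+4(\alpha\beta)^{p-1}$, which is a slightly slicker path to the same hyperelliptic model.
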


\begin{proof}

This is very akin to the proof of Theorem~\ref{NS}. We compute the quotient of the vertical Igusa parts ${\mathrm{Ig}}_{i,P}$ indexed by $\F_p^* \times \PPP^1 (\F_p )$. 
Fixing a split torus 
$$
\Gamma_{\mathrm{s}} (p)=
\{
\left(
\begin{array}{cc}
a & 0\\
0 & b
\end{array}
\right)
, a,b\in \F_p^*
\}$$ 
and writing again $D\simeq \F_p^*$ for the subgroup of diagonal matrices, $\Gamma_{\mathrm{s}} (p)$ acts on $\PPP^1 (\F_p )$ via its quotient $\Gamma_{\mathrm{s}} (p)  /
D \simeq \Z/(p-1)\Z$. That action has two fixed points say $(1\colon 0)$ and $(0\colon 1)$, and one orbit of size $p-1$. 
One chooses as representatives for the coset $(\F_p^* \times \PPP^1 (\F_p ) )/{\Gamma_{\mathrm{s}} (p)}$ the four elements $(1,(1\colon 0))$, $(1,(0\colon 1))$, 
$(1,(1\colon 1))$ and $(d,(1\colon 1))$ for $d$ some non-square in $\F_p^*$. The Igusa parts
%,  
attached with the first two representatives, have stabilizer ${\Gamma_{\mathrm{s}} (p)}\cap {\mathrm{SL}}_2 (\F_p ) = \{  \left(
\begin{array}{cc}
t & 0\\
0 & t^{-1}
\end{array}
\right),
t\in \F_p^* \}$. The stabilizer of the other two parts is $\{ \pm 1\}$. So two vertical parts are isomorphic to the quotient 
$\overline{\cal M} ({\cal P},{\mathrm{Ig}} (p)/\F_p^*)_{{\overline{\F}_p}} \simeq \overline{\cal M} ({\cal P})_{{\overline{\F}}_p}$,
and two are isomorphic to $\overline{\cal M} ({\cal P}, {\mathrm{Ig}} (p)/\{ \pm 1 \} )_{{\overline{\F}_p}}$. 
This is for the first part of the Theorem.

    Let us deal with the Drinfeld components. Recall (cf.~(\ref{modelDis})) that an equation for them in the bad fiber of the semistable 
model $\overline{\cal M} (\Gamma (p) )^{\mathrm{st}}$ is given by
\begin{eqnarray}
\label{4.3.2.S}
-a =\alpha \beta^p -   \alpha^p \beta
\end{eqnarray}
for some $a$ in $\overline{\F}_p^*$.  The stabilizer in ${\Gamma_{\mathrm{s}} (p)}$ of any component~$D_{i,s}$ 
is ${\Gamma_{\mathrm{s}} (p)}\cap {\mathrm{SL}}_2 (\F_p )$,
its action on coordinates of $D_{i,s}$ is given by $(\alpha ,\beta ) \mapsto (t\alpha ,t^{-1} \beta )$, so coordinates on 
$D_{i,s} /{\Gamma_{\mathrm{s}} (p)}\cap {\mathrm{SL}}_2 (\F_p )$ can be chosen as $(u,v)=(\alpha^{p-1} ,\alpha \beta )$.  
From that, equation~(\ref{4.3.2.S}) becomes
\begin{eqnarray}
\label{4.3.3.s}
v^p  -   u^2 v +a\cdot u=0
\end{eqnarray}
and the change of variables $(U,V) :=(uv -a/2,v)$ yields:
$$
U^2 =V^{p+1} +\frac{a^2}{4} 
$$
as a hyperelliptic model for $D_s$. The assertion about the thickness of singularities follows from similar arguments as those in the proof of Theorem~\ref{NS}.  $\Box$

%%%%%%%%%%%%%%%%%%%%%%%%%%%%%%%%%%%%%%%%%%%%%%%%%%%%%%%%%%%%%%%%%%%%%%%
%%%%%%%%%%%%%%%%%%%%%%%%%%%%%%%%%%%%%%%%%%%%%%%%%%%%%%%%%%%%%%%%%%%%%%%%
%%%%%%%%%%%%%%%%%%%%%%%%%%%%%%%%%%%%%%%%%%%%%%%%%%%%%%%%%%%%%%%%%%%%%%%
%%%%%%%%%%%%%%%%%%%%%%%%%%%%%%%%%%%%%%%%%%%%%%%%%%%%%%%%%%%%%%%%%%%%%%%%
%%%%%%%%%%%%%%%%%%%%%%%%%%%%%%%%%%%%%%%%%%%%%%%%%%%%%%%%%%%%%%%%%%%%%%%
%%%%%%%%%%%%%%%%%%%%%%%%%%%%%%%%%%%%%%%%%%%%%%%%%%%%%%%%%%%%%%%%%%%%%%
%%%%%%%%%%%%%%%%%%%%%%%%%%%%%%%%%%%%%%%%%%%%%%%%%%%%%%%%%%%%%%%%%%%%%%%%%
%%%%%%%%%%%%%%%%%%%%%%%%%%%%%%%%%%%%%%%%%%%%%%%%%%%%%%%%%%%%%%%%%%%%%%%
%%%%%%%%%%%%%%%%%%%%%%%%%%%%%%%%%%%%%%%%%%%%%%%%%%%%%%%%%%%%%%%%%%%%%%%%
%%%%%%%%%%%%%%%%%%%%%%%%%%%%%%%%%%%%%%%%%%%%%%%%%%%%%%%%%%%%%%%%%%%%%%%%

\end{proof}

\subsection{Stable model for ${\overline{{\cal M}}} ({\cal P}, \Gamma_s^+ (p))$}

\begin{theo} 
\label{S+} 
Let $p>3$ be a prime, and let $[\Gamma_{\mathrm{s}}^+  (p)]$ be the moduli problem over $\Z [1/p]$ associated with the normalizer $\Gamma_{\mathrm{s}}^+  (p)$ 
of a split Cartan subgroup in level $p$. Let ${\cal P}$ a moduli problem as in~Theorem~\ref{NS}, and let $\overline{{\cal M}} ({\cal P}, \Gamma_{\mathrm{s}}^+ (p))  
=\overline{{\cal M}} ({\cal P}, \Gamma (p)) /\Gamma_{s}^+  (p)$ be the corresponding 
compactified fine moduli space. Let $W$ be a totally ramified extension of $\Z_p^{\mathrm{ur}}$ of degree $(p^2 -1)/2$, and ${\cal S}_{\cal P}$ be the number of 
supersingular points of $\overline{{\cal M}} ({\cal P}) (\overline{\F}_p)$.
 
\medskip
 
    If $p\equiv 1$ {\rm mod} $4$, then $\overline{{\cal M}} ({\cal P}, \Gamma_{\mathrm{s}}^+ (p))$ has a semistable model over $W$ whose special fiber is made of three vertical 
parts. Two neighbor vertical parts are isomorphic to the enhanced Igusa curve $\overline{\cal M} ({\cal P}, {\mathrm{Ig}} (p)/C_4 )_{\overline{\F}_p}$, where $C_4$ 
denotes the cyclic subgroup of order $4$ in $\F_p^*$. One outer part is a copy of $\overline{{\cal M}} ({\cal P})$. 
Those three parts are linked above supersingular points of $\overline{{\cal M}} ({\cal P})$ by ${\cal S}_{\cal P}$ horizontal components, cf. first case of Figure~\ref{figureS+}. 

\medskip

   If $p\equiv -1$ {\rm mod} $4$, then $\overline{{\cal M}} ({\cal P}, \Gamma_{\mathrm{s}}^+ (p))$ has a semistable model over $W$ whose special fiber is made of only two
vertical parts. One is isomorphic to the Igusa curve $\overline{\cal M} ({\cal P}, {\mathrm{Ig}} (p)/\{\pm 1\} )_{\overline{\F}_p}$. The second 
vertical part is again a copy of $\overline{{\cal M}} ({\cal P})$. Those 
components are linked above supersingular points  of $\overline{{\cal M}} ({\cal P})$ by ${\cal S}_{\cal P}$ horizontal components, cf. second case of Figure~\ref{figureS+}.

\medskip 
 
   Wether $p$ is $1$ or $-1$ {\rm mod} $4$, the ${\cal S}_{\cal P}$ horizontal components $D_s^+$ of the special fiber are copies of some hyperelliptic curve for 
which an affine model is given by

\begin{eqnarray}
\label{DrinfeldXs0^+}
Y^2 =X(X^{\frac{p+1}{2}}  +A_{\mathrm{s}}  )
\end{eqnarray}
for some $A_{\mathrm{s}}$ in $\overline{\F}_p^*$.

\medskip

  Double points on the trivial vertical part (which is a copy of $\overline{{\cal M}} ({\cal P})$) in the special fiber have local rings $W[[x,y]]/(xy-\pi_0^{(p-1)/2} )$, where $\pi_0$ is 
some uniformizer of $W$. As for the genuine Igusa components, singularities have rings $W[[x,y]]/(xy-\pi_0^2 )$, if $p\equiv 1$ {\rm mod} $4$, or $W[[x,y]]/(xy-\pi_0 )$ 
if $p\equiv -1$ {\rm mod} $4$.

\end{theo}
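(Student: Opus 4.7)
The strategy closely parallels that of Theorem~\ref{NS+}: I would start from the stable model of Theorem~\ref{S} for $\overline{\cal M}({\cal P}, \Gamma_{\mathrm{s}}(p))$ and take the further quotient by the involution generating $\Gamma_{\mathrm{s}}^+(p)/\Gamma_{\mathrm{s}}(p)\simeq \Z/2\Z$. I would represent this involution by $w=\left(\begin{array}{cc} 0 & 1 \\ 1 & 0 \end{array}\right)$, of determinant $-1$.

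For the vertical parts, I would reuse the four cosets $(1,(1:0))$, $(1,(0:1))$, $(1,(1:1))$, $(d,(1:1))$ from the proof of Theorem~\ref{S}, and examine how $w$ permutes them via the action $(i,L)\mapsto(i\det(g),g^{-1}L)$. Since $w$ swaps the two $\Gamma_{\mathrm{s}}(p)$-fixed lines $(1:0)$ and $(0:1)$, the two outer rational parts would get identified into a single copy of $\overline{\cal M}({\cal P})$. The image of $(1,(1:1))$ under $w$ is $(-1,(1:1))$: if $p\equiv 1\mod 4$, then $-1$ is a square in $\F_p^*$, so each central part would be preserved by $w$ and the further quotient acts via $\lambda I\cdot w$ with $\lambda^2=-1$, an element which together with $\{\pm 1\}$ generates the cyclic group $C_4\subset\F_p^*$, yielding $\overline{\cal M}({\cal P},{\mathrm{Ig}}(p)/C_4)$; if $p\equiv -1\mod 4$, $w$ would swap the two central parts, leaving a single copy of $\overline{\cal M}({\cal P},{\mathrm{Ig}}(p)/\{\pm 1\})$.

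For the Drinfeld components, I would use the coordinates $u=\alpha^{p-1}$, $v=\alpha\beta$ and $U=uv-a/2$, $V=v$ from Theorem~\ref{S}'s proof, with $U^2=V^{p+1}+(a/2)^2$. Because $w\notin\SL_2(\F_p)$ does not stabilize any individual $D_{i,s}$ but sends it to $D_{-i,s}$, I would compose $w$ with a diagonal matrix of determinant $-1$, namely $h=\mathrm{diag}(-1,1)\in\Gamma_{\mathrm{s}}(p)$. The element $hw$ acts by $(\alpha,\beta)\mapsto(\beta,-\alpha)$; using the Drinfeld relation $\alpha^p\beta-\alpha\beta^p=a$ I would derive $u\mapsto \beta^{p-1}=u-a/v$ and $v\mapsto -v$, giving on the quotient the involution $(U,V)\mapsto(-U,-V)$ of $D_s$. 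The invariant ring is generated by $X:=V^2$ and $Y:=UV$, so the quotient $D_s^+$ would be cut out by
\[
Y^2=U^2V^2=\bigl(V^{p+1}+(a/2)^2\bigr)V^2=X\cdot X^{(p+1)/2}+(a/2)^2X=X\bigl(X^{(p+1)/2}+A_{\mathrm{s}}\bigr),
\]
which matches equation~(\ref{DrinfeldXs0^+}).

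The widths of the singularities would be obtained as in Theorem~\ref{NS+}. On the trivial outer vertical part, singular points would inherit the width $p-1$ unchanged, since $w$ merely merges two disjoint outer components and produces no additional local stabilizer at each singular point. On the central Igusa parts, the case $p\equiv -1 \mod 4$ would preserve the width $2$ of Theorem~\ref{S}, whereas for $p\equiv 1\mod 4$ the extra stabilizer $\lambda I\cdot w$ would act nontrivially on local transverse parameters at the singular points (with its square $-I$ acting trivially on the quotient), doubling the width from $2$ to $4$. The main obstacle is precisely this analysis of the action of the lifted involution $hw$ on the various local parameters: because $w$ does not stabilize the $D_{i,s}$ individually, the correction by $h\in\Gamma_{\mathrm{s}}(p)$ must be tracked systematically both in producing the hyperelliptic equation for $D_s^+$ and in identifying, at each singular point in the $p\equiv 1 \mod 4$ case, the correct representative of $w$ stabilizing that point and its action on a pair of local parameters.
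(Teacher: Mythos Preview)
Your proposal is correct and follows essentially the same approach as the paper. The paper likewise passes to the quotient of the model from Theorem~\ref{S} by the nontrivial coset of $\Gamma_{\mathrm{s}}^+(p)/\Gamma_{\mathrm{s}}(p)$, observes that the anti-diagonal elements swap the two outer rational parts and send $(1,(1{:}1))$ to $(-1,(1{:}1))$, and then works on the Drinfeld components with a representative $w_{t,-t^{-1}}\in\SL_2(\F_p)$ (your $hw$ is precisely $w_{-1,1}$), obtaining the same involution $(U,V)\mapsto(-U,-V)$ and the same invariants $X=V^2$, $Y=UV$.
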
 

\begin{proof}

This is again also very similar to the proof of Theorem~\ref{NS+}. We take the further quotient of the curve $\overline{{\cal M}} ({\cal P}, \Gamma_{\mathrm{s}} (p)) $
by the normalizer $ \Gamma_{\mathrm{s}} (p)^+$. 
%%%%%%%%%%%%%%%%%%%%%%%%%%%%%%%%%%%%%%%%%%%%%%%%%%%%%%%%%%%%%%%%%%%%%%%%
Fricke's involution $w$ is here given by the set $\{
w_{a,b} :=\left(
\begin{array}{cc}
0 & a\\
b & 0
\end{array}
\right)
,  
a,b\in \F_p^*
\} .$
Therefore $w$ switches the two outer vertical parts. As for the central ones, their representatives $(1,(1\colon 1))$ and $(d,(1\colon 1))$ are mapped to
$(-1,(1\colon 1))$ and $(-d,(1\colon 1))$ respectively, by $w$. It follows that the components ${\mathrm{Ig}} (p)_1$ and ${\mathrm{Ig}} (p)_d$ of
Theorem~\ref{S} are switched if and only if $-1$ is a non-square in $\F_p^*$. 

With notations as in~(\ref{4.3.3.s}) one checks that, for $w_{t,-t^{-1}}$ in ${\mathrm{SL}}_2 (\F_p )$: 
$$
(u,v) \cdot w_{t,-t^{-1}} =(\alpha^{p-1} , \alpha \beta )\cdot w_{t,-t^{-1}} =((t\beta)^{p-1} , -\alpha \beta ) =(\frac{v^{p-1}}{u} ,-v) =(u-\frac{a}{v},-v)
$$
so the coordinates $U:=uv-a/2$ and $V:=v$ and mapped to their opposite by $w_{t,-t^{-1}}$. Therefore  
\begin{eqnarray}
\label{x2y2}
\left\{
\begin{array}{l}
X :=V^2    \\
Y := UV 
\end{array}
\right.
\end{eqnarray}
are coordinates for the image of any Drinfeld component in our $X_{\mathrm{s}}^+ (p)_{\overline{\F}_p}$, and we conclude as in the proof of
Theorem~\ref{NS+}. 
$\Box$

\end{proof}

\subsection{Stable model for $X_s (p)_\Q$}

Now for the coarse case.

\begin{theo} 
\label{S0} 
For $p>3$ a prime, let $X_{\mathrm{s}} (p)$ be the modular curve associated with a split Cartan subgroup $\Gamma_{\mathrm{s}} (p)$ in level $p$.
Let ${\cal S}=g(X_0 (p)) +1$ be the number of supersingular $j$-invariants in characteristic $p$, where $g(X_0 (p))$ is the genus of $X_0 (p)$. 
Let $W=\Z_p^{\mathrm{ur}} [(1-\zeta_p )^{2/(p +1)} ]$ be as in Theorem~\ref{NS}. Then $X_{\mathrm{s}} (p)$ has 
a semistable model over $W$ whose special fiber is made of four vertical irreducible components, which are linked in ${\cal S}$ points by ${\cal S}$ horizontal components, 
cf. Figure~\ref{figureS}. The toric part of its jacobian has therefore dimension $3({\cal S}-1)=3g(X_0 (p))$.
 
\medskip 
 
 The two central vertical components, call them ${\mathrm{Ig}} (p)_1$ and ${\mathrm{Ig}} (p)_d$, are isomorphic to the quotient coarse Igusa curve $\overline{M} 
({\mathrm{Ig}} (p)/\{ \pm 1\} )_{{\overline{\F}}_p}$. The two outer vertical components are projective lines.  

\medskip

  The ${\cal S}$ horizontal Drinfeld components are all hyperelliptic smooth curves for which an affine model is given by 
\begin{eqnarray}
\label{DrinfeldXs0}
U^2 =V^{\frac{p+1}{e(s)}} +A_{\mathrm{s}}  
\end{eqnarray}
for some $A_{\mathrm{s}}$ in $\overline{\F}_p^*$, and $e(s)={\mathrm{Card}} ( {\mathrm{Aut}}_{\overline{\F}_p} 
(s)/\{ \pm 1\} )$.

\medskip
  
   Singular points on the rational vertical components have local rings $W[[x,y]]/(xy-\pi_0^{e(s)(p-1)/2} )$ and those on Igusa components have rings 
$W[[x,y]]/(xy-\pi_0^{e(s)} )$, for $\pi_0$ a uniformizer of $W$.

%   
%$$
%
\end{theo}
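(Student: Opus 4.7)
The plan is to reduce Theorem~\ref{S0} to Theorem~\ref{S} by taking the quotient of the semistable model $\overline{{\cal M}}({\cal P}, \Gamma_{\mathrm{s}}(p))^{\mathrm{st}}$ by the finite group $G = \mathrm{Gal}({\cal P}/(\mathrm{Ell})_{\Z_p})$. Since $p>3$, the stabilizers of geometric points under $G$ have order dividing $|{\mathrm{Aut}}(E)|$ for an elliptic curve $E$, hence at most $6$ and in particular prime to~$p$. This guarantees that the quotient is again semistable (after possibly normalizing), and that we may compute the quotient components and local equations by keeping careful track of the $G$-action on the strata produced in Theorem~\ref{S}. This is exactly the same strategy as in the proof of Theorem~\ref{NS0}.

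Next I would identify the vertical components of the quotient. The two central parts of Theorem~\ref{S} are isomorphic to $\overline{{\cal M}}({\cal P}, {\mathrm{Ig}}(p)/\{\pm 1\})_{\overline{\F}_p}$; quotienting by $G$ simply yields the coarse Igusa curves $\overline{M}({\mathrm{Ig}}(p)/\{\pm 1\})_{\F_{p^2}}$, which provides ${\mathrm{Ig}}(p)_1$ and ${\mathrm{Ig}}(p)_d$. The two outer vertical parts of Theorem~\ref{S} are copies of $\overline{{\cal M}}({\cal P})$, whose quotient by $G$ is $\overline{M}(1) \simeq \PPP^1$ over $\F_{p^2}$, giving the two rational components of the statement. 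For the toric rank, the dual graph of the special fiber has $4 + s$ vertices, and each of the $s$ Drinfeld components meets all four vertical components transversally, giving $4s$ edges; the first Betti number is then $4s - (4+s) + 1 = 3(s-1)$.

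The main work, and the delicate point, is to determine the equation of each Drinfeld component $D_s^+$ after taking the quotient. Away from the supersingular points with $j \in \{0, 1728\}$, no extra automorphism appears and the equation $U^2 = V^{p+1} + A_{\mathrm{s}}$ of Theorem~\ref{S} descends unchanged ($e(s)=1$). At a supersingular point with $j = 1728$ (resp. $j = 0$), we have to follow the action of $[i]$ (resp. $[\zeta]$), whose effect on $\alpha, \beta$ is described in Section~\ref{0-1728}, through the intermediate coordinates $u = \alpha^{p-1}$, $v = \alpha\beta$ and then through $U = uv - a/2$, $V = v$ used in the proof of Theorem~\ref{S}. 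A direct substitution shows that $V$ transforms by multiplication by a primitive $e(s)$-th root of unity, while $U$ is invariant, so that the ring of invariants is generated by $U$ and $V^{e(s)}$. Rewriting the model $U^2 = V^{p+1} + A_{\mathrm{s}}$ in those invariants yields precisely the hyperelliptic equation~(\ref{DrinfeldXs0}) of shape $U^2 = W^{(p+1)/e(s)} + A_{\mathrm{s}}$, where $W := V^{e(s)}$.

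Finally, the widths of the singularities are extracted from the local equation at each node together with the ramification of $G$ along the vertical component passing through it. Raising a local equation $xy = \pi^m$ to a quotient by an order-$e$ stabilizer acting on one branch multiplies the exponent by $e$, and the widths $W[[x,y]]/(xy - \pi^2)$ on central Igusa components and $W[[x,y]]/(xy - \pi^{p-1})$ on outer rational components from Theorem~\ref{S} become $W[[x,y]]/(xy - \pi^{2e(s)})$ and $W[[x,y]]/(xy - \pi^{(p-1)e(s)})$ respectively. The main obstacle in the proof is indeed verifying that the invariant coordinates $U$ and $V^{e(s)}$ really do give the correct affine model of the quotient Drinfeld component (and not merely a birationally equivalent one): this is where one uses that the Drinfeld component is smooth, so that the quotient by a tame cyclic group acting with isolated fixed points is still smooth, and comparing degrees of the associated covers forces equality. $\hfill\square$
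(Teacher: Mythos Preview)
Your proposal is correct and follows essentially the same approach as the paper's own proof: reduce to Theorem~\ref{S} by quotienting by the Galois group $G$ of~$\cal P$, observe that stabilizers have order prime to~$p$, and track the action of the exceptional automorphisms $[\zeta_n]$ from Section~\ref{0-1728} through the coordinates $(u,v)=(\alpha^{p-1},\alpha\beta)$ and $(U,V)=(uv-a/2,v)$ to obtain~(\ref{DrinfeldXs0}). Your write-up is in fact more explicit than the paper's (which simply says ``doing the math''), notably in verifying that $U$ is invariant and $V\mapsto\zeta_n^2 V$, and in spelling out the Betti number computation $4s-(4+s)+1=3(s-1)$ for the toric rank.
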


(Note that, abusing a bit notations, we have used the same labels ${\mathrm{Ig}} (p)_1$ and ${\mathrm{Ig}} (p)_d$ 
as in Theorem~\ref{S}. Note also that equations~(\ref{DrinfeldXs0}) define genuinely hyperelliptic curves only 
when $p$ is not to small.)  

\begin{proof}
Here we parallel the proof of Theorem~\ref{NS0}.  Indeed Theorem~\ref{S} shows that we only need assume ${\cal P}$ is Galois 
with group $G$, and take the quotient of our semistable model $\overline{\cal M} ({\cal P}, \Gamma_{s} (p))^{\mathrm{st}}$ by $G$. Then we 
check what happens on the locus of extra-automorphisms, that is, on Drinfeld components associated 
with supersingular $j$ invariants equal to $1728$ or $0$. Section~\ref{0-1728} shows that the exceptional automorphism 
$[\zeta_n ]$ (for $n=4$ or $6$) maps the parameters $\alpha$ and $\beta$ to $\zeta_n \alpha$ and $\zeta_n \beta$ 
respectively. Keeping track of those transformations through the computations of equations~(\ref{4.3.3.s}) and around,
and doing the math, shows that the relevant quotients Drinfeld components are indeed 
given by~(\ref{DrinfeldXs0}).   $\Box$

\end{proof}

%%%%%%%%%%%%%%%%%%%%%%%%%%%%%%%%%%%%%%%%%%%%%%%
%%%%%%%%%%%%%%%%%%%%%%%%%%%%%%%%%%%%%%%%%%%%%%%%
%%%%%%%%%%%%%%%%%%%%%%%%%%%%%%%%%%%%%%%%%%%

\begin{figure}
% \label{figureS}
\begin{center}
\begin{picture}(320,200)(-30,-80)

%%%%%%%%%%%%%%%%%%%%%%%%%%

\qbezier(-20,55)(135,85)(235,50)
\put(-40,55){$D_s$}

\qbezier(-20,35)(165,55)(235,20)
\put(-40,35){$D_s$}

\qbezier(-20,-45)(115,-35)(235,-65)
\put(-40,-45){$D_s$}

%%%%%%%%%%%%%%%%%%%%%%%%%%

\qbezier(80,105)(85,0)(75,-70)
\put(55,-90){${\mathrm{Ig}} (p)_1$}

\qbezier(130,105)(124,0)(140,-70) 
 \put(137,-90){${\mathrm{Ig}} (p)_d$}

%%%%%%%%%%%%%%%%%%%%%%

\qbezier(10,105)(15,0)(-5,-70)
\put(55,-90){${\mathrm{Ig}} (p)_1$}

\qbezier(195,105)(189,0)(212,-76) 
 \put(137,-90){${\mathrm{Ig}} (p)_d$}

%%%%%%%%%%%%%%%%%%%%%%

\put(40,27){$\cdot$}
\put(40,5){$\cdot$}
\put(40,-17){$\cdot$}
\put(40,-36){$\cdot$}

\put(160,27){$\cdot$}
\put(160,5){$\cdot$}
\put(160,-17){$\cdot$}
\put(160,-36){$\cdot$}

%%%%%%%%%%%%%%%%%%%%%

\put(270,-35){$g(X_0 (p)) +1$} 
\put(275,-50){copies}

\put(265,-22){\vector(-1,3){20}}

\put(265,-28){\vector(-1,2){17}}

\put(265,-32){\vector(-2,-3){20}}

%%%%%%%%%%%%%%%%%%%%%

%\
\end{picture}
\end{center}
\caption{Special fiber of ${X}_{\mathrm{s}} (p)$}
\label{figureS}
\end{figure}

%%%%%%%%%%%%%%%%%%%%%%%%%%%%%%%%%%%%%%%%%%%%%%%
%%%%%%%%%%%%%%%%%%%%%%%%%%%%%%%%%%%%%%%%%%%
 
\medskip

\subsection{Stable model for $X_s^+ (p)_\Q$}

\begin{theo}  
\label{S0+}

Let $p>3$ be a prime, and use the same notations as in~Theorem~\ref{S} above. Let $w$ be the involution of the curve 
$X_{\mathrm{s}} (p)$ defined by the action of the normalizer $\Gamma_{\mathrm{s}}^+ (p)$, and let $X_{\mathrm{s}}^+ 
(p) :=X_{\mathrm{s}} (p)/w$ be the quotient curve. Let $W=\Z_p^{\mathrm{ur}} [(1-\zeta_p )^{2/(p +1)}]$ as in Theorem~\ref{NS}. 
 
   Then in the special fiber over $W$, $w$ leaves the horizontal components of $X_{\mathrm{s}} (p)$ stable and exchanges the two outer vertical (rational) 
components. It switches the two central vertical ones if $p\equiv -1$ {\rm mod} $4$, else it leaves them stable. The special fiber 
of $X_{\mathrm{s}}^+ (p)^{\mathrm{st}}$ over $W$ therefore has a dual graph as in  Figure~\ref{figureS+}. Its toric rank is explicitely 
\begin{itemize}
\item $t=\frac{p-13}{6}$ if $p\equiv 1$ {\rm mod} $12$;
\item  $t=\frac{p-5}{6}$ if $p\equiv  5$ {\rm mod} $12$;
\item  $t=\frac{p-7}{12}$ if $p\equiv 7$ {\rm mod} $12$;
\item  $t=\frac{p+1}{12}$, if $p\equiv  11$ {\rm mod} $12$. 
\end{itemize}

\medskip

One vertical component of $X_{\mathrm{s}}^+ (p)^{\mathrm{st}}$ is therefore a projective line.  Each of the two other 
vertical components, in the case $p\equiv 1$ {\rm mod} $4$, is isomorphic to the quotient coarse Igusa curve 
$\overline{M} ({\mathrm{Ig}} (p)/C_4 )_{{\overline{\F}}_{p}}$, for $C_4$ the scalar subgroup of order $4$. 
When $p\equiv -1$ {\rm mod} $4$, the remaining non-rational vertical component is $\overline{M} 
({\mathrm{Ig}} (p)/\{ \pm 1\} )_{{\overline{\F}}_{p}}$.  

\medskip

   The ${\cal S}$ Drinfeld horizontal components, above supersingular invariants different from $0$ and $1728$, are hyperelliptic 
curves for which an affine model is given by
\begin{eqnarray}
\label{DrDrinfeldXs^+}
Y^2 =X(X^{\frac{p+1}{2}}  +A_{\mathrm{s}}  )
\end{eqnarray}
for some $A_\mathrm{s}$ in $\overline{\F}_p^*$.  If the supersingular invariant is $0$, $D_{0}^+$ has a model
\begin{eqnarray}
\label{DrDinfeldXs0+}
Y^2 =X(X^{\frac{p+1}{6}}  +A_{\mathrm{s}}  ).
\end{eqnarray}
If the supersingular invariant is $1728$, then $D_{1728}^+$ is just a projective line ${\PPP^1}_{{\overline{\F}}_{p}}$.

\medskip
   
   Double points on the rational vertical component have rings $W[[x,y]]/(xy-\pi_0^{e(s)(p-1)/2} )$, for 
$e(s)={\mathrm{Card}} ( {\mathrm{Aut}}_{\overline{\F}_p} (s)/\{ \pm 1\} )$ and $\pi_0$ a uniformizer of $W$. 
As for Igusa components, singularities in the special fiber have local rings $W[[x,y]]/(xy-\pi_0^{2e(s)} )$ if $p\equiv 1$ 
{\rm mod} $4$, and $W[[x,y]]/(xy-\pi_0^{e(s)} )$ if $p\equiv -1$ {\rm mod} $4$.

\end{theo}

\begin{proof}
This time what we mimic is Theorem~\ref{NS0+}: use Theorem~\ref{S+}, applying similar arguments as in the proof of Theorem~\ref{S0}. $\Box$  
\end{proof}

\begin{figure}
\begin{center}
\begin{picture}(320,200)(-30,-80)

 %%%%%%%%%%%%%%%%%%%%%%%%%%

\qbezier(-58,55)(5,85)(95,50)
\put(-73,55){$D_s^+$}

\qbezier(-58,35)(5,55)(95,20)
\put(-73,35){$D_s^+$}

\qbezier(-58,-45)(5,-35)(95,-70)
\put(-74,-45){$D_s^+$}

%%%%%%%%%%%%%%%%%%%%%%%%%%

\qbezier(-40,105)(-35,0)(-45,-70)
% \put(-65,-90){${\mathrm{Ig}} (p)_1$}

\qbezier(40,105)(35,0)(55,-70) 
%   \put(47,-90){${\mathrm{Ig}} (p)_d$}

%%%%%%%%%%%%%%%%%%%%%%

\qbezier(10,105)(15,0)(11,-70)

%%%%%%%%%%%%%%%%%%%%%%%%%%

\put(-20,-85){$p\equiv 1$ {\rm mod} $4$}

%%%%%%%%%%%%%%%%%%%%%%

\put(-50,27){$\cdot$}
\put(-50,5){$\cdot$}
\put(-50,-17){$\cdot$}
\put(-50,-36){$\cdot$}

\put(50,27){$\cdot$}
\put(55,5){$\cdot$}
\put(57,-17){$\cdot$}
\put(60,-36){$\cdot$}

%%%%%%%%%%%%%%%%%%%%%

\put(125,-35){$g(X_0 (p)) +1$} 
\put(136,-50){copies}

\put(120,-22){\vector(-1,3){20}}
 
\put(120,-28){\vector(-1,2){19}}

\put(120,-32){\vector(-2,-3){20}}

%%%%%%%%%%%%%%%%%%%%%

\put(185,-16){\vector(1,4){18}}
 
\put(187,-25){\vector(1,3){19}}

\put(189,-32){\vector(2,-1){20}}

%%%%%%%%%%%%%%%%%%%%%
%%%%%%%%%%%%%%%%%%%%%%%

\qbezier(210,58)(245,65)(295,50)
\put(305,50){$D_s^+$}

\qbezier(212,35)(245,45)(295,30)
\put(305,30){$D_s^+$}

\qbezier(218,-45)(245,-35)(295,-45)
\put(305,-50){$D_s^+$}

\put(222,27){$\cdot$}
\put(221,5){$\cdot$}
\put(222,-17){$\cdot$}
\put(224,-36){$\cdot$}

%%%%%%%%%%%%%%%%%%%%%%%%%%

\qbezier(240,105)(245,0)(241,-70)

%%%%%%%%%%%%%%%%%%%%%%%%%%

\qbezier(270,105)(275,0)(271,-70)

%%%%%%%%%%%%%%%%%%%%%%

\put(215,-85){$p\equiv -1$ {\rm mod} $4$}

%%%%%%%%%%%%%%%%%%%%%%

%\
\end{picture}
\end{center}
\caption{Special fiber of ${X}_{\mathrm{s}}^+ (p)$, depending on $p\equiv \pm1$ {\rm mod} $4$}
\label{figureS+}
\end{figure}

\medskip

\begin{rema}
\label{level13}
{\rm It follows from Chen-Edixhoven's theorem (\cite{IC}, \cite{dSE}) that 
$$
{\mathrm{Jac}} ({X}_{\mathrm{s}}^+ (p) )\sim {\mathrm{Jac}} ({X}_{\mathrm{ns}}^+ (p) )\times {\mathrm{Jac}} ({X}_0 (p) )
$$
so for $p=13$ the split and non-split Cartan curves curves ${X}_{\mathrm{s}}^+ (13)$ and  ${X}_{\mathrm{ns}}^+ (13)$ have 
isogenous jacobians. But in~ \cite{Ba14}, Burcu Baran computed models showing that they even are {\it isomorphic} (for 
some isomorphism which does not seem to have any natural modular interpretation - for instance, the packet of six 
$\Q$-valued CM points and the rational cusp on the former curve are mapped to seven rational CM points on the latter (and 
those sets are proven  in~\cite{BM17} to be the full ${X}_{\mathrm{s}}^+ (13) (\Q )$ and ${X}_{\mathrm{ns}}^+ (13) (\Q )$ 
respectively)). Our two models however look like having different bad fibers: both have one horizontal component, 
but ${X}_{\mathrm{s}}^+ (13)_{\F_{13}}$ has three vertical ones, whereas ${X}_{\mathrm{ns}}^+ (13)_{\F_{13}}$ has only 
two. A closer look however shows that the all vertical components are rational. After contracting the $\PPP^1$s 
only the horizontal component of each model therefore survives, and both happen to be geometrically isomorphic
to the genus-$3$ curves with affine model $Y^2 =X^{8}  +X $. This finally shows that our isomorphic modular curves 
have potentially good reduction everywhere.}    
\end{rema}

\section{Exceptional subgroups}  
\label{Exceptional}

We finally do the computations for modular curves in prime level $p$, associated with linear groups
$\Gamma_{\mathfrak{A}_4} (p)$, $\Gamma_{\mathfrak{S}_4} (p)$ and $\Gamma_{\mathfrak{A}_5} (p)$ 
having projective image the permutation groups~$\mathfrak{A}_4$, $\mathfrak{S}_4$ or~$\mathfrak{A}_5$ respectively
(see~\cite{La76}, Chapter XI, and more specifically \cite{Fe76}, for general facts on those).  

   Things go essentially the same way as for the Cartan cases, to the only exception that equations for the Drinfeld
components are more delicate to write down explicitly. It seems in particular that writing them as {\it quotients},
as we did for the Cartan subgroups, is hardly doable with bare hands. So instead of giving closed expressions 
we describe in next paragraph an algorithmic method to obtain them. Then we review the
other features of special fibers (topology of the dual graph, vertical components...) for the three exceptional cases,
and each time display some numerical examples of those Drinfeld equations. 

\subsection{Computation of Drinfeld components}
\label{Computationofthe}
Starting from the affine equation~(\ref{modelDis}) for the generic Drinfeld component $D$ on $X (p)$, or better the smooth projective 
model
\begin{eqnarray}
\label{modelDismaisprojectif}
x^p y -xy^p =z^{p+1}
\end{eqnarray}
we see that the projection $(x,y,z)\mapsto (x,y)$ presents it as a
$\mu_{p+1}$-covering of the projective line, for $\mu_{p+1}$ the group
of $p+1$st roots of unity, which is ramified precisely above
$\PPP^1 (\F_ p)$.  We also see that $D$ is endowed with an action of
$G:=\SL_2 (\F_p )\times {\bf \mu}_{p+1}$ defined as
\begin{equation}\label{actions_sur_D}
\left( \left( \begin{array}{cc}
a & b \\
c & d 
\end{array}
\right) ,\alpha \right) \cdot ( x, y, z) = (ax+cy , bx+dy , \alpha z)
\end{equation}
(recall the ``transposed'' action of $\SL_2 (\F_p )$ as described in~(\ref{gonalpha})).
Clearly the two actions of $\SL_2 (\F_p )$ and $\mu_{p+1}$ commute. 
The group $G$ does not act faithfully, but its quotient by $\{ \pm 1\} =\mu_2 (\F_p )$ (embedded diagonally), does. 
Therefore if $H$ is any subgroup of $\SL_2 (\F_p )$ (containing $-1$)
we have the commutative diagram
\[
\begin{tikzcd}
D/\mu_2  \ar[r,"\pi"] \ar[d,"q"]  & \PPP^1_{\overline{\F}_p}  \ar[d] \\
D/H   \ar[r]  & (\PPP^1_{\overline{\F}_p})/H  
\end{tikzcd}
\]
where the (smooth) curves $D/\mu_2$ and $(D/\mu_2 )/H=D/H$ on the left-hand
side are endowed with an action of
$\mu_{p+1} /\mu_2 \simeq \mu_{\frac{p+1}{2}}$, the quotients by which
are precisely the projective lines on the right-hand side. This
diagram is co-cartesian by the universal properties of the quotient
morphisms, and cartesian exactly away from the locus in
$(\PPP^1 _{\overline{\F}_p} )/H$ where both maps are
ramified (above such points the fibered product has a $2$-dimensional
tangent space).

Let us first make the quotient $(\PPP^1 _{\overline{\F}_p} )/H$
explicit by giving a rational function
$\phi\colon \PPP^1_{\overline{\F}_p}\to\PPP^1_{\overline{\F}_p}$
that realises it. We can take
\begin{equation}\label{eq:quotient_phi}
\phi (t) =\frac{\prod_{P\in O_1} (t-t(P) )^{\# H_1}}{\prod_{P\in O_2} (t-t(P) )^{\# H_2}},  
\end{equation}
where the $O_i$ are two distinct $H$-orbits of elements of
$\PPP^1(\overline{\F}_p)$, not containing~$\infty$, and the $H_i$ are
their respective isotropy groups. The diagram above now has become
\[
\begin{tikzcd}
D/\mu_2  \ar[r,"\pi"] \ar[d,"q"]  & \PPP^1_{\overline{\F}_p}  \ar[d,"\phi"] \\
D/H   \ar[r,"\overline{\pi}"]  & \PPP^1_{\overline{\F}_p}  
\end{tikzcd}
\]
Via~$\overline{\pi}$, $D/H$ is a $\mu_{\frac{p+1}{2}}$-covering of
$\PPP^1_{\overline{\F}_p}$, hence it can be given a (singular)
equation of shape $u^{(p+1)/2} =f(t)$, with
$\zeta\in \mu_{\frac{p+1}{2}}$ sending $u$ to~$\zeta u$, say, and we
need to spot such an~$f$. We can multiply $f$ by arbitrary non-zero
$(p+1)/2$th powers, so we just need to determine ${\mathrm{div}} (f)$
with coefficients modulo~$(p+1)/2$. For that, we observe that, at each
fixed point, $\zeta\in\mu_{p+1}$ acts on the cotangent space of $D$
by~$\zeta$ (use equation~(\ref{actions_sur_D})). Therefore, at each
fixed point of~$D/\mu_2$, $\zeta\in\mu_{(p+1)/2}$ acts on the
cotangent space by~$\zeta$. Now let $P\in D/\mu_2$ be a fixed point
for~$\mu_{(p+1)/2}$, let $Q:=q(P)$ and let $e=\# H_P$ be the
ramification index of $q$ at~$P$. Then $\zeta\in\mu_{(p+1)/2}$ acts on
the cotangent space at~$Q$ by~$\zeta^e$. We note that
$\# H_P =\# H_{\pi (P)}$, the ramification index of $\phi$ at $\pi (P)$, and that
$v_{\overline{\pi}(Q)}(f){\cdot}(p+1)/2 = v_Q(\overline{\pi}^*f) = v_Q(u){\cdot}(p+1)/2$,
hence $v_{\overline{\pi}(Q)}(f) = v_Q(u)$. It follows that
$\zeta\in\mu_{(p+1)/2}$ sends $u$ to $\zeta^{v_Q(u)e}{\cdot}u$, which
we know to be $\zeta u$ itself. Hence:
\[
  v_{\overline{\pi}(Q)}(f) = v_Q(u) = e^{-1} = (\# H_{\pi (P)} )^{-1}
  \quad\text{in $\Z/((p+1)/2)\Z$}\,.
\]
We finally obtain for our Drinfeld component $D/H$ over
$\overline{\F}_p$ the equation
\begin{eqnarray}
\label{equationD/H}
u^{\frac{p+1}{2}} =\prod_{R\in H\backslash\PPP^1 (\F_p ) ,\, \phi(R)
  \neq \infty} (t -\phi (R))^{1/\# H_R} \,,
\end{eqnarray}
where the product is over a set of representatives $R$ with
$\phi(R)\neq\infty$ for the $H$-orbits of~$\PPP^1(\F_p)$, and where
$1/\# H_R$ is lift in $\Z$ of the inverse of $\# H_R$
in~$(\Z/((p+1)/2)\Z)^\times$.

  (Notice that in all cases below, $H_R$ is the isotropy group of the {\it intersection} of our exceptional groups {\it with  
$\SL_2 (\F_p )$}. In particular, the cases $p\equiv 11$ or $19$ {\rm mod} $12$ in Section~\ref{sectionS4} below (group ${\mathfrak{S}_4}$) 
should cause no worries with respect to the condition that $\# H_R$ is invertible {\rm mod} $(p+1)/2$.)

In next sections we illustrate this method by providing a few numerical
examples, constructing explicitly some $\phi$ and
equations~(\ref{equationD/H}) for each case $H={\mathfrak{A}_4}$,
${\mathfrak{S}_4}$ or ${\mathfrak{A}_5}$.

\subsection{${\mathfrak{A}_4}$}
\label{sectionA4}
We first notice that the fact $\mathfrak{A}_4$ has no subgroup of
index $2$ implies $\mathfrak{A}_4$ in fact belongs to the subgroup
${\mathrm{SL}}_2 (\F_{p}) /\{ \pm 1\}$ of ${\mathrm{GL}}_2 (\F_{p}) /{\F_p^*}$. 
The smallest number field over which the corresponding modular curve has a geometrically connected model is
therefore the quadratic subfield of~$\Q (\mu_p )$.

It follows from~\cite{La76}, proof of Theorem~2.3 on p.~186 of
Chapter~XI, that there are three orbits of elements in
$\PPP^1 (\F_{p^2} )$ with non-trivial isotropy subgroups for the
action of $\mathfrak{A}_4$ in ${\mathrm{PGL}}_2 (\F_{p^2})$. Those
isotropy subgroups have order~$2$, $3$ and~$3$ (cf. case~(iii) of the
Lemma after Theorem~2.3 quoted above); we call them~$G_{2}$, $G_{3,1}$
and~$G_{3,2}$. In $\PPP^1 (\F_{p^2} )$, there is therefore one orbit
of size~$6$ (call it~$O_2$), two of size~$4$ (call them~$O_{3,1}$
and~$O_{3,2}$), and $(p^2 -13)/12$ orbits of size~$12$ (homogeneous
spaces under action of~$\mathfrak{A}_4$).  Restricting that
combinatorics to $\PPP^1 (\F_p )$ sums-up as:

\medskip

\begin{center}

\begin{tabular}{|c||c|c|}
\hline $p$  {\rm mod} $12$ & exceptional orbits in $\PPP^1 (\F_p )$ &   total number $N_p$ of orbits under ${\mathfrak{A}}_4$ \\
\hline   
\hline
1 &      $O_2$ , $O_{3,1}$, $O_{3,2}$   & $(p+23)/12$  \\
\hline
5 &    $O_2$  & $(p+7)/12$ \\
\hline
7 &     $O_{3,1}$, $O_{3,2}$  & $(p+17)/12$ \\
\hline
11 &  none  &  $(p+1)/12$  \\
\hline 
\end{tabular} .

\end{center}

%\medskip 
%
% 
 
\begin{theo} 
\label{A4} 
Let $p>3$ be a prime, and let $[\Gamma_{\mathfrak{A}_4} (p)]$ be the moduli problem over $\Z [1/p]$ associated with 
$\Gamma_{\mathfrak{A}_4} (p)$. Let ${\cal P}$ be a representable moduli problem, which is finite \'etale over 
$({\mathrm{Ell}})_{/\Z_p}$.  
Let $\overline{{\cal M}} ({\cal P}, \Gamma_{\mathfrak{A}_4} (p)) =\overline{{\cal M}} ({\cal P}, \Gamma (p))/ 
\Gamma_{\mathfrak{A}_4} (p)$ be the associated compactified fine moduli space. 
Let $W$ be a totally ramified extension of $\Z_p^{\mathrm{ur}}$ of degree $(p^2 -1)/2$ (e.g. $W=\Z_p^{\mathrm{ur}} [(1-\zeta_p )^{2/(p+1)}] $)
as in Theorem~\ref{NS}. 
 
\medskip

   Then $\overline{{\cal M}} ({\cal P}, \Gamma_{\mathfrak{A}_4} (p))$ has a semistable model over $W$ whose special 
fiber is made of $2N_p$ vertical Igusa parts (for $N_p \sim p/12$ as in the above array)  %  
which are linked by horizontal Drinfeld components above each supersingular points 
of $\overline{{\cal M}} ({\cal P})$ via the projection $\overline{{\cal M}} ({\cal P}, \Gamma_{\mathfrak{A}_4} (p)) \to 
\overline{{\cal M}} ({\cal P})$. The geometric vertical parts
are almost all copies of quotient enhanced Igusa curves $\overline{{\cal M}} ({\cal P}, {\mathrm{Ig}} (p)/
\{ \pm1 \} )_{\overline{\F}_p} $, except that:

\begin{itemize}

\item if $p\equiv 1$ {\rm mod} $12$,  two of them are $\overline{{\cal M}} ({\cal P}, 
{\mathrm{Ig}} (p) /C_4 )_{\overline{\F}_p} $, and four are
$\overline{{\cal M}} ({\cal P}, {\mathrm{Ig}} (p)/ C_6  )_{\overline{\F}_p} $, for $C_*$ a cyclic automorphism group of 
order $*$; 

\item if $p\equiv 5$ {\rm mod} $12$, there are two exceptional Igusa parts, which are $\overline{{\cal M}} ({\cal P}, 
{\mathrm{Ig}} (p)/ C_4 )_{\overline{\F}_p} $;

\item if $p\equiv 7$ {\rm mod} $12$, there are four exceptional Igusa parts, copies of $\overline{{\cal M}} ({\cal P}, 
{\mathrm{Ig}} (p)/ C_6 )_{\overline{\F}_p} $.

\end{itemize}

\medskip

%
%\medskip

Singular points located on components of shape $\overline{{\cal M}} ({\cal P}, {\mathrm{Ig}} (p)/ C_r )_{\overline{\F}_p}$
 have local equations 
 $$
 W[[x,y]]/(xy-\pi_0^r ), r=1, 2 {\mathrm{\ or\ }} 3
 $$
 for $\pi_0$ as usual a uniformizer of $W$ (e.g. $\pi_0 =(1-\zeta_p )^{2/(p+1)}$). 

%      
  
%    
%$$
%
\end{theo}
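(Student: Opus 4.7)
The plan is to reproduce the strategy used in the proofs of Theorems~\ref{NS}, \ref{NS+}, \ref{S} and \ref{S+}: start from the semistable model $\overline{\cal M}({\cal P}, \Gamma(p))^{\mathrm{st}}$ of Section~\ref{X(p)} and take the quotient by the group $\Gamma_{\mathfrak{A}_4}(p) \subseteq \GL_2(\F_p)$. Since $p>3$, the order of $\mathfrak{A}_4$ is prime to $p$, so the quotient exists and preserves semistability; it thus suffices to analyse the action of $\Gamma_{\mathfrak{A}_4}(p)$ on the vertical Igusa parts and on the horizontal Drinfeld components of Katz--Mazur's model, together with the local action at their intersection points.

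Because $\mathfrak{A}_4$ has no index-$2$ subgroup, it lifts uniquely to the binary tetrahedral subgroup $\widetilde{\mathfrak{A}_4} \subseteq \SL_2(\F_p)$, so $\Gamma_{\mathfrak{A}_4}(p) = \widetilde{\mathfrak{A}_4}\cdot \F_p^*$ and $\det\colon \Gamma_{\mathfrak{A}_4}(p) \to \F_p^*$ has image $(\F_p^*)^2$. Applying the action formula $(i,P) \mapsto (i\det g, g^{-1}P)$ of Section~\ref{GL2} to the Igusa parts $\mathrm{Ig}_{i,P}$ of Theorem~\ref{KatzMazur1}, orbits on $\F_p^*\times \PPP^1(\F_p)$ decouple as two determinant-cosets times the $\mathfrak{A}_4$-orbits on $\PPP^1(\F_p)$ already counted in the table preceding the statement, giving exactly $2N_p$ vertical parts. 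For each such orbit the stabilizer is the preimage in $\widetilde{\mathfrak{A}_4}$ of the isotropy of $P$ in $\mathfrak{A}_4$: generically $\{\pm 1\}$, while the exceptional orbits $O_2$ and $O_{3,i}$ lift isotropies $C_2$ and $C_3$ to $C_4$ and $C_6$ respectively (as $\widetilde{\mathfrak{A}_4}$ is the unique non-split central extension of $\mathfrak{A}_4$ by $\{\pm 1\}$). This accounts for the different cases of the theorem according to $p\bmod 12$.

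For the Drinfeld components, each $D_{i,s}$ is stabilized by the full $\Gamma_{\mathfrak{A}_4}(p) \cap \SL_2(\F_p) = \widetilde{\mathfrak{A}_4}$, and the relation $r(g)\colon D_{i,s} \to D_{i\det g, s}$ ensures that a single geometric Drinfeld component $D_s := D/\widetilde{\mathfrak{A}_4}$ links the two determinant-cosets of vertical parts above each supersingular point. The existence and smoothness of $D_s$, and the correct gluing data, are produced by the cartesian/cocartesian diagram of Section~\ref{Computationofthe}, presenting $D_s$ as a $\mu_{(p+1)/2}$-covering of $\PPP^1/\mathfrak{A}_4$ whose ramification is read off from the exceptional isotropies tabulated above; here the statement only asserts existence, so one defers the explicit equations to the numerical examples.

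Finally, the widths are obtained by a local analysis at each crossing. A singular point of $\overline{\cal M}({\cal P}, \Gamma(p))^{\mathrm{st}}$ on $D_{i,s}\cap \mathrm{Ig}_{i,P}$ has local equation $W[[x,y]]/(xy-\pi)$ of width $1$, and the common stabilizer of the two branches is the subgroup $C_r\subseteq \widetilde{\mathfrak{A}_4}$ stabilizing $P\in \PPP^1(\F_p)$, with $r\in\{2,4,6\}$ equal to the stabilizer computed above. By~(\ref{gonalpha}), conjugating $C_r$ into the Borel at $P$ shows that $C_r$ acts diagonally as $(x,y)\mapsto(\zeta x,\zeta^{-1}y)$ for $\zeta$ a primitive $r$-th root of unity, so the ring of invariants is generated by $X=x^r$, $Y=y^r$ and $xy=\pi$, giving the quotient local ring $W[[X,Y]]/(XY-\pi^r)$ as claimed. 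The main obstacle in this plan is really the Drinfeld step: unlike the Cartan cases where an explicit diagonalization yielded closed-form equations, for $\mathfrak{A}_4$ one must rely on the abstract ramification bookkeeping of Section~\ref{Computationofthe} to certify that the quotient is a smooth curve of the expected shape.
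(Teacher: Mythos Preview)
Your proposal is correct and follows essentially the same approach as the paper: use that $\mathfrak{A}_4$ has no index-$2$ subgroup to place it inside $\SL_2(\F_p)/\{\pm 1\}$, deduce that the determinant image is $(\F_p^*)^2$, index the vertical parts by pairs (orbit on $\PPP^1(\F_p)$, determinant coset) to get $2N_p$ parts, and identify the stabilizers as $\{\pm 1\}$, $C_4$, or $C_6$ via the lift to the binary tetrahedral group. Your local width computation is in fact more explicit than the paper's, which simply refers back to the earlier Cartan arguments; and you are right that the theorem does not claim explicit Drinfeld equations here, so deferring those to Section~\ref{Computationofthe} is exactly what the paper does.
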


\begin{proof}
As already remarked, $\mathfrak{A}_4$ has no subgroup of index $2$ so 
that $\mathfrak{A}_4$ in fact belongs to ${\mathrm{SL}}_2 (\F_{p}) /\{ \pm 1\}$, and the image under the determinant of the 
full group $\Gamma_{\mathfrak{A}_4} (p)$ consists of all the squares of $\F_p^*$. Therefore vertical parts
of our quotient curve can be indexed by the set of pairs $\{ ({\cal O}, d ) \}$, where ${\cal O}$ runs through the set
of orbits of $\PPP^1 (\F_p )$ under the action of $\mathfrak{A}_4$, and $d$ runs through $\F_p^*$ modulo squares.

Igusa parts associated with orbits having the generic trivial isotropy group have stabilizer $\pm 1$ in 
$\Gamma_{\mathfrak{A}_4} (p)$, so they are isomorphic to $\overline{{\cal M}} ({\cal P}, 
{\mathrm{Ig}} (p)/\{ \pm 1\} 
)_{\overline{\F}_p }$. As for the Igusa components associated with $G_{2}$ and $G_{3,i}$, they are isomorphic to some $
\overline{{\cal M}} ({\cal P}, 
{\mathrm{Ig}} (p)/C_4 )_{\overline{\F}_p }$ and $\overline{{\cal M}} ({\cal P}, {\mathrm{Ig}} (p)/C_6 )_{\overline{\F}_p }$ 
respectively, for $C_r$ a cyclic
automorphism group of oder $r$. The rest goes as in the proof of the
previous theorems.~$\Box$
\end{proof} 

\medskip

Now for equations of Drinfeld components. It follows from Theorem~6.1
of~\cite{Ch99} that a system of generators for the image of
${\mathfrak{A}_4}$ in ${\mathrm{SL}}_2 (\F_{p}) /\{ \pm 1\}$ can be taken as any
$(S,T)$ with $S$ of (projective) order $3$, $T$ of order $4$, and $ST$
has order $3$, that is, $S$, $T$ and $ST$ are matrices with
determinant $1$ and trace $\pm 1$, $0$ and $\pm 1$ respectively.  One
readily checks with that numerical criterion that ${\mathfrak{A}_4}$
has a model in ${\mathrm{SL}}_2 (\F_{p}) /\{ \pm 1\}$ (although not in
${\mathrm{SL}}_2 (\Z )/\{ \pm 1 \}$). When $p\equiv 1$ {\rm mod} $3$, one can for
instance take
 $$
 S=
\left( 
\begin{array}{cc}
\zeta_3  & 0\\
-1 & \zeta_3^2 
 \end{array}
\right) ; \ 
T=
\left( \begin{array}{cc}
0  &-1\\
1 & 0 
 \end{array}
\right) 
$$
for $\zeta_3$ some primitive third root of unity. In order to
write-down a function $\phi$ as in equation~(\ref{eq:quotient_phi}) it
is enough to find representatives $a_1$ and $a_2$ of two different
$\mathfrak{A}_4$-orbits in $\PPP^1(\F_p)$ and take
$$
\phi (t) =\prod_{g\in {{\mathfrak{A}}_4}} (t-g{\cdot}a_1 ) (t-g{\cdot}a_2 )^{-1} 
$$ 
from which can write explicit forms of
equation~(\ref{equationD/H}). As for numerical examples with $p=13$ or
$p=103$ we compute that:

\medskip

{\bf if $p=13$}, the set $\PPP^1 (\F_{13})$ decomposes in three
orbits: $O_{3,1} =\{ 0, \infty ,9,10 \}$, $O_{3,2} =\{ 1,2,6,12 \}$
and $O_{2} =\{ 3,4,5,7,8,11 \}$ under our action
of~${\mathfrak{A}}_4$, whence, as in~(\ref{eq:quotient_phi}), for
instance a function~$\phi$:
$$
\phi (t)=\frac{[(t-1)(t-2)(t-6)(t-12)]^3}{[ (t-3)(t-4)(t-5)(t-7) (t-8)(t-11)]^2} 
$$      
for which $\phi (\PPP^1 (\F_p ))=\{0, 1, \infty\}$ (the images of the
orbits of~$1$, $\infty$ and~$3$), with ramification indices $3$, $3$
and~$2$, respectively. The inverses in $\Z/7\Z$ of these are $5$, $5$
and~$4$, respectively. We therefore obtain from~(\ref{equationD/H}) the affine
singular model: 
$$
u^7 =t^5 (t-1)^5
$$
for ${\mathfrak{A}}_4$-exceptional Drinfeld components in level~$13$
(with suitable rigidification).

\medskip

{\bf If $p=103$}, we consider for instance the orbits of $0$ and $1$ to obtain
$$
\phi (t)=\frac{[t(t-56)(t-57)]^3}{[ (t-1)(t-10)(t+1)(t-72)]^3} 
$$   
whence 
$$
\phi (\PPP^1 (\F_p )) =\{0, \infty, 3, -3, -1, 22, 39, -14, -39, 10\} 
$$
with respective ramification indices $3,3,1,1,1,1,1,1,1,1$ having
inverses $35, 35, 1,1,1,1,1,1,1,1$ {\rm mod}~$52$, and an equation for ${\mathfrak{A}}_4$-Drinfeld components 
in characteristic $103$ which is
$$
u^{52} =t^{35} (t-3) (t+3)(t+1)(t-22)(t-39)(t+14)(t+39)(t-10)\,.
$$

\subsection{${\mathfrak{S}_4}$}
\label{sectionS4}

Notice that $\mathfrak{S}_4$ belongs to ${\mathrm{SL}}_2 (\F_{p}) /\{ \pm 1\}$ if (and only) if $p\equiv \pm 1$ {\rm mod} $8$, 
cf.~Theorems~1 \& 2 of Feit's appendix in~\cite{La76}, pp.~201 \& 202). If $p\equiv 3$ or $5$ {\rm mod} $8$ then
$\mathfrak{S}_4 \cap {\mathrm{SL}}_2 (\F_{p}) /\{ \pm 1\} =\mathfrak{A}_4$, so  the relevant curve 
$X_{\mathfrak{S}_4} (p)$ is a form of the curve $X_{\mathfrak{A}_4} (p)$ studied in paragraph~\ref{sectionA4} above,
and the former curve does have a geometrically integral model over $\Q$. 
 
    Now~\cite{La76}, proof of Theorem~2.3 on p.~187 of Chapter~XI, gives that there are three orbits of elements 
in $\PPP^1 (\F_{p^2} )$ with non-trivial isotropy subgroups for the action of $\mathfrak{S}_4$ in ${\mathrm{PGL}}_2 
(\F_{p^2})$, and those isotropy subgroups have order $2$, $3$ and $4$ (cf. case (iv) of the Lemma after Theorem~2.3 
quoted above): we shall denote them by $G_2$, $G_3$ and $G_4$ respectively. In $\PPP^1 (\F_{p^2} )$, there is
therefore one orbit of size $12$, one of size $8$, one of size $6$, and $(p^2  -25)/24$ of size $24$ (which are 
homogeneous spaces under action of $\mathfrak{S}_4$). We denote by $O_{2}$, $O_3$ and $O_4$ the exceptional orbits
of order $12$, $8$ and $6$ respectively. Restricting that  combinatorics to~$\PPP^1 (\F_p )$ gives 

\medskip

\begin{center}

\begin{tabular}{|c||c|c|}
\hline $p$  {\rm mod} 24 & exceptional orbits in $\PPP^1 (\F_p )$ &   total number $N_p$ of orbits under ${\mathfrak{S}}_4$ \\
\hline   
\hline
1 & $O_2$ , $O_3$, $O_4$ & $ (p+47) /24$ \\
\hline   
5 & $O_4$ & $(p+19)/24$ \\
\hline
7 & $O_3$ & $(p+17)/24$ \\
\hline 
11 & $O_2$ & $(p+13)/24$ \\
\hline 
13 & $O_3$, $O_4$ & $(p+35)/24$ \\
\hline 
17 & $O_2$, $O_4$ & $(p+31)/24$ \\
\hline 
19 & $O_2$, $O_3$ & $(p+5)/24$ \\
\hline 
23 & none & $(p+1)/24$ \\
\hline 
\end{tabular}  .

\end{center}

\medskip 

\begin{theo} 
\label{S4} 
Let $p>3$ be a prime which is congruent to $\pm 1 \mod 8$, and let $[\Gamma_{\mathfrak{S}_4} (p)]$ be the moduli problem over $\Z [1/p]$ associated with 
$\Gamma_{\mathfrak{S}_4} (p)$.
Let ${\cal P}$ be a representable moduli problem, which is finite \'etale over $({\mathrm{Ell}})_{/\Z_p}$.  
Let $\overline{{\cal M}} ({\cal P}, \Gamma_{\mathfrak{S}_4} (p)) =\overline{{\cal M}} ({\cal P}, \Gamma (p))/ 
\Gamma_{\mathfrak{S}_4} (p)$ be the associated compactified fine moduli space. 
Let $W$ be a totally ramified extension of $\Z_p^{\mathrm{ur}}$ of degree $(p^2 -1)/2$, with uniformizer $\pi_0$, as in Theorem~\ref{NS}. % ( 

\medskip

   Then $\overline{{\cal M}} ({\cal P}, \Gamma_{\mathfrak{S}_4} (p))$ has a semistable model over $W$ whose special 
fiber is made of vertical Igusa parts, which are linked by horizontal 
Drinfeld components above each supersingular points of $\overline{{\cal M}} ({\cal P})$ via the 
projection $\overline{{\cal M}} ({\cal P}, \Gamma_{\mathfrak{S}_4} (p)) \to \overline{{\cal M}} ({\cal P})$.

\medskip

Almost all vertical parts are isomorphic to quotient enhanced Igusa curves $\overline{{\cal M}} ({\cal P}, {\mathrm{Ig}} (p)/\{ 
\pm1 \} )_{\overline{\F}_p} $, except that:
\begin{itemize}
\item if $p\equiv 1\ {\mathrm{mod}}\ 24$, there are six exceptional Igusa part; two are copies of $\overline{{\cal M}} ({\cal P},
{\mathrm{Ig}} (p)/C_4 )_{\overline{\F}_p} $, two are isomorphic to $\overline{{\cal M}} ({\cal P}, {\mathrm{Ig}} (p)/
C_6 )_{\overline{\F}_p}$, and two are $\overline{{\cal M}} ({\cal P}, {\mathrm{Ig}} (p)/ C_8  )_{\overline{\F}_p} $, where 
$C_*$ denotes a cyclic automorphism group of order $*$. The total number of Igusa parts is $2N_p$ (for
$N_p \simeq p/24$ the number of orbits as indicated in the array before our theorem);
\item if $p\equiv 5\ {\mathrm{mod}}\ 24$, there is only one exceptional Igusa part, which is $\overline{{\cal M}} ({\cal P}, 
{\mathrm{Ig}} (p)/ C_4 )_{\overline{\F}_p} $. The total number of Igusa parts is $2N_p -1$; 
\item if $p\equiv 7\ {\mathrm{mod}}\ 24$, there are two exceptional Igusa parts, which are a copies of $\overline{{\cal M}} 
({\cal P}, {\mathrm{Ig}} (p)/ C_6 )_{\overline{\F}_p}$.   The total number of Igusa parts is $2N_p$;
\item if $p\equiv 11\ {\mathrm{mod}}\ 24$, there is no exceptional Igusa part. The total number of Igusa parts is $2N_p -1$;  
\item if $p\equiv 13\ {\mathrm{mod}}\ 24$, there are three exceptional Igusa parts, of which two are copies
of $\overline{{\cal M}} ({\cal P}, {\mathrm{Ig}} (p)/ C_6 )_{\overline{\F}_p}$, and one is isomorphic to 
$\overline{{\cal M}} ({\cal P}, {\mathrm{Ig}} (p)/ C_4 )_{\overline{\F}_p}$. The total number of Igusa parts is $2N_p -1$;  
\item if $p\equiv 17\ {\mathrm{mod}}\ 24$, there are four exceptional Igusa parts, of which two are copies of 
$\overline{{\cal M}} ({\cal P}, {\mathrm{Ig}} (p)/ C_4 )_{\overline{\F}_p}$ and two are $\overline{{\cal M}} ({\cal P}, 
{\mathrm{Ig}} (p)/ C_8 )_{\overline{\F}_p}$. The total number of Igusa parts is $2N_p$;  
\item if $p\equiv 19\ {\mathrm{mod}}\ 24$, there are two exceptional Igusa parts, which are copies of $\overline{{\cal M}} 
({\cal P}, {\mathrm{Ig}} (p)/ C_6 )_{\overline{\F}_p}$. The total number of Igusa parts is $2N_p -1$;  
\item if $p\equiv 23\ {\mathrm{mod}}\ 24$, the total number of Igusa parts is $2N_p$, and none is exceptional.
\end{itemize} 

\medskip

  Singular points located on components of shape $\overline{{\cal M}} ({\cal P}, {\mathrm{Ig}} (p)/ C_r )_{\overline{\F}_p}$
 have local equations 
 $$
 W[[x,y]]/(xy-\pi_0^r ), r=1, 2, 3 {\mathrm{\ or\ }} 4.
 $$
%   
%    
%$$
%
\end{theo}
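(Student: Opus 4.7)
The plan is to mimic the proof of Theorem~\ref{A4}, substituting $\mathfrak{S}_4$ for $\mathfrak{A}_4$ and carefully enumerating the new orbit data. Start from the semistable model $\overline{\cal M}({\cal P},\Gamma(p))^{\mathrm{st}}$ of Section~\ref{X(p)} and take the quotient by $\Gamma_{\mathfrak{S}_4}(p)$. Since by Feit's criterion $\mathfrak{S}_4$ sits inside $\mathrm{PSL}_2(\F_p)$ exactly when $p \equiv \pm 1\mod 8$, the same scalar-times-$\mathrm{SL}_2$ argument used for $\mathfrak{A}_4$ yields $\det(\Gamma_{\mathfrak{S}_4}(p))=(\F_p^*)^2$. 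Consequently the vertical Igusa parts in the quotient are indexed by the $\Gamma_{\mathfrak{S}_4}(p)$-orbits on $\F_p^*\times\PPP^1(\F_p)$, i.e.\ by pairs $({\cal O},d)$ with ${\cal O}$ an $\mathfrak{S}_4$-orbit on $\PPP^1(\F_p)$ and $d\in\F_p^*/(\F_p^*)^2$.

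Next I would identify each vertical Igusa part as a quotient of $\overline{\cal M}({\cal P},\mathrm{Ig}(p))$. For a point $P\in\PPP^1(\F_p)$ with trivial projective isotropy, the stabilizer in $\Gamma_{\mathfrak{S}_4}(p)\cap \mathrm{SL}_2(\F_p)$ is $\{\pm 1\}$, giving the generic component $\overline{\cal M}({\cal P},\mathrm{Ig}(p)/\{\pm 1\})$. For the exceptional orbits with projective isotropy $G_2$, $G_3$ or $G_4$ the corresponding stabilizers are cyclic of order $4$, $6$ or $8$, producing Igusa components $\overline{\cal M}({\cal P},\mathrm{Ig}(p)/C_4)$, $\overline{\cal M}({\cal P},\mathrm{Ig}(p)/C_6)$ and $\overline{\cal M}({\cal P},\mathrm{Ig}(p)/C_8)$. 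Reading off which of $O_2,O_3,O_4$ actually lies in $\PPP^1(\F_p)$ (as opposed to only in $\PPP^1(\F_{p^2})$) from the table preceding the statement, and keeping track of which such orbits are split by the $(\F_p^*)^2$-factor, yields the eight cases of the statement indexed by $p\bmod 24$.

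For the Drinfeld horizontal components I would apply directly the general recipe of Section~\ref{Computationofthe}: with $H=\Gamma_{\mathfrak{S}_4}(p)\cap\mathrm{SL}_2(\F_p)$, choose two $H$-orbits on $\PPP^1(\F_p)$ not containing $\infty$, form the rational function $\phi$ of~(\ref{eq:quotient_phi}), and deduce the singular model~(\ref{equationD/H}); the action at supersingular $j\in\{0,1728\}$ is controlled by the explicit formulas of Section~\ref{0-1728}, which as in Theorems~\ref{NS0+} and~\ref{S0+} preserve the shape of the Drinfeld equation and only modify the exponent through the automorphism order $e(s)$. The widths of the singularities follow from the same blow-up / normalization procedure: a crossing on a component $\overline{\cal M}({\cal P},\mathrm{Ig}(p)/C_r)$ comes from a $C_r$-quotient of the model $\alpha^p\beta-\alpha\beta^p=1$ of Section~\ref{Drinfeld}, so the uniformizer $\pi=(1-\zeta_p)^{1/(p+1)}$ gets absorbed into an $r$-th power, yielding the local equation $W[[x,y]]/(xy-\pi^r)$ exactly as in the Cartan cases.

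The main obstacle is really organizational rather than conceptual: the enumeration of $\mathfrak{S}_4$-orbits in $\PPP^1(\F_p)$ bifurcates into the eight residue classes $p\bmod 24$, and for each class one must pair Lang's isotropy data with the $(\F_p^*)^2$-doubling to decide whether an exceptional orbit contributes one or two Igusa parts to the special fiber. Once this bookkeeping is done, the three ingredients — stabilizer identification, the Drinfeld equation from~(\ref{equationD/H}), and the width computation via blow-up and $\pi$-extension — plug in uniformly and yield the stated description.
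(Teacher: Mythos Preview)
Your outline matches the paper's approach for the residues $p\equiv 1,7,17,23\bmod 24$ (i.e.\ $p\equiv\pm1\bmod 8$), and there it is correct: $\mathfrak{S}_4\subset\mathrm{PSL}_2(\F_p)$, $\det(\Gamma_{\mathfrak{S}_4}(p))=(\F_p^*)^2$, every $\mathfrak{S}_4$-orbit on $\PPP^1(\F_p)$ contributes two Igusa parts, and the $\SL_2$-stabilizer over a point with projective isotropy $G_n$ is $C_{2n}$.

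The gap is in the four remaining residues $p\equiv 5,11,13,19\bmod 24$, which the statement and the paper's proof do treat (despite the hypothesis in the first line). For these $p$ one has $\mathfrak{S}_4\cap\mathrm{PSL}_2(\F_p)=\mathfrak{A}_4$, so your assertion $\det(\Gamma_{\mathfrak{S}_4}(p))=(\F_p^*)^2$ fails, and so does the uniform claim that the stabilizers are $C_4,C_6,C_8$. The paper's argument here is to take the generator of each $G_n$ in the form $\left(\begin{smallmatrix}\zeta_n&0\\0&1\end{smallmatrix}\right)$ and test whether $\zeta_n$ is a square in~$\F_p^*$. When it is not (e.g.\ $\zeta_4$ for $p\equiv 5\bmod 24$, or $-1$ for $p\equiv 11\bmod 24$), two things happen simultaneously: the element of non-square determinant identifies the two square-class copies of that orbit, so it contributes \emph{one} Igusa part rather than two (whence the totals $2N_p-1$); and only the even powers of the generator lie in $\SL_2$, so the stabilizer drops from $C_{2n}$ to $C_n$ (whence the $C_4$ rather than $C_8$ for $p\equiv 5$, and the absence of any exceptional part for $p\equiv 11$). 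Your phrase ``keeping track of which such orbits are split by the $(\F_p^*)^2$-factor'' gestures at this, but the mechanism you set up (all determinants square, all stabilizers $C_{2n}$) cannot produce it; you need the square/non-square case analysis of $\det$ on the isotropy generators that the paper carries out.
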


\begin{proof}

   One first readily checks that the isotropy groups $G_n$, $n=2, 3$ or $4$, are all cyclic, with order $n$. So
a generator for $G_n$ can be taken as $\left( {\zeta_n \atop 0}{0\atop 1} \right)$, for $\zeta_n$ some primitive
$n^{\mathrm{th}}$-root of unity in $\F_p$. 

   For $p\equiv 1, 7, 17$ {\rm mod} $24$, one computes at hand that the determinant of those generators 
are squares in  $\F_p$. (This again could also have been derived from the fact that  $\mathfrak{S}_4$ belongs to 
${\mathrm{SL}}_2 (\F_{p}) /\{ \pm 1\}$ if (and only) if $p\equiv \pm 1$ {\rm mod} $8$, cf.~Theorems~1 \& 2 in~\cite{La76}, 
pp.~201 \& 202).  Whence the vertical components, for primes in those congruences classes (and 
$p\equiv 23$ {\rm mod} $24$) in our theorem. 

   For the remaining classes we proceed with case-by-case examinations.   

If $p\equiv 5$ {\rm mod} $24$, a generator for the non-trivial isotropy group $G_4$ in $\mathfrak{S}_4$ can be taken as
$\left( {\zeta_4 \atop 0}{0\atop 1} \right)$, whose determinant is not a square in $\F_p$. So there is one 
exceptional Igusa part, which is a  copy of $\overline{{\cal M}} ({\cal P}, {\mathrm{Ig}} (p)/ C_4  )_{\overline{\F}_p}$.

If $p\equiv 11$ {\rm mod} $24$, a generator for the non-trivial isotropy group $G_2$ in $\mathfrak{S}_4$ can be taken as
$\left( {-1 \atop 0}{0\atop 1} \right)$, whose determinant is not a square in $\F_p$. So the corresponding Igusa part
is just a plain copy of $\overline{{\cal M}} ({\cal P}, {\mathrm{Ig}} (p)/\{ \pm 1 \} )_{\overline{\F}_p}$. 

If $p\equiv 13$ {\rm mod} $24$, the elements of $G_3$ in $\mathfrak{S}_4$ have square determinant,
so the corresponding orbits give rise to two exceptional Igusa parts which are  copies of $\overline{{\cal M}} ({\cal P}, 
{\mathrm{Ig}} (p)/ C_6 )_{\overline{\F}_p}$. On the other hand, the determinant of $\left( {\zeta_4 \atop 0}{0\atop 1} 
\right)$ is  a non-square in $\F_p$. So $G_4$ gives rise to a unique exceptional Igusa part, isomorphic to 
$\overline{{\cal M}} ({\cal P}, {\mathrm{Ig}} (p)/ C_4 )_{\overline{\F}_p}$.

If $p\equiv 19$ {\rm mod} $24$, the group $G_3$, in a similar fashion to the previous case, gives rise to two exceptional 
Igusa parts which are  copies of $\overline{{\cal M}} ({\cal P},  {\mathrm{Ig}} (p)/ C_6 )_{\overline{\F}_p}$. 
Similarly to the case $p\equiv 13$ {\rm mod} $24$, on the other hand, $G_2$ gives rise to one plain Igusa part
$\overline{{\cal M}} ({\cal P}, {\mathrm{Ig}} (p)/ \{ \pm 1\} )_{\overline{\F}_p}$. 
 
\medskip 
 
  The shape of singularities easily follows from our description of local isotropy groups.   \hspace{1cm} $\Box$

\end{proof} 
 
\bigskip
 
   We compute equations of Drinfeld components. Using Theorem~6.1 of~\cite{Ch99}, we can take as a system of 
generators for the image of ${\mathfrak{S}_4}$ in ${\mathrm{SL}}_2 (\C )/\{ \pm 1\}$ any set  $(S,T)$ whose traces satisfy
$$
t_S^2 +t_T^2 +t_{ST}^2 -t_S t_T t_{ST} =3 {\hspace{0.5cm}}  {\mathrm{and}}  {\hspace{0.5cm}}  {t_S},  {t_T},  {t_{ST}}
\in \{ 0, \pm 1, \pm \sqrt{2}  \} .
$$
One readily checks with that numerical criterion that ${\mathfrak{S}_4}$ has for instance a model in ${\mathrm{SL}}_2 
(\overline{\Z} )/\{ \pm 1\}$ with generators:
 $$
 S=
\left( 
\begin{array}{cc}
\sqrt{2}  &1\\
-1 & 0 
 \end{array}
\right) {\hspace{0.5cm}}  {\mathrm{and}}  {\hspace{0.5cm}}
T=
\left( \begin{array}{cc}
1  &i\\
i & 0 
 \end{array}
\right) 
$$
which when $p\equiv 1$ {\rm mod} $8$ gives by reduction an easy model in ${\mathrm{SL}}_2 (\F_p )/\{ \pm 1\}$.

\medskip

{\bf If $p=73$}, we can compute the orbit of $0$ and that of $1$, which respectively are
$$
A:= {\mathfrak{S}_4} \cdot 0 = \{ 0, 5, 16, 17, 26, 32, 39, 46, 52, 61, 62,  \infty   \}
$$
\begin{center}
and 
\end{center}
$$
B:= {\mathfrak{S}_4} \cdot 1 =\{ 1, 4, 6, 13, 18, 19, 23, 27, 28, 31, 33, 34, 36, 42, 44, 45, 47, 50, 51, 55, 59, 60, 65, 72   \}
$$
so that, setting $\phi (t) =\prod_{a\in A\setminus \{ \infty \}} (t-a )^2 \prod_{a\in B} (t-b )^{-1}$, we have
$\phi (\PPP^1 (\F_p ))= \{ 48, 14, 0, 58, \infty   \} $, whose elements have respective ramification indices
$4, 3, 2, 1$ and $1$. The inverse of the latter {\rm mod} $37$ are respectively $28, 25, 19, 1$ and $1$,  
whence the explicit forms 
$$
u^{37} =(t+25)^{28} (t-14)^{25} t^{19} (t+15)
$$ 
%$$
% 
%$$ 
%
of equation~(\ref{equationD/H}) for the ${\mathfrak{S}_4}$-Drinfeld components in level $73$.

\subsection{${\mathfrak{A}_5}$}
\label{sectionA5}

One knows that, for any prime-power $q$, whenever ${\mathfrak{A}_5}$ can be realized as a subgroup of some ${\mathrm{GL}}_2 
(\F_q )/\F_q^*$ then it belongs to ${\mathrm{SL}}_2 (\F_q )/\{ \pm 1\}$, and that is the case if and only if $q$ is $\pm 1$ {\rm mod} $5$ (cf.~\cite{Fe76}, 
Theorem~1 on p.~201). Let us henceforth assume for that subsection~\ref{sectionA5} that $q=p$ is a prime satisfying that congruence 
condition. (We therefore remark that the smallest number field over which the corresponding modular curve has a geometrically integral model 
is the quadratic subfield of $\Q (\mu_p ))$. Again~\cite{La76} (proof of 
Theorem~2.3 on p.~186 of Chapter~XI) gives that there are three orbits of elements in $\PPP^1 (\F_{p^2} )$ with  
non-trivial isotropy subgroups for the action of $\mathfrak{A}_5$ in ${\mathrm{GL}}_2 (\F_{p^2}) /\F_{p^2}^*$, and those isotropy 
subgroups have order $2$, $3$ and $5$ (cf. case (v) of the Lemma after Theorem~2.3 quoted above): call them $G_2$,
$G_3$ and $G_5$. In $\PPP^1  (\F_{p^2} )$, there is therefore one orbit of size $30$, one of size
$20$, one of size $12$, and $(p^2  -61)/60$ orbits of size $60$. We denote by $O_{2}$, $O_3$ and $O_5$ the 
exceptional orbits of order $30$, $20$ and $12$ respectively. The combinatorics implies that, restricting to~$\PPP^1 (\F_p )$:
\medskip

\begin{center}

\begin{tabular}{|c||c|c|}
\hline $p$  {\rm mod} 60 & exceptional orbits  in $\PPP^1 (\F_p )$ &   total number $N_p$ of orbits under ${\mathfrak{A}}_5$ \\
\hline   
\hline
1 & $O_2$ , $O_3$, $O_5$ & $ (p+119) /60$ \\
\hline   
11 & $O_5$ & $(p+49)/60$ \\
\hline 
19 & $O_3$  & $(p+41)/60$ \\
\hline 
29 & $O_2$ & $(p+31)/60$ \\
\hline 
31 & $O_3$ , $O_5$ & $ (p+89) /60$ \\
\hline
41 & $O_2$, $O_5$ & $(p+79)/60$ \\
\hline 
49 & $O_2$, $O_3$ & $(p+71)/60$ \\
\hline 
59 & none & $(p+1)/60$  \\ 
\hline 
\end{tabular} .

\end{center}

\medskip

\begin{theo} 
\label{A5} 
Let $p>3$ be a prime, and let $[\Gamma_{\mathfrak{A}_5} (p)]$ be the moduli problem over $\Z [1/p]$ associated with 
$\Gamma_{\mathfrak{A}_5} (p)$.
Let ${\cal P}$ be a representable moduli problem, which is finite \'etale over $({\mathrm{Ell}})_{/\Z_p}$.  
Let $\overline{{\cal M}} ({\cal P}, \Gamma_{\mathfrak{A}_5} (p)) =\overline{{\cal M}} ({\cal P}, \Gamma (p))/ 
\Gamma_{\mathfrak{A}_5} (p)$ be the associated 
compactified fine moduli space. 
% .
Let $W$ be a totally ramified extension of $\Z_p^{\mathrm{ur}}$ of degree $(p^2 -1)/2$ with uniformizer $\pi_0$, as in Theorem~\ref{NS}. % . 

\medskip

   Then $\overline{{\cal M}} ({\cal P}, \Gamma_{\mathfrak{A}_5} (p))$ has a semistable model over $W$ whose special 
fiber is made of $2N_p$ vertical Igusa parts (for $N_p \sim p/60$ as in the above array), which are linked by 
horizontal Drinfeld components above each supersingular points of $\overline{{\cal M}} ({\cal P})$ via the projection 
$\overline{{\cal M}} ({\cal P}, \Gamma_{\mathfrak{A}_5} (p)) \to \overline{{\cal M}} ({\cal P})$. Almost all
vertical parts
are geometrically isomorphic to quotient enhanced Igusa curves $\overline{{\cal M}} ({\cal P}, {\mathrm{Ig}} (p)/
\{ \pm1 \} )_{\overline{\F}_p} $, except that: 
\begin{itemize}
\item If $p\equiv 1$ {\rm mod} $60$,  two exceptional Igusa parts are $\overline{{\cal M}} ({\cal P}, {\mathrm{Ig}} (p)/C_4 
)_{\overline{\F}_p} $, two are $\overline{{\cal M}} ({\cal P}, {\mathrm{Ig}} (p)/ C_6  )_{\overline{\F}_p} $, and
two are $\overline{{\cal M}} ({\cal P}, {\mathrm{Ig}} (p)/ C_{10}  )_{\overline{\F}_p} $, for $C_*$ a cyclic automorphism 
group of order $*$. 
\item If $p\equiv 11$ {\rm mod} $60$, there are two exceptional Igusa parts, which are $\overline{{\cal M}} ({\cal P}, 
{\mathrm{Ig}} (p)/ C_{10}  )_{\overline{\F}_p} $;
\item If $p\equiv 19$ {\rm mod} $60$, two exceptional Igusa parts are copies of $\overline{{\cal M}} ({\cal P}, 
{\mathrm{Ig}} (p)/ C_6 )_{\overline{\F}_p} $;
\item If $p\equiv 29$ {\rm mod} $60$, two Igusa parts are copies of $\overline{{\cal M}} ({\cal P}, 
{\mathrm{Ig}} (p)/ C_4 )_{\overline{\F}_p} $;  
\item If $p\equiv 31$ {\rm mod} $60$, two Igusa parts are $\overline{{\cal M}} ({\cal P}, 
{\mathrm{Ig}} (p)/ C_6 )_{\overline{\F}_p} $ and two are  $\overline{{\cal M}} ({\cal P}, 
{\mathrm{Ig}} (p)/ C_{10} )_{\overline{\F}_p} $;  
 \item If $p\equiv 41$ {\rm mod} $60$, two Igusa parts are $\overline{{\cal M}} ({\cal P}, 
{\mathrm{Ig}} (p)/ C_4 )_{\overline{\F}_p} $ and two are  $\overline{{\cal M}} ({\cal P}, 
{\mathrm{Ig}} (p)/ C_{10} )_{\overline{\F}_p} $;  
 \item If $p\equiv 49$ {\rm mod} $60$, two Igusa parts are $\overline{{\cal M}} ({\cal P}, 
{\mathrm{Ig}} (p)/ C_4 )_{\overline{\F}_p} $ and two are  $\overline{{\cal M}} ({\cal P}, 
{\mathrm{Ig}} (p)/ C_{6} )_{\overline{\F}_p} $.
\end{itemize} 

\medskip

    Singular points located on components of shape $\overline{{\cal M}} ({\cal P}, {\mathrm{Ig}} (p)/ C_r )_{\overline{\F}_p}$
 have local equations 
 $$
 W[[x,y]]/(xy-\pi_0^r ), r=1, 2, 3 {\mathrm{\ or\ }} 5.
 $$
%    
%$$
%
\end{theo}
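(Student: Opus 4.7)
The plan is to follow the exact template of Theorems~\ref{A4} and~\ref{S4}: start from the stable model $\overline{\cal M}({\cal P},\Gamma(p))^{\mathrm{st}}$ of Section~\ref{X(p)} and form its quotient by $\Gamma_{\mathfrak{A}_5}(p)$, reading off vertical parts from the action on $\F_p^*\times\PPP^1(\F_p)$ indexing the Igusa components, and reading off Drinfeld components via the recipe of Section~\ref{Computationofthe}. The key initial input is the hypothesis $p\equiv\pm 1\ \mathrm{mod}\ 5$ of this subsection, which implies $\mathfrak{A}_5\subset{\mathrm{SL}}_2(\F_p)/\{\pm 1\}$; hence every element of $\Gamma_{\mathfrak{A}_5}(p)$ is a scalar times a determinant-$1$ lift of $\mathfrak{A}_5$, so $\det\Gamma_{\mathfrak{A}_5}(p)=(\F_p^*)^2$.

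First I would count and identify the vertical parts. The action $(i,P)\mapsto(i\det g,g^{-1}P)$ together with $\det\Gamma_{\mathfrak{A}_5}(p)=(\F_p^*)^2$ splits the set of Igusa parts ${\mathrm{Ig}}_{i,P}$ into exactly $2N_p$ orbits (two per orbit of $\mathfrak{A}_5$ on $\PPP^1(\F_p)$), one orbit representative per square class $d\in\F_p^*/(\F_p^*)^2$. For a representative $P$ with trivial projective isotropy the stabilizer of ${\mathrm{Ig}}_{i,P}$ inside $\Gamma_{\mathfrak{A}_5}(p)\cap{\mathrm{SL}}_2(\F_p)$ is just $\{\pm 1\}$, yielding $\overline{\cal M}({\cal P},{\mathrm{Ig}}(p)/\{\pm 1\})_{\overline{\F}_p}$. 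When $P$ lies in one of the exceptional orbits $O_n$ ($n=2,3,5$), the projective isotropy $G_n$ is cyclic of order $n$, and its lift to ${\mathrm{SL}}_2(\F_p)$ is cyclic of order $2n$ (since it contains $-1$), producing a quotient $\overline{\cal M}({\cal P},{\mathrm{Ig}}(p)/C_{2n})_{\overline{\F}_p}$. Cross-referencing this with the congruence table before the theorem gives the case-by-case list of exceptional vertical parts exactly as stated.

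Next I would handle the Drinfeld components. The stabilizer in $\Gamma_{\mathfrak{A}_5}(p)$ of each $D_{i,s}$ is the full intersection $\Gamma_{\mathfrak{A}_5}(p)\cap{\mathrm{SL}}_2(\F_p)$, which is $\mathfrak{A}_5\cdot\{\pm 1\}$. The machinery of Section~\ref{Computationofthe} applied with $H=\mathfrak{A}_5\cdot\{\pm 1\}$ produces a model $u^{(p+1)/2}=\prod_R(t-\phi(R))^{1/\# H_R}$ for the geometric Drinfeld component $D/H$ on $\overline{\cal M}({\cal P},\Gamma_{\mathfrak{A}_5}(p))^{\mathrm{st}}$, where $\phi$ is the rational function of degree $60$ realising the quotient $\PPP^1_{\overline{\F}_p}/\mathfrak{A}_5$ and the exponents $\# H_R\in\{1,2,3,5\}$ are invertible modulo $(p+1)/2$ for all four congruence classes of $p$ modulo $60$ with $p\equiv\pm 1\ \mathrm{mod}\ 5$ (since $\gcd(30,(p+1)/2)=1$ in all relevant cases). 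This is the point that requires the most care and is the main obstacle: one must verify this coprimality uniformly and check that the formalism of Section~\ref{Computationofthe} indeed applies.

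Finally, the widths of the singularities are read off as in Theorems~\ref{NS},~\ref{S},~\ref{A4},~\ref{S4}: the component $\overline{\cal M}({\cal P},{\mathrm{Ig}}(p)/C_r)_{\overline{\F}_p}$ meets the Drinfeld components transversally after the ramified base change of degree $p^2-1$, and the analysis of the local ring at the singular crossing carried out in Section~\ref{Drinfeld} yields local equations $W[[x,y]]/(xy-\pi^r)$ with $r\in\{2,4,6,10\}$ according to the isotropy order $2n$, completing the proof. $\Box$
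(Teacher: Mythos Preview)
Your approach is essentially the same as the paper's: take the quotient of $\overline{\cal M}({\cal P},\Gamma(p))^{\mathrm{st}}$ by $\Gamma_{\mathfrak{A}_5}(p)$, use that $\mathfrak{A}_5\subset\SL_2(\F_p)/\{\pm 1\}$ so that $\det\Gamma_{\mathfrak{A}_5}(p)=(\F_p^*)^2$, and read off the $2N_p$ vertical parts and their stabilizers from the orbit table. The paper's proof is in fact terser than yours---it simply notes that an orbit of size $n\in\{60,30,20,12\}$ gives stabilizer $C_{120/n}$ and defers everything else to the previous theorems.

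One small slip: there are \emph{eight} residue classes modulo $60$ with $p\equiv\pm 1\bmod 5$, not four, and your parenthetical ``$\gcd(30,(p+1)/2)=1$ in all relevant cases'' is false as stated (e.g.\ $p\equiv 59\bmod 60$ gives $30\mid(p+1)/2$). What is actually true---and what you need---is the finer observation that the exceptional orbit $O_n$ lies in $\PPP^1(\F_p)$ only when $p\equiv 1\bmod n$ (visible from the table), and in precisely those cases $n\nmid(p+1)/2$; so the only $\#H_R$ that occur are automatically invertible. This does not affect the proof of the theorem itself, since the statement does not assert explicit equations for the Drinfeld components, but you should correct it before invoking Section~\ref{Computationofthe}.
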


\begin{proof}
As $\mathfrak{A}_5$ in fact belongs to ${\mathrm{SL}}_2 (\F_p )/\{ \pm 1\}$, the image under the determinant of the full 
group $\Gamma_{\mathfrak{A}_5} (p)$ 
consists in all the squares of $\F_p^*$. The quotient of the set of vertical Igusa components, indexed by $\F_p^* 
\times \PPP^1 (\F_p )$, therefore has twice the number of elements as indicated in the list above, depending on the class
of $p$ {\rm mod} $60$.  

Igusa components associated with orbits of size $n=60, 30, 20$ or $12$, have stabilizer $C_{(120/n)}$ in 
$\Gamma_{\mathfrak{A}_5} (p)$ so they are isomorphic to $\overline{{\cal M}} ({\cal P}, {\mathrm{Ig}} (p)/
C_{(120/n)} )_{\overline{\F}_p }$. The rest goes as in the proof of the previous theorems. $\Box$

\end{proof}

\bigskip 

As for Drinfeld components: using Theorem~6.1 of~\cite{Ch99}, we can take as a system of 
generators for the image of ${\mathfrak{A}_5}$ in ${\mathrm{SL}}_2 (\overline{\Z} )/\{ \pm 1\}$ any set  $(S,T)$ whose traces satisfy
$$
t_S^2 +t_T^2 +t_{ST}^2 -t_S t_T t_{ST} \in \{ 2+\mu , 3, 2-\mu^{-1} \} {\hspace{0.2cm}}  {\mathrm{and}}  {\hspace{0.2cm}}  {t_S},  {t_T},  {t_{ST}}
\in \{ 0, \pm \mu , \pm 1, \pm \mu^{-1} \}, {\hspace{0.2cm}}  {\mathrm{for}}  {\hspace{0.2cm}} \mu = \frac{1+\sqrt{5}}{2} .
$$
{\bf Taking $p=421$}, we can choose $S$ and $T$ as the reduction of the generators displayed in the introduction to~\cite{Ch99}, 
that is
$$
S=
\left( 
\begin{array}{cc}
211 & 196 \\
316 & 100
 \end{array}
\right) {\hspace{0.5cm}}  {\mathrm{and}}  {\hspace{0.5cm}}
T=
\left( \begin{array}{cc}
100  & 70 \\
306 & 210 
 \end{array}
\right) 
$$  
in ${\mathrm{SL}}_2 (\F_{421} )/\{ \pm 1\}$. Drawing the graph of the homographic action of $S$ and $T$ on 
the elements $0$ and $1$ in $\PPP^1 (\F_{421})$ yields the respective orbits

\medskip

\noindent $A:= {\mathfrak{A}_5} \cdot 0 = \{$0, 2, 3, 14, 17, 20, 29, 50, 51, 55, 72, 83, 94, 101, 146, 152, 153, 156, 163, 166, 177, 182, 
190, 191, 192, 203, 206, 209, 210,   211, 212, 215, 218, 220, 222, 225,  230, 234, 236, 242, 250, 257, 264, 266, 279,  284, 293,   319, 326, 335,  343, 
352,    355,     357, 359,     392,   396, 418, 419, $\infty \}$

\medskip

\begin{center}
and 
\end{center}

\medskip

\noindent
$B:= {\mathfrak{A}_5} \cdot 1 =\{$1, 5, 23, 25, 26, 27, 35, 40, 60, 61, 81, 92, 93, 105, 107, 115, 127, 128, 137, 143, 154, 159, 
160, 164, 172, 173, 189, 193, 195, 202, 223, 227, 233, 235, 243, 246, 252, 256, 259,  273, 274, 289, 294, 306, 323, 324, 325, 327, 348, 350, 361, 363, 370, 373, 
374, 379, 382, 388, 389, 409$\}$. 

\medskip

Setting $\phi (t) =\prod_{a\in A\setminus \{ \infty \}} (t-a )^2 \prod_{a\in B} (t-b )^{-1}$ one computes
$$
\phi (\PPP^1 (\F_{421} ))=\{ 0, 23, 47, 144, 161, 228, 292, 317, \infty  \} 
$$
with ramification indices: $1, 2, 1, 3, 1, 1, 1, 5$ and $1$ respectively. The list of their inverse {\rm mod} $211$ is $1, 106, 1,  141,  1, 1, 1,  169$ and $1$,
so that an equation as in~(\ref{equationD/H}) for the generic ${\mathfrak{A}_5}$-Drinfeld component in level $421$ is finally
$$
u^{211} =t (t-23)^{106} (t-47) (t-144)^{141} (t-161) (t-228) (t-292) (t-317)^{169}  .
$$

 {\footnotesize

}

\bigskip

\noindent
Bas Edixhoven\\
Mathematisch Instituut, Universiteit Leiden, Niels Bohrweg 1, 2333 CA Leiden, Nederland\\
edix@math.leidenuniv.nl

\bigskip

\noindent 
Pierre Parent\\
Univ. Bordeaux, CNRS, Bordeaux INP, IMB, UMR 5251,  F-33400, Talence, France\\
Pierre.Parent@math.u-bordeaux.fr

\end{document}